\newcommand{\ts}{\textstyle}
\newcommand{\trC}{\tr\lower2pt\hbox{$_\bC$}}
\newcommand{\qart}{1/4}
\newcommand{\7}{}  %(\bR\R^7)^* after Lemma 2.10
\newtheorem{theorem}{Theorem}[section]
\newtheorem{corollary}[theorem]{Corollary}
\newtheorem{lemma}[theorem]{Lemma}
\newtheorem{proposition}[theorem]{Proposition}
\theoremstyle{definition}
\newtheorem{definition}[theorem]{Definition}
\newtheorem{example}[theorem]{Example}
\theoremstyle{remark}
\newtheorem{remark}[theorem]{Remark}
\newcommand{\cL}{{\mathcal L}}
\newcommand{\hook}{\lrcorner \,}
\newcommand{\re}{\mathrm{Re}}
\newcommand{\G}{\mathrm{G}}
\newcommand{\bR}{{\mathbb R}}
\newcommand{\bN}{{\mathbb N}}
\newcommand{\bC}{{\mathbb C}}
\newcommand{\tr}{\mathrm{tr}}
\newcommand{\SU}{{\mathrm{SU}}}
\newcommand{\diag}{{\mathrm{diag}}}
\newcommand{\mfg}{\mathfrak{g}}
\newcommand{\mfh}{\mathfrak{h}}
\newcommand{\mfb}{\mathfrak{b}}
\newcommand{\mfa}{\mathfrak{a}}
\newcommand{\mfk}{\mathfrak{k}}
\newcommand{\mfn}{\mathfrak{n}}
\newcommand{\ad}{\operatorname{ad}}
\newcommand{\Ann}{\operatorname{Ann}}
\newcommand{\End}{\operatorname{End}}
\newcommand{\spa}[1]{\operatorname{span}(#1)}
\newcommand{\GL}{\operatorname{GL}}
\newcommand{\SL}{\operatorname{SL}}
\numberwithin{equation}{section}
\begin{document}

\title[Closed $\G_2$-eigenforms and exact $\G_2$-structures]
{\Large Closed $\G_2$-eigenforms and exact $\G_2$-structures}
\author{Marco Freibert}
\address{Mathematisches Seminar\\
	Christian-Albrechts-Universit\"at zu Kiel\\
	Ludewig-Meyn-Strasse 4\\
	D-24098 Kiel\\
	Germany}
\email{freibert@math.uni-kiel.de}
\author{Simon Salamon}
\address{Mathematics Department \\
	King's College London\\
	Strand \\
	London \\
	WC2R 2LS \\
	United Kingdom}
\email{simon.salamon@kcl.ac.uk}

\date{}

%\subjclass[2010]{53D35, 53D20, 55N25}

\begin{abstract}
  A study is made of left-invariant $\G_2$-structures with an exact
  3-form on a Lie group $G$ whose Lie algebra $\mfg$ admits a codimension-one
  nilpotent ideal $\mfh$. It is shown that such a Lie group $G$ cannot
  admit a left-invariant closed $\G_2$-eigenform for the Laplacian
  and that any compact solvmanifold $\Gamma\backslash G$ arising from $G$ does not admit an (invariant) exact $\G_2$-structure. We also classify the seven-dimensional Lie algebras $\mfg$ with codimension-one ideal equal to the complex Heisenberg Lie algebra which admit exact $\G_2$-structures with or without special torsion. To achieve these goals, we first determine the six-dimensional nilpotent Lie algebras $\mfh$ admitting an exact $\SL(3,\bC)$-structure $\rho$ or a half-flat $\SU(3)$-structure $(\omega,\rho)$ with exact $\rho$, respectively.
\end{abstract}

\maketitle

\vskip-20pt

\section{Introduction}
The group $\G_2$ is one of the exceptional cases in Berger's celebrated list \cite{Be} of restricted holonomy groups of non-locally symmetric irreducible Riemannian manifolds and only occurs in dimension seven. For over $30$ years, it was unknown whether such manifolds exist at all until Bryant found local examples \cite{Br2}, Bryant and the second author found complete ones \cite{BrSa}, and Joyce \cite{J} constructed compact manifolds with $\G_2$ holonomy.

The construction of these examples relies on the fact that the metric is encoded in a certain type of three-form, which we shall refer to as a $\G_2$-structure. More exactly, a \emph{$G_2$-structure} on a seven-dimensional manifold $M$ is a three-form $\varphi\in \Omega^3 M$ on $M$ with pointwise stabilizer conjugate to $\G_2\subseteq \mathrm{SO}(7)\subseteq \GL(7,\bR)$. The form $\varphi$ induces a Riemannian metric $g_{\varphi}$, an orientation and a Hodge star operator $\star_{\varphi}$ on $M$. The holonomy group of $g_{\varphi}$ is contained in $\G_2$ if the structure is torsion-free, meaning that $\varphi$ is parallel for the Levi-Civita connection, which is the case if and only if $\varphi$ is closed and coclosed \cite{FG}.

$\G_2$-structures that are closed but \emph{not} coclosed constitute a basic intrinsic torsion class in the Fern\'{a}ndez-Gray classification, and play a natural role in the construction of compact manifolds with holonomy equal to $\G_2$. Joyce's examples were found by first constructing closed $\G_2$-structures on smooth manifolds with sufficiently small intrinsic torsion and then proving analytically that such closed $\G_2$-structures may be deformed to torsion-free ones.

Closed $\G_2$-structure are the initial values for the \emph{Laplacian flow} 
$\dot{\varphi}_t=\Delta_{\varphi_t}\varphi_t$
for one-parameter families of closed $\G_2$-structures $(\varphi_t)_{t\in I}$ introduced by Bryant in \cite{Br3}. The critical points of this flow are precisely the torsion-free $\G_2$-structures \cite{LW1}, and the hope is to use the Laplacian flow to deform a closed $\G_2$-structure (without any smallness assumption on the intrinsic torsion) to a torsion-free one for $t\rightarrow \infty$. 

Short-time existence and uniqueness of the Laplacian flow were established in \cite{BrXu}, and other foundational properties were proven in series of papers by Lotay and Wei \cite{LW1,LW2,LW3}. However, a lot is still unknown about long-time behaviour of the flow, and it is important to characterise finite-time singularities. One expects that, like for the Ricci flow, these singularities are modeled by self-similar solutions of the Laplacian flow.
The initial values $\varphi_0$ of these self-similar solutions are called \emph{Laplacian solitons}, and a special class of them is given by \emph{closed $\G_2$-eigenforms} characterised by
\begin{equation*}
\Delta_{\varphi_0}\varphi_0= \mu \varphi_0
\end{equation*}
for some $\mu\in \bR\setminus \{0\}$. Although this equation looks quite easy, no examples of these structures are known. Moreover, compact manifolds cannot admit a closed $\G_2$-eigenform \cite{LW1}.

Closed $\G_2$-eigenforms are also of interest from another point of view: they constitute a special class of so-called \emph{$\lambda$-quadratic closed $\G_2$-structures}, $\lambda\in \bR$, namely those with $\lambda=0$. In general, quadratic closed $\G_2$-structures are exactly the closed $\G_2$-structures for which the exterior derivative $d\tau$ of the associated torsion two-form $\tau$ depends quadratically on $\tau$. These structures have been studied by Ball \cite{Ba1,Ba2} and include many other interesting closed $\G_2$-structures. For example, the case $\lambda=\tfrac{1}{6}$ corresponds to so-called \emph{extremally Ricci-pinched (ERP)} closed $\G_2$-structures, and the case $\lambda=\tfrac{1}{2}$ is equivalent to the induced metric being Einstein.

By Lauret's work \cite{L}, homogeneous $\lambda$-quadratically closed $\G_2$-structures on homogeneous manifolds can only exist for $\lambda\in \{0,\tfrac{1}{6},\tfrac{1}{2}\}$. Homogeneous ERP closed $\G_2$-structures were classified in \cite{Ba2} using the classification of left-invariant such structures on Lie groups in \cite{LN2}. Moreover, \cite{FFM} shows that no solvable Lie group can admit a left-invariant closed Einstein $\G_2$-structure. Since the \emph{strong Alekseevsky conjecture} is true in dimension seven (i.e.\ any simply-connected homogeneous 7-Einstein manifold of negative scalar curvature is isometric to a left-invariant metric on a simply-connected solvable Lie group) \cite{AL}, there are no closed homogeneous Einstein $\G_2$-structures. So the homogeneous case is settled for $\lambda\in \{\tfrac{1}{6},\tfrac{1}{2}\}$ leaving open the case $\lambda=0$, where nothing at all is known.

We shall fill this gap as follows. Let $G$ be a 7-dimensional
  Lie group with Lie algebra $\mfg$. We prove that $G$ cannot admit a
  left-invariant closed $\G_2$-eigenform if $\mfg$ is \emph{almost
    nilpotent}, i.e.\ it admits a codimension-one nilpotent ideal. We
  are led to focus on ideals of two types, $\mfn_9$ and $\mfn_{28}$,
  with the former of step 4, and the latter of step 2 and isomorphic
  to the real Lie algebra underlying the complex Heisenberg group. It
  is striking that our non-existence proof is at the limit of, but just
  within, the realm of computations that can be checked by hand. This
  fact has enabled us to complement our conclusions with more positive
  ones relating to $\mfn_{28}$, mentioned below.

We are naturally led to the class of almost nilpotent Lie algebras by
the following facts:

Lauret and Nicolini \cite{LN1} showed that any Lie algebra $\mfg$ that possesses a closed $\G_2$-structure has a codimension-one unimodular ideal $\mfh$. Hence, it is quite natural to start with those for which $\mfh$ is nilpotent. 

Motivation is also provided by the results of Podest\`a and Raffero \cite{PR} on closed $\G_2$ structures on seven-manifolds with a transtive reductive group of symmetries.

Moreover, a closed $\G_2$-eigenform $\varphi$ is always (cohomologically) exact, and the existence problem of exact $\G_2$-structures on a restricted class of almost nilpotent Lie algebras has been studied in \cite{FFR}: there are no exact $\G_2$-structures on strongly unimodular Lie algebras $\mfg$ with $b_2(\mfg)=b_3(\mfg)=0$. The latter implies that $\mfg$ is almost nilpotent \cite{MaSw}, whereas `strongly unimodular' is a technical condition, necessary for the existence of a cocompact lattice in the associated simply-connected Lie group $G$.

We also answer negatively the existence problem for exact
$\G_2$-structures on strongly unimodular almost nilpotent Lie
algebras, and so on compact almost nilpotent (completely solvable)
solvmanifolds, thereby generalising the result of \cite{FFR}.
It is not known if there exists \emph{any} compact manifold
  with an exact $\G_2$-structure, though it \emph{is} known that (in
  contrast to other situations) nilmanifolds cannot serve as
  examples.

We do succeed in classifying all almost nilpotent Lie algebras admitting an exact $\G_2$-structure for which the codimension-one nilpotent ideal is isomorphic to $\mfn_{28}$. For such almost nilpotent Lie algebras, we also classify those that admit exact $\G_2$-structures with special torsion of positive or negative type, a notion introduced by Ball in \cite{Ba2}.

To prove our results, we split our almost nilpotent Lie algebra $\mfg$ as a vector space into $\mfg=\mfh\oplus \bR e_7$ with $\mfh$ being the codimension-one nilpotent ideal and $e_7\in \mfh^{\perp}$ of norm one. Then the equations determining a closed $\G_2$-eigenform or an exact $\G_2$-structure can be encoded into conditions on the induced $\SU(3)$-structure $(\omega,\rho)$ on $\mfh$. In particular, for an exact $\G_2$-structure, the $\SL(3,\bC)$-structure $\rho$ has to be exact and for a closed $\G_2$-eigenform, 
$(\omega,\rho)$ has to be half-flat with $\rho$ being the exterior derivative of a primitive $(1,1)$-form $\nu$. The extra equation $\nu\wedge\omega^2=0$ turns out to be of crucial importance in enabling us to rule out solutions to the eigenform equations.

We show that exactly five out of $34$ six-dimensional nilpotent Lie algebras admit an exact $\SL(3,\bC)$-structure and that exactly two of them admit a half-flat $\SU(3)$-structure $(\omega,\rho)$ with $\rho$ exact, namely $\mfn_9$ and $\mfn_{28}$. Both results have independent interest because special kinds of closed and exact $\SL(3,\bC)$-structures on six-dimensional nilpotent Lie algebras have been studied in \cite{FS}, and the six-dimensional nilpotent Lie algebras admitting a half-flat $\SU(3)$-structure $(\omega,\rho)$ with $d\omega=\rho$ were determined in \cite{FR}. Moreover, the result on exact $\SL(3,\bC)$-structures implies that if an almost nilpotent Lie algebra $\mfg$ admits an exact $\G_2$-structure, then the codimension-one nilpotent ideal $\mfh$ has to be one of the five Lie algebras. We provide examples of exact $\G_2$-structures on almost nilpotent Lie algebras with codimension-one nilpotent ideal $\mfh$ for all possible nilpotent Lie algebras $\mfh$ except when $\mfh$ equals the nilpotent Lie algebra called $\mfn_4$.

This leaves open the question to be studied in future work: Is there an almost nilpotent Lie algebra with codimension-one nilpotent ideal isomorphic to $\mfn_4$ that admits an exact $\G_2$-structure?
\clearpage
The paper is organised as follows.

In Section \ref{sec:preliminaries}, we summarise basic facts about $\SL(3,\bC)$-, $\SU(3)$- and $\G_2$-structures that are relevant to our investigation. In Section \ref{sec:reductionto6d}, we show how one can reduce the existence problem of a closed $\G_2$-eigenform or an exact $\G_2$-structure on a seven-dimensional Lie algebra $\mfg$ to the existence problem of $\SL(3,\bC)$- or $\SU(3)$-structures satisfying certain equations on a six-dimensional ideal $\mfh$ in $\mfg$. Next, in Section \ref{sec:resultsin6d}, we prove our results on exact $\SL(3,\bC)$-structures and on half-flat $\SU(3)$-structures $(\omega,\rho)$ with exact $\rho$. We use these results to prove in Section \ref{sec:exactG2} that no strongly unimodular almost nilpotent Lie algebra, and so also no compact almost nilpotent (completely solvable) solvmanifold, can admit an exact $\G_2$-structure. Finally, in Section \ref{sec:closedG2eigenforms}, we carried out a detailed analysis of the respective cases $\mfn_9$ and $\mfn_{28}$ in order to show that no almost nilpotent Lie algebra can admit a closed $\G_2$-eigenform. Moreover, we prove the mentioned classification results of almost nilpotent Lie algebras with a codimension ideal isomorphic to $\mfn_{28}$ admitting exact $\G_2$-structures.
\section{Preliminaries}\label{sec:preliminaries}
\subsection{$G$-structures in six and seven-dimensions}
In this subsection, we define three different types of $G$-structures in six and seven dimensions, and recall some of their basic properties. Proofs of the relevant facts and more information may be found, for example, in \cite{Br2,Br3,Hi}.

In all cases, the $G$-structure is defined by one or two differential forms which are pointwise isomorphic to one or two `model' forms on $\bR^n$, $n=6$ or $n=7$, whose $\GL(n,\bR)$-stabiliser is $G$. Here, \emph{pointwise isomorphic} means that for each $p\in M$ there is a vector space isomorphism $u:T_p M\rightarrow \bR^n$ that identifies the differential forms at the point $p\in M$ with the model forms on $\bR^n$.
\begin{definition}$\hphantom.$\\[-10pt]
\begin{itemize}
\item
An \emph{$\SL(3,\bC)$-structure} on an oriented six-dimensional manifold is a three-form $\rho\in \Omega^3 M$ which is pointwise isomorphic to
\begin{equation*}
\rho_0:=e^{135}-e^{146}-e^{236}-e^{245}\in \Lambda^3 (\bR^6)^*.
\end{equation*}
\item
An \emph{$\SU(3)$-structure} on a six-dimensional manifold is a pair $(\omega,\rho)$ of a two-form $\omega\in \Omega^2 M$ and a three-form $\rho\in \Omega^3 M$ which is pointwise isomorphic to $(\omega_0,\rho_0)$ with
\begin{equation*}
\omega_0:=e^{12}+e^{34}+e^{56}\in \Lambda^2 (\bR^6)^*.
\end{equation*}
\item
A \emph{$\G_2$-structure} on a seven-dimensional manifold $M$ is a three-form $\varphi\in \Omega^3 M$ which is pointwise isomorphic to
\begin{equation*}
\varphi_0:=\omega_0\wedge e^7+\rho_0\in \Lambda^3 (\bR^7)^*. 
\end{equation*}
\end{itemize}
In all three cases, if $u:T_p M\rightarrow \bR^n$ is one of the pointwise isomorphisms, then the basis $(u^{-1}(e_1),\ldots,u^{-1}(e_n))$ of $T_p M$ is called an \emph{adapted} basis for the $G$-structure in question. Sometimes, we will also call the dual basis of $(u^{-1}(e_1),\ldots,u^{-1}(e_n))$ an \emph{adapted} basis for the $G$-structure in question.
\end{definition}
Since $\SL(3,\bC)\subseteq \GL(3,\bC)$ and $\GL(3,\bC)$-structures are almost complex structures, an $\SL(3,\bC)$-structure $\rho$ has to induce an almost complex structure $J_{\rho}$. Explicitly, $J_{\rho}$ is obtained as follows:
\begin{definition}
Let $\rho$ be an $\SL(3,\bC)$-structure on an oriented six-dimensional manifold $M$. Then $\rho$ induces an almost complex structure $J=J_{\rho}$ on $M$ defined in $p\in M$ to be the unique endomorphism $J_p$ of $T_p M$ satisfying
\begin{equation*}
J_p f_{2i-1}=-f_{2i},\qquad J_p f_{2i}=f_{2i-1}.
\end{equation*}
for one, and so any, adapted oriented basis $(f_1,\ldots,f_6)$ of $T_p M$.

Moreover, set $\hat{\rho}:=J^*\rho\in \Omega^3 M$. Then
\begin{equation*}
\hat{\rho}_p=f^{246}-f^{235}-f^{145}-f^{136},
\end{equation*}
where $(f^1,\ldots,f^6)$ is the dual basis of the adapted basis $(f_1,\ldots,f_6)$ at $p\in M$. Furthermore, $\Psi:=\rho+i\hat{\rho}\in \Omega^3(M,\bC)$ is a non-zero $(3,0)$-form.
\end{definition}
We give an equivalent characterisation of an $\SL(3,\bC)$-structure and for this have to introduce a quartic invariant $\lambda$ of a three-form on a vector space:
\begin{definition}
Let $V$ be a six-dimensional vector space. Let $\kappa:\Lambda^5 V^*\rightarrow V\otimes \Lambda^6 V^*$ be the natural $\GL(V)$-equivariant isomorphism, i.e.\ $\kappa^{-1}(v\otimes \nu)=v\hook \nu$. Next, let $\rho\in \Lambda^3 V^*$ and define $K_{\rho}\in \End(V)\otimes \Lambda^6 V^*$ by
\begin{equation*}
K_{\rho}(v)=\kappa((v\hook \rho)\wedge \rho)
\end{equation*}
and finally set
\begin{equation*}
\lambda(\rho):=\tfrac{1}{6}\tr\left(K_{\rho}^2\right)\in (\Lambda^6 V^*)^{\otimes 2}.
\end{equation*}
\end{definition}
It makes sense to say that $\lambda(\rho)>0$, meaning that $\lambda(\rho)$ is the square of some element in $\Lambda^6 V^*$. Thus, one may also speak of $\lambda(\rho)<0$. Using this notation, Hitchin \cite{Hi} gives the following characterisation of an $\SL(3,\bC)$-structure:
\begin{lemma}\label{le:quarticinvariant}
Let $\rho\in \Omega^3 M$ be a three-form on an oriented six-dimensional manifold $M$. Then $\rho$ is an $\SL(3,\bC)$-structure if and only if $\lambda(\rho_p)<0$ for all $p\in M$.
\end{lemma}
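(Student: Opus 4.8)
Both conditions in the statement are pointwise, so the plan is to fix $p\in M$, set $V:=T_pM$ and $\rho:=\rho_p\in\Lambda^3V^*$, and reduce the lemma to the purely linear-algebraic assertion that $\rho$ lies in the $\GL(V)$-orbit of $\rho_0$ if and only if $\lambda(\rho)<0$. Following Hitchin's treatment of stable forms, the whole argument rests on the single algebraic identity
\[
K_\rho\circ K_\rho=\lambda(\rho)\,\id_V\qquad\text{in }\ \End(V)\otimes(\Lambda^6V^*)^{\otimes2},
\]
which is consistent with the definition $\lambda(\rho)=\tfrac16\tr(K_\rho^2)$ because the trace of the right-hand side is $6\,\lambda(\rho)$.

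First I would record the $\GL(V)$-equivariance of the construction. Since $\kappa$, the interior product and the wedge product are all $\GL(V)$-natural, the assignment $\rho\mapsto K_\rho$ is equivariant, and consequently $\lambda(g\cdot\rho)=(\det g)^{-2}\lambda(\rho)$ for every $g\in\GL(V)$, where $g$ acts on forms by pull-back under $g^{-1}$. As $(\det g)^{-2}>0$ and squares in $\Lambda^6V^*$ are carried to squares, the property that $\lambda(\rho)$ equals, or equals minus, a nonzero square of an element of $\Lambda^6V^*$ is a $\GL(V)$-invariant; in particular each of the conditions $\lambda(\rho)<0$, $\lambda(\rho)=0$, $\lambda(\rho)>0$ is constant along orbits.

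To establish the displayed identity I would note that both sides are $\GL(V)$-equivariant polynomial maps of $\rho$, so it suffices to check the equality on representatives of the open orbits, the locus $\{\lambda\ne0\}$ being the complement of a proper subvariety and hence dense. A direct computation at $\rho_0$ shows that $K_{\rho_0}$ is a (volume-valued) multiple of the standard complex structure $J_0$, whence $K_{\rho_0}^2=\lambda(\rho_0)\,\id_V$ with $\lambda(\rho_0)<0$; checking the split representative $e^{123}+e^{456}$, for which $\lambda>0$, completes the verification. This already settles the forward direction: if $\rho$ is an $\SL(3,\bC)$-structure it lies in the orbit of $\rho_0$, so $\lambda(\rho)<0$ by the equivariance above.

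For the converse, suppose $\lambda(\rho)<0$. Fixing a volume form $\nu$, I may write $K_\rho=A\otimes\nu$ with $A\in\End(V)$ and $\lambda(\rho)=-a^2\,\nu\otimes\nu$ for some $a>0$; the identity then forces $A^2=-a^2\,\id_V$, so $J_\rho:=a^{-1}A$ satisfies $J_\rho^2=-\id_V$ and equips $V$ with the structure of a three-dimensional complex vector space. The crucial remaining point is to show that $\Psi:=\rho+i\,\hat\rho$, with $\hat\rho:=J_\rho^*\rho$, is a nonzero form of type $(3,0)$ for $J_\rho$; this expresses a compatibility between $\rho$ and $J_\rho$ that I would extract from the same identity together with the defining formula $K_\rho(v)=\kappa((v\hook\rho)\wedge\rho)$. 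Granting it, $\Lambda^{3,0}V^*$ is one-dimensional over $\bC$, so $\Psi=\zeta^1\wedge\zeta^2\wedge\zeta^3$ for a suitable basis $(\zeta^1,\zeta^2,\zeta^3)$ of the $(1,0)$-forms; writing $\zeta^j=e^{2j-1}+i\,e^{2j}$ and taking real parts gives $\rho=\re\Psi=\rho_0$ in the real basis $(e^1,\dots,e^6)$, which is therefore adapted, so $\rho$ is an $\SL(3,\bC)$-structure. The main obstacle I anticipate is precisely this last verification that $\Psi$ has pure type $(3,0)$ (equivalently, that $\lambda(\rho)<0$ rather than $>0$ corresponds to $J_\rho^2=-\id_V$): it is the only step going beyond formal naturality and is what separates the complex open orbit, with stabiliser $\SL(3,\bC)$, from the split one, with stabiliser $\SL(3,\bR)\times\SL(3,\bR)$. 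The concluding normal-form reduction is then routine linear algebra over $\bC$.
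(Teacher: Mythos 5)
The paper offers no proof of this lemma at all --- it is quoted verbatim from Hitchin \cite{Hi} --- so there is no in-paper argument to compare against; what you have written is a reconstruction of Hitchin's proof, and its skeleton is correct. The equivariance of $\rho\mapsto K_\rho$, the reduction of the identity $K_\rho^2=\lambda(\rho)\,\id_V$ to a check at one point of an open orbit (indeed a polynomial identity valid on a nonempty Euclidean-open subset of $\Lambda^3V^*$ holds everywhere, so the split representative $e^{123}+e^{456}$ is not even needed for this step), the forward implication via $K_{\rho_0}$ being a volume-valued multiple of $J_0$, and the extraction of an almost complex structure $J_\rho$ from $A^2=-a^2\id_V$ when $\lambda(\rho)<0$ are all sound.

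The gap is the one you flag yourself and then ``grant'': the claim that $\Psi=\rho+i\,J_\rho^*\rho$ has pure type $(3,0)$ is not a routine consequence of what precedes it --- it \emph{is} the content of the backward implication. A priori $\rho$ could have a nonzero component in $[[\Lambda^{2,1}V^*]]$ with respect to $J_\rho$, and then $\rho$ would not be the real part of a decomposable complex $3$-form, hence not in the orbit of $\rho_0$; nothing in your argument excludes this. The missing lemma, which is where Hitchin does the actual work, is that the trilinear form $(u,v,w)\mapsto\rho(J_\rho u,v,w)$ is totally skew (equivalently, that $K_\rho$ acts on $\rho$ as a complex structure does on a $(3,0)$-form, killing the $(2,1)+(1,2)$ part); this has to be deduced from the defining relation $K_\rho(v)\hook\nu=(v\hook\rho)\wedge\rho$ together with $K_\rho^2=\lambda(\rho)\,\id_V$ and $\tr(K_\rho)=0$, and it is precisely this computation that separates $\lambda<0$ (stabiliser $\SL(3,\bC)$) from $\lambda>0$ (stabiliser $\SL(3,\bR)\times\SL(3,\bR)$). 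Once that is in place, the final normal-form step over $\bC$ is indeed routine. As it stands, the proposal is a correct outline with the decisive step of the converse left unproved.
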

We will also need the following technical statements in the sequel:
\begin{lemma}\label{le:SL3Clinearlydependent}
Let $\rho\in \Omega^3 M$ be an $\SL(3,\bC)$-structure on a seven-dimensional manifold. Let $p\in M$ and $v,w\in T_p M$ and set $J:=J_{\rho}$. Then:
\begin{enumerate}
	\item[(a)]
$\rho_p(v,w,\cdot)=0$ if and only if $v$, $w$ are $\bC$-linearly dependent.
 \item[(b)]
If $v\neq 0$, then the two-forms $\omega_1:=\rho_p(v,\cdot,\cdot)\in \Lambda^2 T_p^* M$ and $\omega_2:=\rho_p(J_pv,\cdot,\cdot)\in \Lambda^2 T_p^* M$ satisfy
\begin{equation*}
\ker(\omega_i)=\spa{v,J_pv},\qquad \omega_i\wedge \omega_j=\delta_{ij} \omega_1^2
\end{equation*}
for all $i,j=1,2$.
\end{enumerate}
\end{lemma}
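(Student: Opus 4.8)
The plan is to transport both statements to the flat model and then exploit symmetry. Using a pointwise isomorphism $u\colon T_pM\to\bR^6$ that identifies $\rho_p$ with $\rho_0$, and hence $J$ with the standard $J_0=J_{\rho_0}$, it suffices to prove (a) and (b) for $\rho_0$ on $\bR^6$, with $\bC$-linear combinations taken with respect to $J_0$ (so that $J_0f_1=-f_2$ and the complex line through $f_1$ is $\spa{f_1,f_2}$). The engine of the argument is the stabiliser $\SL(3,\bC)=\{g\in\GL(6,\bR):g^*\rho_0=\rho_0\}$, which lies in $\GL(3,\bC)=\Aut(J_0)$ and therefore preserves $J_0$; it acts transitively on nonzero vectors of $(\bR^6,J_0)\cong\bC^3$, and since $g^*$ is an algebra homomorphism on forms, any contraction identity verified at a single normalised vector propagates to all nonzero vectors.

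The key observation is that part (a) is literally the first assertion of part (b). Indeed, for $v\neq0$ one has $\rho_p(v,w,\cdot)=(v\hook\rho_p)(w,\cdot)$, so $\rho_p(v,w,\cdot)=0$ if and only if $w\in\ker(v\hook\rho_p)$, while $v,w$ being $\bC$-linearly dependent means exactly $w\in\operatorname{span}_{\bC}\{v\}=\spa{v,Jv}$; thus (a) is the statement $\ker(v\hook\rho_p)=\spa{v,Jv}$, and the case $v=0$ is trivial. I would reduce to $v=f_1$ by transitivity and compute the single contraction $f_1\hook\rho_0=f^{35}-f^{46}$, whose kernel is visibly $\spa{f_1,f_2}=\spa{v,Jv}$. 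This proves (a) and the identity $\ker\omega_1=\spa{v,Jv}$ simultaneously; the companion identity $\ker\omega_2=\spa{v,Jv}$ then follows because $\operatorname{span}_{\bC}\{Jv\}=\operatorname{span}_{\bC}\{v\}$.

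For the quadratic relations $\omega_i\wedge\omega_j=\delta_{ij}\omega_1^2$, I again normalise $v=f_1$, so $Jv=-f_2$, and compute $\omega_1=f_1\hook\rho_0=f^{35}-f^{46}$ and $\omega_2=Jv\hook\rho_0=-\,f_2\hook\rho_0=f^{36}+f^{45}$. A direct expansion then gives $\omega_1^2=\omega_2^2=2\,f^{3456}$ and $\omega_1\wedge\omega_2=0$, which is the full set of relations. Because $g^*$ is multiplicative on the exterior algebra and fixes $\rho_0$, pulling back transports these three identities from $f_1$ to an arbitrary nonzero $v$, completing (b).

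The step requiring most care is the symmetry reduction rather than any computation. One must check that the $\GL(3,\bC)$ normal form $v\mapsto f_1$ can be realised inside $\SL(3,\bC)=\Aut(\rho_0)$: this is handled by completing $v$ to a complex basis and correcting the determinant by rescaling a complementary basis vector, which leaves $v$ fixed; and one must note $\Aut(\rho_0)\subseteq\Aut(J_0)$, so that $\bC$-linear dependence is a genuinely $\Aut(\rho_0)$-invariant notion and may be tested after normalisation. Everything downstream is a one-line interior product in the fixed basis, so I anticipate no analytic difficulty: the lemma is entirely pointwise and linear-algebraic.
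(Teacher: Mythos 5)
Your proof is correct, and for part (b) it is essentially the paper's own argument: normalise $v$ by the transitivity of the stabiliser of $\rho_0$ and compute the two contractions $f^{35}-f^{46}$ and $\pm(f^{36}+f^{45})$ explicitly (the paper normalises $v$ to unit length and uses transitivity of $\SU(3)$ on $S^6$; your use of $\SL(3,\bC)$ acting transitively on $\bC^3\setminus\{0\}$ is a little cleaner, since it avoids invoking a norm that the $\SL(3,\bC)$-structure does not canonically provide). Where you genuinely diverge is part (a): the paper proves it directly from properties of the $(3,0)$-form $\Psi=\rho+i\hat{\rho}$ --- showing $\Psi(v,Jv,\cdot)=0$ by $J$-invariance for the ``dependent'' direction, and, for the ``independent'' direction, extending $v,w$ to a complex basis so that $\Psi(v_{\bC},w_{\bC},u_{\bC})=8\Psi(v,w,u)\neq 0$, whence $\rho(v,w,u)\neq 0$ or $\rho(v,w,Ju)\neq 0$. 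You instead observe that (a) for $v\neq 0$ is literally the statement $\ker(v\hook\rho_p)=\spa{v,J_pv}$, i.e.\ the first assertion of (b), and dispose of $v=0$ trivially; this folds the two parts into a single normal-form computation. Your route buys economy and makes the logical dependence (a)$\subseteq$(b) explicit; the paper's route via $\Psi$ is basis-free and generalises to statements about $(3,0)$-forms that are not captured by a single orbit computation. The only points needing care in your version --- that the normalisation $v\mapsto f_1$ can be realised inside the stabiliser of $\rho_0$, and that this stabiliser (restricted to $\SL(3,\bC)\subseteq\GL(3,\bC)$) preserves $J_0$ so that $\bC$-linear dependence is invariant --- are both addressed, and the sign discrepancy in $\omega_2$ relative to the paper (stemming from the paper's own inconsistency between $J_pf_1=-f_2$ in the definition and $f_2=J_pf_1$ in its proof) is immaterial to all the stated identities.
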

\begin{proof}
\begin{enumerate}
	\item[(a)]
First, let $v,w$ are $\bC$-linearly dependent. Without loss of generality, we may assume that $w=c Jv$
for some $c\in \bR$. Since $\Psi$ is a $(3,0)$-form, we do have $\Psi(v,Jv,u)=-\Psi(Jv,J^2v,u)=\Psi(Jv,v,u)=-\Psi(v,Jv,u)$, i.e. $\Psi(v,Jv,u)=0$
for any $u\in T_p M$. As $\rho=\re(\Psi)$ this implies $\rho(v,w,\cdot)=c\rho(v,Jv,\cdot)=0$.

Next, assume that $v$ and $w$ are $\bC$-linearly independent. Then we may extend $v$ and $w$ to a $\bC$-basis of $(T_p M,J)$ by an element $u\in T_p M$. Then $v_{\bC}:=v-i Jv$, $w_{\bC}:=w-iJw$ and $u_{\bC}:=u-iJu$ form a basis of $(T_p M)^{1,0}$ and so $0\neq \Psi(v_{\bC},w_{\bC},u_{\bC})$ since $\Psi$ is a $(3,0)$-form. This property of $\Psi$ also shows
\begin{equation*}
\Psi(z-iJz,\cdot,\cdot)=\Psi(z,\cdot,\cdot)-i\Psi(Jz,\cdot,\cdot)=\Psi(z,\cdot,\cdot)-i^2 \Psi(z,\cdot,\cdot)=2\Psi(z,\cdot,\cdot)
\end{equation*}
for any $z\in T_p M$ and so $\Psi(v_{\bC},w_{\bC},u_{\bC})=8 \Psi(v,w,u)$. Consequently,
$\Psi(v,w,u)\neq 0$. Thus, $\rho(v,w,u)\neq 0$ or $\hat{\rho}(v,w,u)\neq 0$. In the latter case, we
do have
\begin{equation*}
\rho(v,w,Ju)+i\hat{\rho}(v,w,Ju)=\Psi(v,w,Ju)=i\Psi(v,w,u)=i^2\hat{\rho}(v,w,u)=-\hat{\rho}(v,w,u)\neq 0,
\end{equation*}
i.e.\ $\rho(v,w,Ju)\neq 0$.
\item[(b)]
Since all equations are invariant under non-zero rescalings, we may assume that $v$ has norm one. Now $\SU(3)$ acts transitively on the six-sphere $S^6$. Consequently, there is an adapted basis $(f_1,\ldots,f_6)$ of $\rho$ at $p\in M$ with $f_1=v$, and so $f_2=J_p f_1=J_p v$. But so
\begin{equation*}
\begin{split}
\omega_1&=v\hook \rho_p=f_1\hook \left(f^{135}-f^{146}-f^{236}-f^{245}\right)=f^{35}-f^{46},\\
\omega_2&=(J_pv)\hook \rho_p=f_2\hook \left(f^{135}-f^{146}-f^{236}-f^{245}\right)=-f^{36}-f^{45},\\
\end{split}
\end{equation*}
Then a straightforward computation shows that $\omega_1$ and $\omega_2$ have the desired properties.
\end{enumerate}
\end{proof}
Similarly, one knows that $\SU(3)\subseteq \mathrm{SO}(6)$ and so an $\SU(3)$-structure induces a Riemannian metric $g$ as follows:
\begin{definition}
Let $(\omega,\rho)$ be an $\SU(3)$-structure on a six-dimensional manifold $M$. Define $g=g_{(\omega,\rho)}$ to be the Riemannian metric on $M$ for which any adapted basis $(f_1,\ldots,f_6)$ at any point $p\in M$ is orthonormal.
Now $\omega^3$ is a volume form on $M$ and so $\omega$ induces an orientation on $M$. We get an induced almost complex structure $J_{\rho}$, which relates $g$ to $\omega$ by the equation
\begin{equation*}
g=\omega(J_{\rho}\cdot,\cdot).
\end{equation*}
Hence, $(g,J,\omega)$ is an almost Hermitian structure on $M$. Moreover, $\Psi=\rho+i\hat{\rho}$ is of constant length.
\end{definition}
Next, we turn to the special class of $\SU(3)$-structures defined in \cite{ChSa}:
\begin{definition}
An $\SU(3)$-structure $(\omega,\rho)$ on a six-dimensional manifold $M$ is called \emph{half-flat} if $d\omega^2=0$ and $d\rho=0$.
\end{definition}
Finally, we turn to $\G_2$-structures and use that $\G_2\subseteq \mathrm{SO}(7)\subseteq \GL^+(7,\bR)$:
\begin{definition}
  Let $\varphi\in \Omega^3 M$ be a $\G_2$-structure on a seven-dimensional manifold. Define $g=g_{\varphi}$ to be the Riemannian metric on $M$ for which any adapted basis $(f_1,\ldots,f_7)$ at any point $p\in M$ is orthonormal. Similarly, define an orientation on $M$ by requiring that any adapted basis $(f_1,\ldots,f_7)$ at any point $p\in M$ is oriented. We get an induced Hodge star operator $\star_{\varphi}$ and
\begin{equation*}
(\star_{\varphi}\varphi)_p=f^{1234}+f^{1256}+f^{3456}+f^{1367}+f^{1457}+f^{2357}-f^{2467}
\end{equation*}
for any $p\in M$ and any adapted basis $(f_1,\ldots,f_7)$ at $p$ with dual basis $(f^1,\ldots,f^7)$.
Moreover, $\star_{\varphi}\varphi$ is pointwise isomorphic to
\begin{equation*}
\star_{\varphi_0}\varphi_0=\tfrac{1}{2}\omega_0^2+e^7\wedge \hat{\rho}_0\in \Lambda^4 (\bR^7)^*.
\end{equation*}
\end{definition}
A \emph{$\G_2$-structure on a seven-dimensional vector space $V$} is simply a constant $\G_2$-structure on the manifold $V$, or said more directly, a three-form $\varphi\in \Lambda^3 V^*$ for which there exists an vector space isomorphism $u:V\rightarrow \bR^7$ with $u^*\varphi_0=\varphi$. So $\G_2$-structures $\varphi\in \Omega^3 M$ on seven-dimensional manifolds $M$ are those for which $\varphi_p$ is a $\G_2$-structure on the seven-dimensional vector space $T_p M$ for all $p\in M$.

Similarly, we define an \emph{$\SU(3)$-structure on a six-dimensional vector space $V$} and get the following result:
\begin{lemma}\label{le:SU3G2relation}
Let $\varphi\in \Lambda^3 V^*$ be a $\G_2$-structure on a seven-dimensional vector space $V$. 
Moreover, let $v\in V$ be of norm one with respect to $g_{\varphi}$ and let 
$W:=v^{\perp_{g_{\varphi}}}$. Then there is a unique $\SU(3)$-structure $(\omega,\rho)\in\Lambda^2 W^*\times \Lambda^3 W^*$ on $W$ such that
	\begin{equation*}
	\varphi=\omega\wedge \alpha+\rho,\qquad \star_{\varphi}\varphi=\tfrac{1}{2}\omega^2+\alpha\wedge \hat{\rho},
	\end{equation*}
with $\alpha\in V^*$ uniquely defined by $\alpha(W)=0$ and $\alpha(v)=1$, and $W^*$ identified with the annihilator of $v$.
\end{lemma}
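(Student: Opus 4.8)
The plan is to construct $(\omega,\rho)$ by splitting $\varphi$ along the line $\bR v$ and its orthogonal complement $W$, and then to verify the two displayed identities and the uniqueness by reducing everything to the model forms on $\bR^7$.

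First I would fix $\alpha\in V^*$ as in the statement; since $|v|=1$ and $W=v^{\perp_{g_{\varphi}}}$, this is just the metric dual $g_{\varphi}(v,\cdot)$, and it identifies $W^*$ with $\Ann(v)=\{\xi\in V^*:\xi(v)=0\}$. The splitting $V^*=\bR\alpha\oplus\Ann(v)$ induces the direct sum
\begin{equation*}
\Lambda^3 V^*=\Lambda^3 W^*\oplus(\alpha\wedge\Lambda^2 W^*),
\end{equation*}
so every three-form on $V$ is \emph{uniquely} of the shape $\rho+\omega\wedge\alpha$ with $\rho\in\Lambda^3 W^*$ and $\omega\in\Lambda^2 W^*$. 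Concretely I would put $\omega:=v\hook\varphi$ and $\rho:=\varphi-\omega\wedge\alpha$. A one-line interior-product computation, using $v\hook(v\hook\varphi)=0$, $v\hook\alpha=\alpha(v)=1$, and the antiderivation property of $v\hook(\cdot)$, gives $v\hook\omega=0$ and $v\hook\rho=0$, so $\omega\in\Lambda^2 W^*$, $\rho\in\Lambda^3 W^*$, and the first identity $\varphi=\omega\wedge\alpha+\rho$ holds by construction. Because the above direct sum is genuine, any pair meeting the first identity must equal this $(\omega,\rho)$, which already yields the uniqueness assertion.

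It then remains to show that this $(\omega,\rho)$ is an $\SU(3)$-structure on $W$ and that it satisfies the second identity. The key input is that $\G_2\subseteq\SO(7)$ acts transitively on the unit sphere $S^6\subseteq\bR^7$. Since $\varphi$ determines $g_{\varphi}$ in a $\GL(7,\bR)$-equivariant way, any pointwise isomorphism $u$ realising $\varphi$ is an isometry onto $(\bR^7,g_{\varphi_0})$; composing $u$ with the element of $\G_2$ carrying $u(v)\in S^6$ to $e_7$, I obtain an adapted basis $(f_1,\ldots,f_7)$ for $\varphi$ with $f_7=v$. In its dual basis $\alpha=f^7$ and $\varphi=\omega_0\wedge f^7+\rho_0$, whence $\omega=f_7\hook\varphi=\omega_0$ and $\rho=\rho_0$ on $W=\spa{f_1,\ldots,f_6}$. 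Thus $(\omega,\rho)$ is pointwise isomorphic to $(\omega_0,\rho_0)$, i.e.\ an $\SU(3)$-structure on $W$, and its induced metric and almost complex structure are the restrictions to $W$ of $g_{\varphi}$ and of the model data. Reading the model identity $\star_{\varphi_0}\varphi_0=\tfrac12\omega_0^2+f^7\wedge\hat{\rho}_0$ in this same basis gives precisely $\star_{\varphi}\varphi=\tfrac12\omega^2+\alpha\wedge\hat{\rho}$, the second identity.

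I expect the only real subtlety to be this transitivity step and its bookkeeping: one must invoke (or cite) that $\G_2$ acts transitively on $S^6$ so that the adapted basis can be \emph{aligned} with $v$, and one must be sure that the $\hat{\rho}$ on the right-hand side — the $J_\rho$-image of $\rho$ computed intrinsically on $W$ — coincides with the $\hat{\rho}_0$ read off from the model. The latter is automatic once the basis is aligned, because $g_{\varphi}|_W$, $J_\rho$, $\hat{\rho}$, and $\star_{\varphi}$ are then simultaneously normalised to their model values. Everything else — the splitting, the interior-product identities, and the uniqueness — is purely linear-algebraic and needs no analysis.
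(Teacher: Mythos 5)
Your proposal is correct and follows essentially the same route as the paper: the paper's proof also invokes the transitivity of $\G_2$ on the unit sphere to choose an adapted basis $(f_1,\ldots,f_7)$ with $f_7=v$, and then reads the two identities off the model relations $\varphi_0=\omega_0\wedge e^7+\rho_0$ and $\star_{\varphi_0}\varphi_0=\tfrac12\omega_0^2+e^7\wedge\hat{\rho}_0$. Your additional remarks on the direct-sum decomposition $\Lambda^3V^*=\Lambda^3W^*\oplus(\alpha\wedge\Lambda^2W^*)$ and the explicit formula $\omega=v\hook\varphi$ merely make the uniqueness part more explicit than the paper does.
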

\begin{proof}
$\G_2$ acts transitively on the unit sphere in $\bR^7$. Hence, we may assume that $\varphi$ has an adapted basis $(f_1,\ldots,f_7)$ with $v=f_7$ and so $\alpha=f^7$. Since $(f_1,\ldots,f_7)$ is an orthonormal basis of $V$, we have $W=\spa{f_1,\ldots,f_6}$ and the statements follow from the relations
$\varphi_0=\omega_0\wedge e^7+\rho_0$ and $\star_{\varphi_0}\varphi_0=\tfrac{1}{2}\omega_0^2+e^7\wedge \hat{\rho}_0$ between the model forms of a $\G_2$- and an $\SU(3)$-structure.
\end{proof}
Lemma \ref{le:SU3G2relation} yields the following non-degeneracy condition for $\G_2$-structures, whose proof we omit:
\begin{lemma}\label{le:G2nondegeneracy}
Let $\varphi\in \Lambda^3 V^*$ be a $\G_2$-structure on a seven-dimensional vector space $V$ and let $v,w\in V$ be linearly independent. Then $\varphi(v,w,\cdot)\neq 0$.
	\end{lemma}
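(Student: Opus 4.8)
Lemma \ref{le:G2nondegeneracy} asserts that for a $\G_2$-structure $\varphi\in\Lambda^3V^*$ on a seven-dimensional vector space $V$, any two linearly independent vectors $v,w\in V$ satisfy $\varphi(v,w,\cdot)\neq 0$.

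The plan is to deduce this directly from Lemma \ref{le:SU3G2relation}, which lets us decompose $\varphi$ relative to a unit vector and reduce to the six-dimensional $\SU(3)$-setting, where the analogous statement is part (a) of Lemma \ref{le:SL3Clinearlydependent}. First I would normalise: since the conclusion is invariant under scaling $v$ and $w$ and under replacing $w$ by any vector outside $\spa{v}$, I may assume $v$ has unit norm with respect to $g_\varphi$, and I may adjust $w$ by a multiple of $v$ so that $w\in W:=v^{\perp_{g_\varphi}}$ while keeping $w\neq 0$ (this last step uses that $\varphi(v,v,\cdot)=0$, so altering $w$ within the line $\spa v$ does not change $\varphi(v,w,\cdot)$). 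Then I would invoke Lemma \ref{le:SU3G2relation} to write $\varphi=\omega\wedge\alpha+\rho$ for the induced $\SU(3)$-structure $(\omega,\rho)$ on $W$, with $\alpha(v)=1$ and $\alpha|_W=0$.

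Next I would contract with $v$ and with $w$ and examine $\varphi(v,w,\cdot)$ as a linear functional on $V$. Writing $\varphi=\omega\wedge\alpha+\rho$ and inserting $v,w$, the term $\rho(v,w,\cdot)$ vanishes on $W$-components precisely as controlled by Lemma \ref{le:SL3Clinearlydependent}(a), while the mixed terms from $\omega\wedge\alpha$ can be computed using $\alpha(v)=1$, $\alpha(w)=0$, and $w\in W$. Concretely, evaluating $\varphi(v,w,\cdot)$ on an arbitrary vector and splitting it into its $W$-part and its $\alpha$-part, the $\alpha$-direction picks up a term proportional to $\omega(v,w)$ or $\omega(w,\cdot)$, and the $W$-directions reproduce $\rho(v,w,\cdot)$ together with contributions of $\omega$. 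The idea is that for $\varphi(v,w,\cdot)$ to vanish identically, all of these pieces must vanish simultaneously, forcing $w$ to be $\bC$-linearly dependent on $v$ in the induced almost complex structure $J_\rho$ on $W$, which — since $v\notin W$ — is impossible unless $w=0$.

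The main obstacle is bookkeeping the interaction between the two summands $\omega\wedge\alpha$ and $\rho$ when $v\notin W$: one of the two vectors ($v$) has a nonzero $\alpha$-component while the other ($w$) lies in $W$, so the contraction $v\hook w\hook(\omega\wedge\alpha)$ produces a genuine $\omega(w,\cdot)$ term along $W$ plus an $\alpha$-term, and one must verify these cannot cancel against $\rho(v,w,\cdot)$. The cleanest route is to use an adapted basis $(f_1,\dots,f_7)$ with $v=f_7$, so $\alpha=f^7$, $\omega=\omega_0$, and $\rho=\rho_0$ in the standard model; then $\varphi(v,w,\cdot)=\varphi(f_7,w,\cdot)=(f_7\hook\varphi)(w,\cdot)$ and $f_7\hook\varphi=f_7\hook(\omega_0\wedge f^7+\rho_0)=\omega_0$, reducing the claim to $\omega_0(w,\cdot)\neq 0$ for $w\neq 0$. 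This is immediate since $\omega_0$ is nondegenerate. Thus after the normalisation the proof collapses to the nondegeneracy of $\omega_0$, and I expect the only care required is justifying the reduction $w\in W$ and handling the case where $w$ is not orthogonal to $v$, where one instead applies Lemma \ref{le:SL3Clinearlydependent}(a) to the $\rho_0$-component.
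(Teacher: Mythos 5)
Your proof is correct and is exactly the argument the paper has in mind: the paper omits the proof but states that the lemma is yielded by Lemma \ref{le:SU3G2relation}, and your reduction --- replace $w$ by its nonzero component in $W=v^{\perp_{g_\varphi}}$ (harmless since $\varphi(v,v,\cdot)=0$), pick an adapted basis with $v=f_7$, and note $f_7\hook\varphi=f_7\hook(\omega_0\wedge f^7+\rho_0)=\omega_0$, so the claim collapses to the nondegeneracy of $\omega_0$ on $W$ --- is the intended route. The earlier, more tentative discussion of possible cancellations between $\omega\wedge\alpha$ and $\rho$ and of $\bC$-linear dependence is superseded by that final computation and can simply be dropped.
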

Next, we consider some representation theory of $\G_2$. Consider the $\G_2$-representation $\Lambda^k(\bR^7)^*$ for $k\in\{0,\ldots,7\}$. We decompose this representation into its irreducible components, which are by now well-known. Since the Hodge star operator $\star_{\varphi_0}$ is an isomorphim of $\G_2$-representations between $\Lambda^k(\bR^7)^*$ and $\Lambda^{7-k}(\bR^7)^*$, it suffices to do this for $k\in \{0,\ldots,3\}$. 
Obviously, $\Lambda^0(\bR^7)^*$ is trivial, and $\Lambda^1(\bR^7)^*\cong \bR^7$ is also irreducible. For $k=2,3$, we have:
\begin{equation*}
\begin{split}
\Lambda^2 (\bR^7)^*&=\Lambda^2_7 \7\oplus \Lambda^2_{14} \7,\\
\Lambda^3 (\bR^7)^*&=\bR\varphi_0\oplus \Lambda^3_7 \7\oplus \Lambda^3_{27} \7
\end{split}
\end{equation*}
with
\begin{equation*}
\begin{split}
\Lambda^2_7 \7&=\left\{v\hook \varphi_0\left|v\in \bR^7\right.\right\},\quad \Lambda^2_{14} \7=\left\{\left.\nu\in \Lambda^2(\bR^7)^*\right| \nu\wedge \varphi_0=-\star_{\varphi_0}\nu\right\},\\
\Lambda^3_7 \7&=\left\{v\hook \star_{\varphi_0}\varphi_0\left|v\in \bR^7\right.\right\},\quad
\Lambda^3_{27} \7=\left\{\left.\gamma\in \Lambda^3(\bR^7)^*\right| \gamma\wedge \varphi_0=0,\, \gamma\wedge \star_{\varphi_0}\varphi_0=0\right\}.
\end{split}
\end{equation*}
The subscript denotes the dimension of the irreducible representation. For example, $\Lambda^2_{14} \7$ is isomorphic to the adjoint representation $\mfg_2$, as can be seen by using the metric $g_{\varphi_0}$ to identify a two-form $\nu\in \Lambda^2_{14} \7$ with an endomorphism of $\bR^7$.
The decompositions of $\Lambda^k (\bR^7)^*$ into irreducible $\G_2$-representations give rise to corresponding decompositions of $\Omega^k(M)$. In particular, $\Omega^2_{14} (M)=\left\{\nu\in \Omega^2 (M)\left| \nu\wedge \varphi=\star_{\varphi}\nu\right.\right\}$ is a $C^{\infty}(M)$-submodule of $\Omega^2(M)$.
\subsection{Closed $\G_2$-structures}
In this subsection, we consider the following situation.
\begin{definition}
A $\G_2$-structure $\varphi\in \Omega^3 M$ on a seven-dimensional manifold $M$ is called \emph{closed} if $d\varphi=0$.
If $\varphi$ is a closed $\G_2$-structure, then
\begin{equation*}
d\star_{\varphi}\varphi=\tau\wedge \varphi
\end{equation*}
for a unique two-form $\tau \in \Omega^2_{14}(M)$. The two-form $\tau$ is called the \emph{torsion two-form} of 
$\varphi$ and encodes the intrinsic torsion of $\varphi$.
\end{definition}
The notion of a closed $\G_2$-structure with \emph{special torsion of positive or negative type} was introduced by Ball in \cite{Ba2} in an attempt to study so-called \emph{quadratic closed $\G_2$-structures}, as explained in the next subsection.

To motivate the definition of closed $\G_2$-structure with special torsion, let $\varphi\in \Omega^3 M$ be a closed $\G_2$-structure with associated torsion two-form $\tau$. Then $\tau$ pointwise lies in the adjoint representation $\mfg_2$. The adjoint action of $\G_2$ on $\mfg_2$ has three different types of orbits, distinguished by the conjugacy classes of the $\G_2$-stabiliser at some point in the orbit:

There is the `generic' case, where the stabiliser is a maximal torus $T^2$ inside of $\G_2$.

Then there are two exceptional orbits, where the stabiliser is some copy of $\mathrm{U}(2)$, but the two copies $\mathrm{U}(2)^+$ and $\mathrm{U}(2)^-$ are not conjugate to each other. The first orbit $\G_2/\mathrm{U}(2)^+$ is the twistor space of the Wolf space $\G_2/SO(4)$, whereas $\G_2/\mathrm{U}(2)^-$ can be regarded as a twistor space of $S^6\cong\G_2/\SU(3)$ and as such is biholomorphic to a complex quadric \cite{Br1}.

We say that $\varphi$ has \emph{special torsion of positive type} or \emph{negative type}, respectively, if the pointwise stabiliser of $\tau$ is in each point conjugate to $\mathrm{U}(2)^+$ or $\mathrm{U}(2)^-$, respectively. As shown in \cite{Ba2}, these conditions can be characterised by properties of $\tau^3$:
\begin{definition}
Let $\varphi\in \Omega^3 M$ be a closed $\G_2$-structure on a seven-dimensional manifold $M$ with associated torsion two-form $\tau\in \Omega^2 M$.
Then $\varphi$ has \emph{special torsion of positive type} if $\tau^3=0$, and $\varphi$ has \emph{special torsion of negative type} if $\left|\tau^3\right|_{\varphi}^2=\tfrac{2}{3}\left|\tau\right|_{\varphi}^6$.
\end{definition}
\subsection{Closed $\G_2$-eigenforms for the Laplacian}
In this subsection, we discuss properties of closed $\G_2$-eigenforms (from now on, we shall omit the words `for the Laplacian'), and their relation to other
structures. We repeat the definition:
\begin{definition}
A $\G_2$-structure $\varphi$ on a seven-dimensional manifold $M$ is called a \emph{closed $\G_2$-eigenform} if $d\varphi=0$ and there exists some $\mu\in \bR\setminus \{0\}$ such that
\begin{equation*}
\Delta_{\varphi} \varphi=\mu \varphi.
\end{equation*}
\end{definition}
\begin{remark}$\hphantom.$\\[-10pt]
	\begin{itemize}
       \item
       Any closed $\G_2$-structure $\varphi$ (on a connected manifold) with $\Delta_{\varphi} \varphi= f \varphi$ for some $f\in C^{\infty}(M)$, $f\neq 0$, is a closed $\G_2$-eigenform. For, differentiation gives
       \begin{equation*}
       df\wedge \varphi=d(f\varphi)=d\Delta_{\varphi} \varphi=d^2 \delta_{\varphi}\varphi=0,
       \end{equation*}
       and so $df=0$ since wedging with $\varphi$ injects $\Omega^1 M$ into $\Omega^4 M$. Hence, $f$ is constant.
		\item 
	 A closed $\G_2$-eigenform $\varphi$ is an exact $\G_2$-structure since
		\begin{equation*}
			\varphi=\tfrac{1}{\mu} \Delta_{\varphi} \varphi=d\left(\tfrac{\delta_{\varphi} \varphi}{\mu}\right).
		\end{equation*}
		\item
		A closed $\G_2$-eigenform $\varphi$ is an example of a \emph{Laplacian soliton}, i.e.\ a soliton for the \emph{Laplacian flow (of closed $\G_2$-structures)} given by
		\begin{equation*}
			\dot{\varphi}_t=\Delta_{\varphi_t}\varphi_t.
		\end{equation*}
		More generally, a \emph{Laplacian soliton} $\varphi$ is a closed $\G_2$-structure satisfying
		\begin{equation*}
			\Delta_{\varphi} \varphi=\mu \varphi+\cL_X \varphi
		\end{equation*}
		for some $X\in \mathfrak{X}(M)$, $\mu\in \bR$.
		\item
		Lotay and Wei showed in \cite{LW1} that a compact seven-dimensional manifold cannot support any Laplacian soliton $\varphi$ with $X=0$ unless $\varphi$ is torsion-free. In particular, there do not exist any closed $\G_2$-eigenforms on compact manifolds.
		\item
		In \cite{LW1} it is also shown that for a closed $\G_2$-eigenform $\varphi\in \Omega^3 M$ with 
		$\Delta_{\varphi} \varphi=\mu \varphi$ we must have $\mu>0$.
	\end{itemize}
\end{remark}
Let $\varphi$ be a closed $\G_2$-eigenform on a seven-dimensional manifold $M$. By the last remark, we then have $\Delta_{\varphi} \varphi=\mu \varphi$ for some $\mu>0$.
Hence, by scaling $\varphi$ by an appropriate non-zero factor, we may and will assume for the rest of the article (unless stated otherwise) that $\mu=1$, i.e.\ that
\begin{equation}\label{eq:eigenform}
\Delta_{\varphi} \varphi=\varphi
\end{equation}
In the last subsection, we introduced the torsion $2$-form $\tau\in \Omega_{14}^2(M)$ uniquely defined by
\begin{equation*}
d\star_{\varphi}\varphi=\tau\wedge \varphi.
\end{equation*}
Since $\tau\in \Omega_{14}^2(M)$, we do have $\tau\wedge \varphi=-\star_{\varphi}\tau$ and so
\begin{equation*}
\tau=\star_{\varphi}\star_{\varphi} \tau=-\star_{\varphi} (\tau\wedge \varphi)=-\star_{\varphi} d\star_{\varphi}\varphi=\delta_{\varphi}\varphi,
\end{equation*}
which yields
\begin{equation}\label{eq:eigenformtorsion2form}
\Delta_{\varphi} \varphi=d\delta_{\varphi}\varphi=d\tau
\end{equation}
\begin{remark} 
Let $\varphi$ be a closed $\G_2$-eigenform. Then $d\tau=\Delta_{\varphi} \varphi=\mu \varphi$ for some $\mu>0$, which we do not assume to be equal to $1$ in this remark. Since wedging with
$\star_{\varphi}\varphi$ is pointwise $\G_2$-equivariant from $\Omega^2(M)$ to $\Omega^6 M$ and $\Omega^6 M$ is pointwise isomorphic to the $\G_2$-representation $\bR^7$, we must have $\tau\wedge \star_{\varphi}\varphi=0$. Thus,
\begin{equation*}\ts
7\mathrm{vol}_{\varphi}=\varphi\wedge \star_{\varphi}\varphi=\frac{1}{\mu} d\tau\wedge \star_{\varphi}\varphi=-\frac{1}{\mu} \tau\wedge d\star_{\varphi}\varphi=-\frac{1}{\mu} \tau^2\wedge \varphi=\frac{1}{\mu} \tau\wedge \star_{\varphi}\tau=\frac{\left|\tau\right|^2_{\varphi}}{\mu} \mathrm{vol}_{\varphi},
\end{equation*}
i.e.\ $\mu=\frac{1}{7}\left|\tau\right|^2_{\varphi}$, and so
\begin{equation*}\ts
d\tau=\frac{1}{7}\left|\tau\right|^2_{\varphi} \varphi.
\end{equation*}
In particular, $\left|\tau\right|_{\varphi}$ is constant. Moreover, closed $\G_2$-eigenforms are special kinds of so-called \emph{$\lambda$-quadratic closed $\G_2$-structures},
which are closed $\G_2$-structures $\varphi\in \Omega^3 M$ fulfilling
\begin{equation*}
d\tau= \tfrac{1}{7} \left|\tau\right|_{\varphi}^2 \varphi+ \lambda \left(\tfrac{1}{7} \left|\tau\right|_{\varphi}^2 \varphi+\star_{\varphi}(\tau\wedge \tau)\right)
\end{equation*}
for $\lambda\in \bR$, namely those for $\lambda=0$.

Note that $\lambda\big(\tfrac{1}{7} \left|\tau\right|_{\varphi} \varphi+\star_{\varphi}(\tau\wedge \tau)\big)$ lies in $\Omega^3_{27}(M)$, so the above decomposition can be seen as one of $d\tau\in \Omega^3 M$ into the three components
$\Omega^3_1(M):=C^{\infty}(M)\cdot \varphi$,  $\Omega^3_7(M)$ and $\Omega^3_{27}(M)$ of $\Omega^3 M$ with the $\Omega^3_7(M)$-component being zero.

More generally, for any closed $\G_2$-structure $\varphi$, the $\Omega^3_1(M)$-part of $d\tau$ equals
 $\frac{1}{7}\left|\tau\right|^2_{\varphi} \varphi$ and the $\Omega^3_7(M)$-part of $d\tau$ vanishes, i.e.\ we always have
 $d\tau=\tfrac{1}{7} \left|\tau\right|_{\varphi}^2 \varphi+\gamma$ for some $\gamma\in \Omega^3_{27}(M)$. 

One can show that $\lambda$-quadratic closed $\G_2$-structure are exactly those closed $\G_2$-structures for which $\gamma\in \Omega^3_{27}(M)$, and so the entire three-form $d\tau$, depends quadratically on $\tau$, explaining the naming of these structures.
\end{remark}
We restrict now to left-invariant $\G_2$-structures on seven-dimensional Lie groups $G$. These will from now on be identified with the corresponding structures on the associated seven-dimensional Lie algebra $\mfg$. 

As stated already, it is well known that a seven-dimensional nilpotent Lie algebra cannot admit an exact $\G_2$-structure, see e.g.\ \cite{CF}, and so we have to look for exact $\G_2$-structures on the more general class of solvable Lie algebras. 
We will give now a new proof of this fact, and in the process prove a slightly stronger result. Fist, we recall the following.
\begin{definition}
	Let $\mfk$ be a Lie algebra.
	
	The \emph{descending central series} $\mfk^0,\mfk^1,\ldots$ of $\mfk$ is defined by $\mfk^0:=\mfk$, $\mfk^1:=[\mfk,\mfk]$ and inductively by $\mfk^k:=[\mfk,\mfk^{k-1}]$ for all $k\in \bN$.
	
	The \emph{ascending central series} $\mfk_0,\mfk_1,\ldots$ of $\mfk$ is defined by $\mfk_0:=\{0\}$, $\mfk_1:=\mathfrak{z}(\mfk)$ and inductively by
	$\mfk_k:=\left\{X\in \mfk| [X,\mfk]\subseteq \mfk^{k-1}\right\}$ for all $k\in \bN$. 
	
	Note that $\mfk$ is nilpotent if and only if $\mfk^r=\{0\}$ for some $r\in \bN$ or, equivalently, if and only if $\mfk_s=\mfk$ for some $s\in \bN$ (and then $r=s$).
\end{definition}
This allows us to prove:
\begin{proposition}\label{pro:noexactG2nilpotentLA}
	Let $\mfg$ be a seven-dimensional Lie algebra. If $\mfg$ admits an exact $\G_2$-structure, then $\dim(\mathfrak{z}(\mfg))\leq 1$ and $\mfg_2=\mathfrak{z}(\mfg)$. In particular, if $\mfg$ is nilpotent, or, more generally, if $\mfg=\mfa\oplus \mfb$ is a direct sum of Lie algebras $\mfa$, $\mfb$ with $\mfb$ nilpotent and $\dim(\mfb)\geq 2$, then it cannot admit any exact $\G_2$-structure.
\end{proposition}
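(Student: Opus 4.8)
The plan is to exploit exactness, $\varphi=d\beta$ for some $\beta\in\Lambda^2\mfg^*$, together with the nondegeneracy recorded in Lemma~\ref{le:G2nondegeneracy}, namely that $\varphi(v,w,\cdot)\neq 0$ whenever $v,w\in\mfg$ are linearly independent. The two structural ingredients I would combine are Cartan's formula $\cL_X\alpha=X\hook d\alpha+d(X\hook\alpha)$ on the Chevalley--Eilenberg complex, and the fact that $d$ annihilates $0$-forms. The recurring idea is that contracting the \emph{exact} form $\varphi$ twice with suitable elements produces $d$ of a scalar, hence $0$; by nondegeneracy this can only happen when the two contracting vectors are linearly dependent.

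First I would prove $\dim\mathfrak{z}(\mfg)\le 1$. For a central element $Z$ one has $\cL_Z=0$ on all forms (as $\ad_Z=0$), so Cartan's formula gives $Z\hook d=-d(Z\hook\,\cdot\,)$. Given two central $Z_1,Z_2$, I would push both contractions through $d$: writing $\eta:=Z_1\hook\beta$, one gets $Z_1\hook\varphi=-d\eta$ and then $Z_2\hook(Z_1\hook\varphi)=d(Z_2\hook\eta)=d\bigl(\beta(Z_1,Z_2)\bigr)=0$, since $\beta(Z_1,Z_2)$ is a scalar. As $Z_2\hook(Z_1\hook\varphi)=\varphi(Z_1,Z_2,\cdot)$, Lemma~\ref{le:G2nondegeneracy} forces $Z_1,Z_2$ to be linearly dependent, i.e.\ $\dim\mathfrak{z}(\mfg)\le 1$.

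Next I would establish $\mfg_2=\mathfrak{z}(\mfg)$. The inclusion $\mathfrak{z}(\mfg)\subseteq\mfg_2$ is automatic, and if $\mathfrak{z}(\mfg)=\{0\}$ then $\mfg_2=\{0\}$ straight from the definition; so assume $\mathfrak{z}(\mfg)=\bR Z$. For $X\in\mfg_2$, set $\gamma:=Z\hook\beta$, so that $Z\hook\varphi=-d\gamma$. Cartan's formula yields $X\hook(Z\hook\varphi)=-\cL_X\gamma+d(X\hook\gamma)$, and $d(X\hook\gamma)=0$ because $X\hook\gamma$ is a scalar. The crucial point is that $\cL_X\gamma$ also vanishes: $(\cL_X\gamma)(Y)=-\gamma([X,Y])$, and since $X\in\mfg_2$ the bracket $[X,Y]$ lies in $\mathfrak{z}(\mfg)=\bR Z$, so $\gamma([X,Y])$ is a multiple of $\gamma(Z)=\beta(Z,Z)=0$. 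Hence $\varphi(Z,X,\cdot)=X\hook(Z\hook\varphi)=0$, and Lemma~\ref{le:G2nondegeneracy} forces $X\in\bR Z=\mathfrak{z}(\mfg)$. For the ``in particular'' assertions I would invoke that the ascending central series respects direct sums, $\mfg_k=\mfa_k\oplus\mfb_k$ for $\mfg=\mfa\oplus\mfb$: a nonzero nilpotent $\mfg$, or a nilpotent factor $\mfb$ of dimension $\ge 2$, is either abelian, contributing a centre of dimension $\ge 2$ (contradicting $\dim\mathfrak{z}(\mfg)\le 1$), or nonabelian, in which case $\mfg/\mathfrak{z}(\mfg)$ has nontrivial centre and thus $\mfg_2\supsetneq\mathfrak{z}(\mfg)$ (contradicting the first part).

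The main obstacle I anticipate is not conceptual but a matter of ordering and bookkeeping. The vanishing of $\cL_X\gamma$ in the second step is the heart of the argument, and it genuinely requires \emph{both} $X\in\mfg_2$ (so that $[X,Y]$ is central) \emph{and} the bound $\dim\mathfrak{z}(\mfg)\le 1$ already in hand (so that the central bracket is a multiple of the single generator $Z$, which $\gamma$ kills via $\beta(Z,Z)=0$). Unlike for a genuinely central vector, $\cL_X$ does \emph{not} vanish a priori for $X\in\mfg_2$, so the two parts must be proved in this order and the Cartan-formula signs kept straight throughout.
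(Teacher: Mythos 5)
Your proposal is correct and follows essentially the same route as the paper: both arguments contract the exact form $\varphi=d\beta$ with two central vectors (resp.\ with a central vector and an element of $\mfg_2$), observe that the contraction vanishes because every bracket that appears is killed by $\beta(\cdot\,,Z)$, and then invoke Lemma~\ref{le:G2nondegeneracy}. The only cosmetic difference is that the paper expands the Chevalley--Eilenberg differential $d\chi(X,Y,Z)$ directly where you route the same cancellations through Cartan's formula, and your justification of the ``in particular'' clause via $\mfg_k=\mfa_k\oplus\mfb_k$ matches the paper's (terser) conclusion.
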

\begin{proof}
	Assume that $\varphi\in \Lambda^2 \mfg^*$ is an exact $\G_2$-structure on $\mfg$, i.e.\ $d\chi=\varphi$ for some $\chi\in \Lambda^2 \mfg^*$. Let $X,Y\in \mathfrak{z}(\mfg)$. Then
	\begin{equation*}
	0=d\chi(X,Y,Z)=\varphi(X,Y,Z)
	\end{equation*}
	for any $Z\in \mfg$. Hence, by Lemma \ref{le:G2nondegeneracy}, the vectors $X$ and $Y$ have to be linearly dependent. Thus, $\dim(\mathfrak{z}(\mfg))\leq 1$.
	
	If $\dim(\mathfrak{z}(\mfg))=0$, then trivially $\mfg_2=\mathfrak{z}(\mfg)$.
	So let us assume that $\dim(\mathfrak{z}(\mfg))=1$ and take $X\in \mathfrak{z}(\mfg)\setminus \{0\}$. If
	$\mfg_2\neq\mathfrak{z}(\mfg)$, then there exists $Y\in \mfg_2$ linearly independent of $X$ and we get
	$[Y,Z]\in \spa{X}$ and so
	\begin{equation*}
	0=-\chi([Y,Z],X)=d\chi(X,Y,Z)=\varphi(X,Y,Z)
	\end{equation*}
	for any $Z\in \mfg$, which contradicts Lemma \ref{le:G2nondegeneracy}. Thus, $\mfg_2=\mathfrak{z}(\mfg)$.
	
	So $\mfg$ certainly cannot be nilpotent nor can it be of the form $\mfg=\mfa\oplus \mfb$ with $\mfb$ being nilpotent and $\dim(\mfb)\geq 2$.
\end{proof}

\section{Reduction to six dimensions}\label{sec:reductionto6d}
In this section, we reduce the existence problem of closed $\G_2$-eigenforms or exact $\G_2$-structures on a seven-dimensional Lie algebra $\mfg$ to the existence of $\SU(3)$-structures of a certain type on a codimension-one ideal $\mfh$ satisfying specific equations. By \cite[Proposition 3.2]{LN1}, such an ideal $\mfh$ always exists:
\begin{proposition}\label{pro:codimoneideal}
	Let $\mfg$ be a seven-dimensional Lie algebra admitting a closed $\G_2$-structure $\varphi\in \Lambda^3 \mfg^*$. Then $\mfg$ admits a unimodular
	codimension-one ideal $\mfh$.
\end{proposition}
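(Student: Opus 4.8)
The plan is to extract a codimension-one ideal from the kernel of a suitable linear functional on $\mfg$, and the natural candidate is built from the trace form together with the closed $\G_2$-structure. First I would recall that since $\varphi$ is closed, the associated metric $g_\varphi$ and Hodge star $\star_\varphi$ are available, and $\delta_\varphi\varphi$ is a well-defined $2$-form on $\mfg$. The key is to produce a $1$-form $\beta\in\mfg^*$ whose kernel is both codimension-one and an ideal, and to arrange that this ideal is unimodular.

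The main idea I would pursue is to consider the trace functional $H\in\mfg^*$ defined by $H(X)=\tr(\ad_X)$, the so-called unimodularity vector. If $\mfg$ is already unimodular then $H=0$ and one must argue differently, so the argument splits into two cases. In the non-unimodular case, $\mfh:=\ker H$ is a codimension-one ideal (it is an ideal because $\tr(\ad_{[X,Y]})=0$ always, so $H$ annihilates $\mfg^1=[\mfg,\mfg]$, and any subspace containing the commutator ideal is automatically an ideal). One then checks that $\mfh=\ker H$ is unimodular: for $X\in\mfh$ the restricted adjoint $\ad_X|_\mfh$ has trace equal to $\tr(\ad_X)-(\text{the }\mfg/\mfh\text{-component})$, and because $\mfh$ is an ideal the quotient action is by $X\mapsto\tr(\ad_X\ \mathrm{mod}\ \mfh)$, which one shows vanishes on $\mfh$ so that $\tr(\ad_X|_\mfh)=H(X)=0$. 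The genuinely interesting case is when $\mfg$ is unimodular; there one uses the closed $\G_2$-structure to produce the functional. Concretely, following Lauret--Nicolini, I would look at the mean-curvature-type $1$-form coming from $d\star_\varphi\varphi=\tau\wedge\varphi$ and show that closedness of $\varphi$ forces an appropriate linear constraint whose kernel serves as $\mfh$.

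The step I expect to be the main obstacle is the unimodular case, where $H=0$ gives no ideal for free. The delicate point is to show that closedness of $\varphi$ (as opposed to merely having a $\G_2$-structure) forces the existence of a codimension-one \emph{ideal}, not just a codimension-one subalgebra. I expect the argument to proceed by considering the symmetric and antisymmetric parts of the bilinear map encoding the bracket relative to $g_\varphi$, and using that $d\varphi=0$ constrains these parts; the non-degeneracy Lemma \ref{le:G2nondegeneracy} should then be invoked to rule out the situation where no such ideal exists. Once $\mfh$ is produced, verifying that it is unimodular is a routine trace computation as above, using that $\mfh$ being an ideal makes the $\mathfrak{g}/\mathfrak{h}$-action one-dimensional and hence controllable.

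Since this is precisely \cite[Proposition~3.2]{LN1}, I would ultimately cite that reference for the full details rather than reproduce the (somewhat intricate) case analysis, noting only that the non-unimodular case is elementary via $\ker H$ and the unimodular case is the substantive one.
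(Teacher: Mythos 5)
Your proposal is essentially the same as the paper's treatment: the paper gives no proof of Proposition \ref{pro:codimoneideal} at all and simply cites \cite[Proposition 3.2]{LN1}, which is exactly where your argument also terminates. Your elementary handling of the non-unimodular case via $\mfh=\ker(\tr\circ\ad)\supseteq[\mfg,\mfg]$ is correct, but the substantive case (unimodular $\mfg$, where the closedness of $\varphi$ must actually be used) is deferred to the same citation in both your write-up and the paper, so there is nothing further to compare.
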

To obtain the reduction from seven to six dimensions, we need to recall what a derivation of a Lie algebra $\mfh$ is and how an endomorphism of $\mfh$ acts on $\Lambda^* \mfh^*$:
\begin{definition}
Let $\mfh$ be a Lie algebra, $f\in \End(\mfh)$ be an (vector space) endomorphism of $\mfh$ and $\alpha\in \Lambda^k \mfh^*$ be a $k$-form on $\mfh$. Then the $k$-form
$f.\alpha\in \Lambda^k \mfh^*$ is defined by
\begin{equation*}
(f.\alpha)(X_1,\ldots,X_k):=-\left(\alpha(f(X_1),X_2,\ldots,X_k)+\ldots+\alpha(X_1,\ldots,X_{k-1},f(X_k))\right).
\end{equation*}
A \emph{derivation of $\mfh$} is a (vector space) endomorphism $f\in \End(\mfh)$ of $\mfh$ such that $f([X,Y])=[f(X),Y]+[X,f(Y)]$ for all $X,Y\in \mfh$.
\end{definition}
\begin{remark}
Let $\mfh$ be a Lie algebra, $\alpha\in \Lambda^k \mfh^*$, $\beta\in \Lambda^l\mfh^*$ and $f,g\in \mathrm{End}(\mfh)$. Then:
\begin{itemize}
	\item
	Due to the global minus sign in the definition of $f.\alpha$, we have $[f,g].\alpha=f.(g.\alpha)-g.(f.\alpha)$, i.e.\ $ \End(\mfh)\ni f\mapsto f.\in \End(\Lambda^k \mfh^*)$ is a representation of the Lie algebra $\End(\mfh)$ on $\Lambda^k \mfh^*$.
	\item
	Moreover, we have
	\begin{equation*}
	f.(\alpha\wedge \beta)=f.\alpha\wedge \beta+\alpha\wedge f.\beta
	\end{equation*}
	and so $f.(\alpha\wedge \alpha)=2 \alpha\wedge f.\alpha$ if $k$ is even.
	\item
	$f$ is a derivation if and only if $f.d\gamma=d(f.\gamma)$ for all one-forms $\gamma\in \mfh^*$ on $\mfh$ and then the same formula holds for forms of arbitrary degree on $\mfh$. Moreover, the vector space $\mathrm{Der}(\mfh)$ of all derivations of $\mfh$ is a subalgebra of the Lie algebra $\mathrm{End}(\mfh)$ of all (vector space) endomorphisms of $\mfh$ and it is the Lie algebra of the Lie group $\mathrm{Aut}(\mfh)$ of all Lie algebra automorphisms of $\mfh$, i.e. of
	\begin{equation*}
	\mathrm{Aut}(\mfh):=\left\{F\in \GL(\mfh)|F([X,Y])=[F(x),F(Y)]\textrm{ for all } X,Y\in \mfh\right\}\subseteq \GL(\mfh).
	\end{equation*}
\end{itemize}
\end{remark}
Let us begin with the reduction to six dimensions:

For this, let $\varphi$ be a closed $\G_2$-eigenform on a seven-dimensional Lie algebra $\mfg$ and $\mfh$ be a codimension-one ideal. Choose $e_7\in \mfg$ of norm one in the orthogonal complement $\mfh^{\perp_{g_{\varphi}}}$ of $\mfh$ in $\mfh$. Split now $\mfg=\mfh\oplus \spa{e_7}$ and, similarly,
$\mfg^*=\mfh^*\oplus \spa{e^7}$, where $e^7$ is the unique element in the annihilator of $\mfh$ with $e^7(e_7)=1$ and $\mfh^*$ is identified with the annihilator of $e_7$. Set $f:=\ad(e_7)|_{\mfh}$ and note that $f$ is a derivation of $\mfh$. Then
\begin{equation*}
d\alpha=d_{\mfh}\alpha+e^7\wedge f.\alpha,\quad d(\alpha\wedge e^7)=d_{\mfh} \alpha\wedge e^7,\quad d e^7=0
\end{equation*}
for any $\alpha\in \Lambda^k \mfh^*$, where $d_{\mfh}$ is the differential of $\mfh$. Next, decompose $\varphi$ according to the splitting,
i.e.\ write
\begin{equation}\label{eq:varphidecomposition}
\varphi=\omega\wedge e^7+\rho
\end{equation}
with $\omega\in \Lambda^2 \mfh^*$, $\rho\in \Lambda^3 \mfh^*$. Then $(\omega,\rho)$ is an $\SU(3)$-structure on $\mfh$ by Lemma \ref{le:SU3G2relation} and one has
\begin{equation*}
\star_{\varphi}\varphi=\tfrac12\omega^2+e^7\wedge \hat{\rho}.
\end{equation*}
We do the same for the torsion-two form $\tau\in \Lambda^2_{14}\mfg^*$ of $\varphi$, i.e.\ we write
\begin{equation}\label{eq:taudecomposition}
\tau=\nu+\alpha\wedge e^7
\end{equation}
with $\nu\in \Lambda^2 \mfh^*$ and $\alpha\in \mfh^*$. Now the $\G_2$-representation $\Lambda^2_{14}\mfg^*$ splits as $\SU(3)$-representations into $\Lambda^2_{14}\mfg^*=\mfh^*\wedge e^7\oplus [\Lambda^{1,1}_0 \mfh^*]$
as $\Lambda^2_{14} \mfg^*$ is the adjoint representation of $\G_2$ and $[\Lambda^{1,1}_0 \mfh^*]$ is the adjoint representation of $\SU(3)$. Thus, $\nu\in [\Lambda^{1,1}_0 \mfh^*]$
and so $\nu\wedge \rho=0$. Consequently, 
\begin{equation*}
\begin{split}\ts
 \frac{1}{2}\, d_{\mfh}(\omega^2) + e^7\wedge (\omega\wedge f.\omega-d_{\mfh} \hat{\rho})=d \star_{\varphi}\varphi=\tau\wedge \varphi&=(\nu+\alpha\wedge e^7)\wedge (\omega\wedge e^7+\rho)\\
&=e^7\wedge (\omega\wedge \nu-\alpha\wedge \rho),
\end{split}
\end{equation*}
i.e.
\begin{equation*}
d_{\mfh}(\omega^2)=0,\qquad \omega\wedge f.\omega-d_{\mfh} \hat{\rho}=\omega\wedge \nu-\alpha\wedge \rho.
\end{equation*}
Moreover,
\begin{equation*}
d_{\mfh} \nu+e^7\wedge (f.\nu-d_{\mfh} \alpha)=d\tau=\varphi=\omega\wedge e^7+\rho,
\end{equation*}
i.e.
\begin{equation*}
d_{\mfh} \nu=\rho, \qquad f.\nu-d_{\mfh} \alpha=\omega,
\end{equation*}
Hence, $d_{\mfh}\rho=0$ and $d_{\mfh}(\omega^2)=0$ and so the $\SU(3)$-structure $(\omega,\rho)$ on $\mfh$ is half-flat with exact $\rho$. Moreover, we necessarily have $\alpha=0$. For this, note that
$\tau\in \Omega^2_{14} M=\{\beta\in \Omega^2 M|\star_{\varphi}\beta=-\beta\wedge \varphi\}$ and so
\begin{equation*}
\tau=\star_{\varphi}^2 \tau=-\star_{\varphi} (\tau\wedge \varphi)=-\star_{\varphi}\left(e^7\wedge(\omega \wedge \nu-\alpha\wedge \rho)\right)\in \Lambda^2 \mfh^*
\end{equation*}
since $e^7$ is perpendicular to $\mfh^*$ by assumption. Consequently, $\tau=\nu\in \Lambda^3 \mfh^*$ and $\alpha=0$.

Summarizing, we have arrived at
\begin{theorem}\label{th:eigenform}
Let $\mfg$ be a seven-dimensional Lie algebra, $\varphi\in \Lambda^3 \mfg^*$ be a $\G_2$-structure on $\mfg$, $\mfh$ be a codimension-one unimodular ideal in $\mfg$, $e_7\in \mfh^{\perp_{g_{\varphi}}}$ of norm one, $e^7\in \Ann(\mfh)$ with $e^7(e_7)=1$ and $f:=\ad(e_7)|_{\mfh}$. Write
$\varphi=\omega\wedge e^7+\rho$ with $(\omega,\rho)\in \Lambda^2 \mfh^*\times \Lambda^3 \mfh^*$. Then $\varphi$ is a closed $\G_2$-eigenform with $\Delta_{\varphi} \varphi=\varphi$ if and only if $(\omega,\rho)$ is half-flat and there exists a primitive $(1,1)$-form $\nu\in [\Lambda^{1,1}_0 \mfh^*]$ on $\mfh$ such that $\rho=d_{\mfh}\nu$ and
\begin{align}
f.\nu&=\omega, \label{eq:1} \\
\omega\wedge f.\omega-d_{\mfh} \hat{\rho}&=\omega\wedge \nu \label{eq:2}.
\end{align}
\end{theorem}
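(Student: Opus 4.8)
The plan is to prove both implications by pushing the splitting $\mfg=\mfh\oplus\spa{e_7}$ through every form that occurs and matching the $\Lambda^\bullet\mfh^*$- and $e^7\wedge\Lambda^\bullet\mfh^*$-components separately. Two structural facts underlie everything: for a \emph{closed} $\G_2$-structure one has $\Delta_{\varphi}\varphi=d\delta_{\varphi}\varphi=d\tau$ from \eqref{eq:eigenformtorsion2form}, with $\tau=\delta_{\varphi}\varphi\in\Omega^2_{14}$; and the $\SU(3)$-module decomposition $\Lambda^2_{14}\mfg^*=\mfh^*\wedge e^7\oplus[\Lambda^{1,1}_0\mfh^*]$, which forces any $\nu\in[\Lambda^{1,1}_0\mfh^*]$ to be a primitive $(1,1)$-form and hence to satisfy $\nu\wedge\rho=0$. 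Throughout I use $\star_{\varphi}\varphi=\tfrac12\omega^2+e^7\wedge\hat{\rho}$ from Lemma \ref{le:SU3G2relation}.

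For the forward implication I would package the computation preceding the statement. Writing $\tau=\nu+\alpha\wedge e^7$ as in \eqref{eq:taudecomposition}, the module decomposition places $\nu$ in $[\Lambda^{1,1}_0\mfh^*]$, so $\nu\wedge\rho=0$. Expanding $d\star_{\varphi}\varphi=\tau\wedge\varphi$ and comparing the $\Lambda^4\mfh^*$-components gives $d_{\mfh}\omega^2=0$, while the $e^7\wedge\Lambda^3\mfh^*$-components give $\omega\wedge f.\omega-d_{\mfh}\hat{\rho}=\omega\wedge\nu-\alpha\wedge\rho$. Expanding $\varphi=\Delta_{\varphi}\varphi=d\tau$ and again separating components yields $d_{\mfh}\nu=\rho$ and $f.\nu-d_{\mfh}\alpha=\omega$. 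The one genuinely new step is $\alpha=0$: since $\tau\in\Omega^2_{14}$ satisfies $\tau=-\star_{\varphi}(\tau\wedge\varphi)$ and $\tau\wedge\varphi=e^7\wedge(\omega\wedge\nu-\alpha\wedge\rho)\in e^7\wedge\Lambda^4\mfh^*$, its Hodge dual lies in $\Lambda^2\mfh^*$, forcing $\tau=\nu$ and $\alpha=0$. Then $\rho=d_{\mfh}\nu$ gives $d_{\mfh}\rho=0$, so with $d_{\mfh}\omega^2=0$ the pair $(\omega,\rho)$ is half-flat, and \eqref{eq:1}, \eqref{eq:2} follow.

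For the converse I would set $\tau:=\nu$ and verify each ingredient of a closed $\G_2$-eigenform in turn. Closedness: the $\Lambda^4\mfh^*$-part of $d\varphi$ is $d_{\mfh}\rho=d_{\mfh}^2\nu=0$, and the $e^7\wedge\Lambda^3\mfh^*$-part reduces to $f.\rho=d_{\mfh}\omega$, which holds because $f$ is a derivation, whence $f.\rho=f.(d_{\mfh}\nu)=d_{\mfh}(f.\nu)=d_{\mfh}\omega$ by \eqref{eq:1}. Next, $\nu\in[\Lambda^{1,1}_0\mfh^*]\subseteq\Lambda^2_{14}\mfg^*$, and combining half-flatness ($\tfrac12 d_{\mfh}\omega^2=0$) with \eqref{eq:2} and $\nu\wedge\rho=0$ gives $d\star_{\varphi}\varphi=\nu\wedge\varphi$; uniqueness of the torsion form then identifies $\tau=\nu=\delta_{\varphi}\varphi$. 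Finally $\Delta_{\varphi}\varphi=d\tau=d\nu=d_{\mfh}\nu+e^7\wedge f.\nu=\rho+\omega\wedge e^7=\varphi$, using \eqref{eq:1} and $\rho=d_{\mfh}\nu$.

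The anticipated obstacle is bookkeeping around the torsion form rather than any deep difficulty. In the forward direction the delicate point is extracting $\alpha=0$ cleanly from $\tau\in\Omega^2_{14}$; in the converse it is confirming that the constructed $\nu$ is genuinely the intrinsic torsion form $\delta_{\varphi}\varphi$, so that the substitution $\Delta_{\varphi}\varphi=d\tau$ is legitimate. Both rest on $\nu\in\Omega^2_{14}$ and the uniqueness in $d\star_{\varphi}\varphi=\tau\wedge\varphi$. Keeping the $\SU(3)$-module decomposition of $\Lambda^2_{14}\mfg^*$ and the vanishing $\nu\wedge\rho=0$ straight is precisely what makes the signs and component-matching go through.
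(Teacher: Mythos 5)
Your proposal is correct and follows essentially the same route as the paper: the paper's proof is precisely the component-by-component computation preceding the theorem (splitting $\tau=\nu+\alpha\wedge e^7$, using $\Lambda^2_{14}\mfg^*=\mfh^*\wedge e^7\oplus[\Lambda^{1,1}_0\mfh^*]$ to get $\nu\wedge\rho=0$, and killing $\alpha$ via $\tau=-\star_{\varphi}(\tau\wedge\varphi)\in\Lambda^2\mfh^*$). Your explicit verification of the converse, which the paper leaves implicit, is also correct, including the use of $f.(d_{\mfh}\nu)=d_{\mfh}(f.\nu)$ for the derivation $f$ and the uniqueness of the torsion two-form to identify $\tau=\nu$.
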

If we only look for exact $\G_2$-structures $\varphi\in \Lambda^3 \mfg^*$, the same calculations as above show:
\begin{theorem}\label{th:exactG2structure}
Let $\mfg$ be a seven-dimensional Lie algebra, $\varphi\in \Lambda^3 \mfg^*$ be a $\G_2$-structure on $\mfg$, $\mfh$ be a codimension-one unimodular ideal in $\mfg$, $e_7\in \mfh^{\perp_{g_{\varphi}}}$ of norm one, $e^7\in \Ann(\mfh)$ with $e^7(e_7)=1$ and $f:=\ad(e_7)|_{\mfh}$. Write
$\varphi=\omega\wedge e^7+\rho$ with $(\omega,\rho)\in \Lambda^2 \mfh^*\times \Lambda^3 \mfh^*$. Then $\varphi$ is an exact $\G_2$-structure if and only if there exists a two-form $\nu\in \Lambda^2 \mfh^*$ on $\mfh$ and a one-form $\alpha\in \mfh^*$ with $\rho=d_{\mfh}\nu$ and 
\begin{equation}\label{eq:3}
f.\nu-d_{\mfh} \alpha=\omega.
\end{equation}
\end{theorem}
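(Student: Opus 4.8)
The plan is to imitate the derivation preceding Theorem \ref{th:eigenform}, but to exploit that exactness of $\varphi$ is the single condition $\varphi=d\beta$ with no accompanying torsion or Hodge-star constraints, so that the whole argument collapses to matching components under the splitting $\Lambda^\bullet\mfg^*=\Lambda^\bullet\mfh^*\oplus(\Lambda^{\bullet-1}\mfh^*\wedge e^7)$. I would rely throughout on the three differential identities displayed above, namely $d\eta=d_{\mfh}\eta+e^7\wedge f.\eta$, $d(\eta\wedge e^7)=d_{\mfh}\eta\wedge e^7$ and $de^7=0$ for $\eta\in\Lambda^k\mfh^*$, together with the fact that every $\beta\in\Lambda^2\mfg^*$ decomposes uniquely as $\beta=\nu+\gamma\wedge e^7$ with $\nu\in\Lambda^2\mfh^*$ and $\gamma\in\mfh^*$. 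Note that unimodularity of $\mfh$ plays no role here: all that is used is that $\mfh$ is an ideal, so that $f=\ad(e_7)|_{\mfh}$ is a derivation and the identities apply.

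For the `if' direction I would take the $\nu$ and $\alpha$ furnished by the hypothesis, set $\beta:=\nu-\alpha\wedge e^7\in\Lambda^2\mfg^*$, and compute $d\beta$ directly from the identities. The $\Lambda^3\mfh^*$-component is $d_{\mfh}\nu$, which equals $\rho$ by assumption, while the remaining terms collect to $e^7\wedge(f.\nu-d_{\mfh}\alpha)=e^7\wedge\omega$ by \eqref{eq:3}; reassembling gives $d\beta=\rho+\omega\wedge e^7=\varphi$, so $\varphi$ is exact.

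For the `only if' direction I would start from any $\beta\in\Lambda^2\mfg^*$ with $d\beta=\varphi$, write its unique decomposition as $\beta=\nu-\alpha\wedge e^7$, and run the same computation to get $d\beta=d_{\mfh}\nu+e^7\wedge(f.\nu-d_{\mfh}\alpha)$. Comparing this with $\varphi=\rho+\omega\wedge e^7$ and invoking the uniqueness of the splitting of $\Lambda^3\mfg^*$ into its $\Lambda^3\mfh^*$- and $(\Lambda^2\mfh^*\wedge e^7)$-parts, I would read off $\rho=d_{\mfh}\nu$ from the first component and \eqref{eq:3} from the second, which exhibits the required $\nu$ and $\alpha$.

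I expect no genuine obstacle in this statement: in contrast to the eigenform case there is no $\Omega^2_{14}$-constraint forcing $\alpha=0$, no primitivity requirement on $\nu$, and no secondary equation such as \eqref{eq:2} to verify, precisely because we never invoke $\star_\varphi$ or the torsion two-form. The only point demanding care is the sign bookkeeping when passing $e^7$ through two-forms; the negative sign in \eqref{eq:3} is exactly the one reproduced by the ansatz $\beta=\nu-\alpha\wedge e^7$ via $d(\alpha\wedge e^7)=d_{\mfh}\alpha\wedge e^7$, and the opposite placement $\beta=\nu+\alpha\wedge e^7$ would merely flip the sign of $\alpha$, which is immaterial to the existence assertion.
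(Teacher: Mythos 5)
Your proposal is correct and follows essentially the same route as the paper, which proves this theorem by exactly the component-matching computation under the splitting $\Lambda^\bullet\mfg^*=\Lambda^\bullet\mfh^*\oplus(\Lambda^{\bullet-1}\mfh^*\wedge e^7)$ that it carried out before Theorem \ref{th:eigenform} (there applied to $\tau$, here to an arbitrary primitive $\beta$). Your observations that unimodularity of $\mfh$ is not used and that the sign of $\alpha$ is immaterial are both accurate.
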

\section{Results in dimension six}\label{sec:resultsin6d}
From now on, we restrict to ourselves to a special class of Lie algebras:
\begin{definition}
	A Lie algebra $\mfg$ is called \emph{almost nilpotent} if it admits a codimension-one nilpotent ideal $\mfh$.
Note that then $\mfg\cong \mfh\rtimes_f \bR$ for a derivation $f\in \mathrm{Der}(\mfh)$, where $\mfh\rtimes_f \bR$ denotes the semi-direct product of $\bR$ with $\mfh$ and Lie algebra representation $\rho:\bR\rightarrow \mathrm{Der}(\mfh)$ of $\bR$ on $\mfh$ given by $\rho(t)=t f$ for all $t\in \bR$.
\end{definition}
In order to investigate the existence of exact $\G_2$-structures and closed $\G_2$-eigenforms on seven-dimensional almost nilpotent Lie algebras $\mfg$, we first have to determine which six-dimensional nilpotent Lie algebras admit exact $\SL(3,\bC)$-structures or half-flat $\SU(3)$-structures $(\omega,\rho)$ for which there exists a primitive $(1,1)$-form $\nu$ with $\rho=d\nu$.
\subsection{Exact $\mathrm{SL}(3,\bC)$-structures on nilpotent Lie algebras}
We start by determining the six-dimensional nilpotent Lie algebras $\mfh$ admitting an exact $\SL(3,\bC)$-structure $\rho\in  \Lambda^3\mfh^*$. To this aim,
we rephrase the condition of being exact in the following way:
\begin{proposition}\label{pro:exactrho}
Let $\mfh$ be a six-dimensional nilpotent Lie algebra. Then $\mfh$ admits an exact $\SL(3,\bC)$-structure $\rho\in \Lambda^3 \mfh^*$ if and only if there exist linear independent one-forms $\alpha_1,\alpha_2\in \mfh^*$ and two-forms $\omega_1,\omega_2\in \Lambda^2 \mfh^*$ with $\omega_i\wedge \omega_j=\delta_{ij} \omega_1^2$ for $i,j\in \{1,2\}$ such that $\ker(\omega_1)=\ker(\omega_2)$ is a complement of $\ker(\alpha_1)\cap \ker(\alpha_2)$ in $\mfh$ and such that either
\begin{itemize}
\item[(a)]
$\dim(\mathfrak{z}(\mfh))=1$, $\dim(\mfh_2)=2$, $\ker(\omega_1)=\ker(\omega_2)=\mfh_2$ and there exists a closed non-zero one-form $\gamma\in \mfh^*\setminus \{0\}$ with $\gamma(\mfh_2)=\{0\}$ such that
\begin{equation*}
d\alpha_1=\omega_1,\qquad d\alpha_2=\omega_2+\gamma\wedge \alpha_1
\end{equation*}
\item[(b)]
or $\dim(\mathfrak{z}(\mfh))=2$, $\ker(\omega_1)=\ker(\omega_2)=\mathfrak{z}(\mfh)$ and
\begin{equation*}
d\alpha_1=\omega_1,\qquad d\alpha_2=\omega_2.
\end{equation*}
\end{itemize}
In the first case, $\mfh_2$ is $J$-invariant and in the second case $\mathfrak{z}(\mfh)$ for the almost complex structure $J$ induced by $\rho$.
\end{proposition}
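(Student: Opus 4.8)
My plan is to work throughout with the almost complex structure $J=J_{\rho}$ and the contraction two-forms $v\hook\rho$, exploiting Lemma~\ref{le:SL3Clinearlydependent} to read off kernels and wedge relations. \emph{Forward direction.} Assume $\rho=d\sigma$ is an exact $\SL(3,\bC)$-structure. Since $\mfh$ is nilpotent, $\mathfrak{z}(\mfh)\neq\{0\}$; fix $0\neq v\in\mathfrak{z}(\mfh)$. The first step is to control the centre. For central $v,w$ the Chevalley--Eilenberg formula gives $\rho(v,w,\cdot)=d\sigma(v,w,\cdot)=0$, so by Lemma~\ref{le:SL3Clinearlydependent}(a) $v$ and $w$ are $\bC$-linearly dependent; hence $\mathfrak{z}(\mfh)\subseteq\spa{v,Jv}$ and $\dim\mathfrak{z}(\mfh)\in\{1,2\}$. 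Running the same computation with $v$ central and $X\in\mfh_2$ (so $[X,\mfh]\subseteq\mathfrak{z}(\mfh)\subseteq\spa{v}$) gives $\rho(v,X,\cdot)=0$, hence $\mfh_2\subseteq\spa{v,Jv}$; as $\mfh_2\supsetneq\mathfrak{z}(\mfh)$ for nilpotent $\mfh$, this forces $\mfh_2=\spa{v,Jv}$ when $\dim\mathfrak{z}(\mfh)=1$. This produces the dichotomy and the stated $J$-invariance of $\mathfrak{z}(\mfh)$ in case (b) and of $\mfh_2$ in case (a).

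Next I would produce the data. Put $v_1=v$, $v_2=Jv$ and $\omega_i:=v_i\hook\rho$; Lemma~\ref{le:SL3Clinearlydependent}(b) gives at once $\ker\omega_i=\spa{v,Jv}$ and $\omega_i\wedge\omega_j=\delta_{ij}\omega_1^2$. For the one-forms I use Cartan's identity $\mathcal{L}_X\beta=X\hook d\beta+d(X\hook\beta)$ together with $\mathcal{L}_v=0$ for central $v$: this yields $\omega_1=v\hook d\sigma=d\alpha_1$ with $\alpha_1:=-v\hook\sigma$, and in case (b) likewise $d\alpha_2=\omega_2$. In case (a) one has $[Jv,\cdot]=\gamma(\cdot)\,v$ for some $\gamma\in\mfh^*$, and $\mathcal{L}_{Jv}\sigma=\pm\gamma\wedge\alpha_1$ gives the corrected relation $d\alpha_2=\omega_2+\gamma\wedge\alpha_1$ (after fixing the sign of $\gamma$); the Jacobi identity with $v$ central shows $d\gamma=0$, while $\gamma(v)=\gamma(Jv)=0$ gives $\gamma(\mfh_2)=0$.

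\emph{Converse.} Given the data I simply set $\rho:=\alpha_2\wedge\omega_1-\alpha_1\wedge\omega_2$. A one-line computation shows $\rho=d(\alpha_1\wedge\alpha_2)$ in both cases---in case (a) the term $\alpha_1\wedge\gamma\wedge\alpha_1$ vanishes---so $\rho$ is exact by construction. That $\rho$ is an $\SL(3,\bC)$-structure is then linear algebra: the relations $\omega_i\wedge\omega_j=\delta_{ij}\omega_1^2$ make $\omega_1,\omega_2$ an orthogonal pair of equal-volume symplectic forms on the $4$-plane $\mathfrak{e}=\ker\alpha_1\cap\ker\alpha_2$, and the complement condition lets me choose a basis of $\ker\omega_1$ dual to $\alpha_1,\alpha_2$; a change of basis then matches $\rho$ to the model $\rho_0$, or equivalently one verifies $\lambda(\rho)<0$ via Lemma~\ref{le:quarticinvariant}. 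Reading off $J_\rho$ shows $\ker\omega_1=\spa{v,Jv}$ is $J$-invariant, giving the final two sentences.

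\emph{Main obstacle.} I expect the genuinely delicate point to be the complement condition in the forward direction, namely that $\alpha_1,\alpha_2$ restrict to linearly independent forms on $\spa{v,Jv}$, equivalently $\sigma(v,Jv)\neq0$. When $\spa{v,Jv}\cap[\mfh,\mfh]=\{0\}$ one can adjust the primitives $\alpha_i$ by closed one-forms to arrange this freely, but when $v,Jv\in[\mfh,\mfh]$ the scalar $\sigma(v,Jv)$ is independent of the chosen primitive and must be shown nonzero directly. For this I would use the reconstruction $\rho=v^*\wedge\omega_1+(Jv)^*\wedge\omega_2$ (valid because $J$-invariance of $\spa{v,Jv}$ kills the $\Lambda^3\mathfrak{e}^*$-component of the $(3,0)$-form), which turns the non-degeneracy $\lambda(\rho)<0$ into non-degeneracy of $\sigma$ on $\spa{v,Jv}$. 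This is precisely where the $\SL(3,\bC)$-hypothesis is consumed, and getting the signs and the primitive-independence right is the crux of the argument.
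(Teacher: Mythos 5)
Your plan coincides with the paper's proof in almost every step: the same use of Lemma~\ref{le:SL3Clinearlydependent}(a) to obtain $\dim\mathfrak{z}(\mfh)\in\{1,2\}$ and $\mfh_2=\spa{v,Jv}$ in the one-dimensional case, the same forms $\omega_i=v_i\hook\rho$ and primitives $\alpha_i=-v_i\hook\sigma$ (your Cartan-formula derivations of $d\alpha_1=\omega_1$ and $d\alpha_2=\omega_2+\gamma\wedge\alpha_1$ are just a repackaging of the paper's direct Chevalley--Eilenberg computations), and an identical converse via $\rho=\alpha_2\wedge\omega_1-\alpha_1\wedge\omega_2=d(\alpha_1\wedge\alpha_2)$. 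The one point where you depart from the paper is the complement condition, i.e.\ $\sigma(v,Jv)\neq0$, and there your sketch has a genuine gap.

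The reconstruction $\rho=v^*\wedge\omega_1+(Jv)^*\wedge\omega_2$ is a correct statement about $\rho$ alone; it contains no information about the primitive $\sigma$, so it cannot by itself ``turn $\lambda(\rho)<0$ into non-degeneracy of $\sigma$ on $\spa{v,Jv}$''. The link you need is the identity $\omega_1(Y,Z)=d\sigma(v,Y,Z)=\sigma(v,[Y,Z])$, valid for all $Y,Z$ because $v$ is central; to exploit it one must exhibit some $Y\notin\spa{v,Jv}$ with $[Y,\mfh]\subseteq\spa{v,Jv}$. For such a $Y$, writing $[Y,Z]=a(Z)v+b(Z)Jv$ gives $\omega_1(Y,\cdot)=\sigma(v,Jv)\,b(\cdot)$, while $Y\notin\ker\omega_1=\spa{v,Jv}$ forces $\omega_1(Y,\cdot)\neq0$, whence $\sigma(v,Jv)\neq0$. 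The missing idea is that such a $Y$ is supplied by the \emph{next} step of the ascending central series: $Y\in\mfh_2\setminus\mathfrak{z}(\mfh)$ in case (b), and $Y\in\mfh_3\setminus\mfh_2$ in case (a) where $\mfh_2=\spa{v,Jv}$; this is exactly what the paper does (phrased as: by Lemma~\ref{le:SL3Clinearlydependent}(a) there is $Z$ with $0\neq\rho(v,Y,Z)=\sigma(v,[Y,Z])$ and $[Y,Z]\in\spa{v,Jv}$). Your fallback case distinction on whether $\spa{v,Jv}\cap[\mfh,\mfh]=\{0\}$ is both unnecessary and problematic: in case (a) the one-dimensional centre is the last nonzero term of the descending central series, so $v\in[\mfh,\mfh]$ automatically, and replacing $\alpha_1$ by $\alpha_1+\theta$ with $\theta$ closed destroys the relation $d\alpha_2=\omega_2+\gamma\wedge\alpha_1$ unless $\gamma\wedge\theta=0$. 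Once this step is replaced by the central-series argument, the rest of your write-up goes through.
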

\begin{proof} {\it The forward implication.}
Assume that $\mfh$ admits an exact $\SL(3,\bC)$-structure $\rho$, i.e.\ $\rho=d\nu$ for some $\nu\in \Lambda^2 \mfh^*$. Let $X,Y\in \mathfrak{z}(\mfh)$. Then
\begin{equation*}
\rho(X,Y,Z)=d\nu(X,Y,Z)=0
\end{equation*}
for any $Z\in \mfh$, which implies that $X$ and $Y$ are $\bC$-linearly dependent by Lemma \ref{le:SL3Clinearlydependent} (a). So $\dim(\mathfrak{z}(\mfh))\in \{1,2\}$.

If $\dim(\mathfrak{z}(\mfh))=2$, $\mathfrak{z}(\mfh)$ is $J$-invariant, so we may choose a basis $X,JX$ of $\mathfrak{z}(\mfh)$.
By Lemma \ref{le:SL3Clinearlydependent} (b), $\omega_1:=\rho(X,\cdot,\cdot)$ and $\omega_2:=\rho(JX,\cdot,\cdot)$ have two-dimensional common kernel $\mathfrak{z}(\mfh)$ and fulfill
$\omega_i\wedge \omega_j=\delta_{ij} \omega_1^2$ for $i,j\in \{1,2\}$.

Moreover, setting $\alpha_1:=-\nu(X,\cdot)\in \mfh^*$ and $\alpha_2:=-\nu(JX,\cdot)\in \mfh^*$, we have
\begin{equation*}
d\alpha_1(Y,Z)=-\alpha_1([Y,Z])=-\nu([Y,Z],X)=d\nu(X,Y,Z)=\rho(X,Y,Z)=\omega_1(Y,Z)
\end{equation*}
for all $Y,Z\in \mfh$, i.e.\ $d\alpha_1=\omega_1$. In the same way, one obtains $d\alpha_2=\omega_2$, which then also shows that $\alpha_1$
and $\alpha_2$ are linearly independent. Now choose $Y\in \mfh_2$ linearly independent of $X$ and $JX$. By Lemma \ref{le:SL3Clinearlydependent} (a), there exists $Z\in \mfh$
with $\rho(X,Y,Z)\neq 0$ and so
\begin{equation*}
0\neq \rho(X,Y,Z)=\nu(X,[Y,Z]).
\end{equation*}
Since $[Y,Z]\in \mathfrak{z}(\mfh)=\spa{X,JX}$, this shows $\alpha_1(JX)=-\alpha_2(X)=\nu(X,JX)\neq 0$. Thus $\ker(\alpha_1)\cap\ker(\alpha_2)$ is
complementary to $\ker(\omega_1)=\ker(\omega_2)=\mathfrak{z}(\mfh)$.

Next, consider the case $\dim(\mathfrak{z}(\mfh))=1$ and choose $X\in \mathfrak{z}(\mfh)$ and $Y\in \mfh_2$ linearly independent. Then we have
\begin{equation*}
\rho(X,Y,Z)=d\nu(X,Y,Z)=\nu(X,[Y,Z])=0
\end{equation*}
for any $Z\in \mfh$, i.e. $X$ and $Y$ are $\bC$-linearly dependent by Lemma \ref{le:SL3Clinearlydependent} (a). Hence $\dim(\mfh_2)=2$ and $\mfh_2$ is $J$-invariant.

Choose a basis $X,JX$ of $\mfh_2$ such that $X\in \mathfrak{z}(\mfh)$ and set again $\omega_1:=\rho(X,\cdot,\cdot)$, $\omega_2:=\rho(JX,\cdot,\cdot)$,
$\alpha_1:=-\nu(X,\cdot)$ and $\alpha_2:=-\nu(JX,\cdot)$. As in the case $\dim(\mathfrak{z}(\mfh))=2$, we get $\ker(\omega_1)=\ker(\omega_2)=\spa{X,JX}=\mfh_2$, $\omega_i\wedge \omega_j=\delta_{ij} \omega_1^2$ for $i,j=1,2$ and $d\alpha_1=\omega_1$.

Next, let $Y\in \mfh_3$ linearly independent of $X$
and $JX$. By Lemma \ref{le:SL3Clinearlydependent} (a), we again have some $Z\in \mfh$ with $0\neq \rho(X,Y,Z)=\nu(X,[Y,Z])$ and from $[Y,Z]\in \mfh_2=\spa{X,JX}$ we get again that
$\alpha_1(JX)=-\alpha_2(X)=\nu(X,JX)\neq 0$, i.e.\ that $\ker(\alpha_1)\cap\ker(\alpha_2)$ is complementary to $\ker(\omega_1)=\ker(\omega_2)=\mathfrak{z}(\mfh)$.
So we finally have to prove the equation for $d\alpha_2$ in this case. Thereto, let $\gamma\in \mfh^*\setminus \{0\}$ be the one-form uniquely
defined by $[JX,Y]=-\gamma(Y)X$ for all $Z\in \mfh$. Obviously, $\gamma(X)=\gamma(JX)=0$, i.e.\ $\gamma(\mfh_2)=\{0\}$. Moreover, $d\gamma=0$ as
\begin{equation*}
\begin{split}
d\gamma(Z,W)X=-\gamma([Z,W])X=[JX,[Z,W]]&=[Z,[W,JX]]+[W,[JX,Z]]\\
&=-\gamma(W)[Z,X]+\gamma(Z)[W,X]\\&=0
\end{split}
\end{equation*}
for all $Z,W\in \mfh$. Furthermore,
\begin{equation*}
\begin{split}
d\alpha_2(Y,Z)=-\alpha_2([Y,Z])&=-\nu([Y,Z],JX)\\
&=d\nu(JX,Y,Z)+\nu([JX,Y],Z)-\nu([JX,Z],Y)\\
&=\rho(JX,Y,Z)-\nu(\gamma(Y)X,Z)+\nu(\gamma(Z)X,Y)\\
&=\omega_2(Y,Z)+\gamma(Y)\alpha_1(Z)-\gamma(Z)\alpha_1(Y)\\
&=\omega_2(Y,Z)+(\gamma\wedge \alpha_1)(Y,Z)
\end{split}
\end{equation*}
as claimed.

\noindent{\it The backwards implication.}
Assume that there exist linear independent one-forms $\alpha_1,\alpha_2\in \mfh^*$ and two-forms $\omega_1,\omega_2\in \Lambda^2 \mfh^*$ as in the statement. Note that $\dim(\ker(\alpha_1)\cap \ker(\alpha_2))=4$ since $\alpha_1,\alpha_2$ are linearly independent. Consequently, $\dim(\ker(\omega_1))=\dim(\ker(\omega_2))=2$ and so $\omega_1,\omega_2$ are non-degenerate two-forms
on $V:=ker(\alpha_1)\cap \ker(\alpha_2)$ satisfying $\omega_i\wedge \omega_j=\delta_{ij}\omega_1^2$
for $i,j=1,2$ and $\omega_1^2\neq 0$. Then it is well-known that there is a basis
$(v_1,\ldots,v_4)$ such with respect to the dual basis $(v^1,\ldots,v^4)$ we have
\begin{equation*}
\omega_1=v^{12}+v^{34},\qquad \omega_2=v^{13}-v^{24},
\end{equation*}
cf., e.g., the proof of Lemma 2.2 in \cite{FY}. Consider $(v^1,\ldots,v^4)$ as one-forms on $\mfh$ by identifying $V^*$ with the annihilator of $\ker(\omega_1)=\ker(\omega_2)$ and set
\begin{equation*}
\rho:=\alpha_2\wedge \omega_1-\alpha_1\wedge \omega_2=-\alpha^1\wedge v^{13}+\alpha^1\wedge v^{24}+\alpha^2\wedge v^{12}+\alpha^2\wedge v^{34}
\end{equation*}
$\rho$ is an $\SL(3,\bC)$-structure on $\mfh^*$ since an adapted basis is given by
$(\alpha_1,\alpha_2,v^3,v^2,v^1,-v^4)$. Moreover, $\rho$ is exact since
$\nu:=\alpha^{12}\in \Lambda^2 \mfh^*$ satisfies $d\nu=d\alpha^1\wedge \alpha^2-\alpha^1\wedge d\alpha^2=\alpha_2\wedge \omega_1-\alpha_1\wedge \omega_2=\rho$ in both cases.
\end{proof}
There are $34$ (isomorphism classes of) real six-dimensional nilpotent Lie algebras. Of these,
exactly those five admits an exact $\SL(3,\bC)$-structures which are listed in Table \ref{table:1}. The notation for these Lie algebras is obtained by numbering the $34$ six-dimensional nilpotent Lie algebras
from $\mfn_1$ to $\mfn_{34}$ in the order in which they occur in Table A.1 in \cite{Sa}.
\begin{corollary}\label{co:exactrho}
Let $\mfh$ be a six-dimensional nilpotent Lie algebra. Then $\mfh$ admits an exact $\SL(3,\bC)$-structure if and only if $\mfh$ is one of the five Lie algebras listed in Table \ref{table:1}.
\end{corollary}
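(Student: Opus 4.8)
The plan is to deduce Corollary~\ref{co:exactrho} from Proposition~\ref{pro:exactrho} by a systematic check over the list of six-dimensional nilpotent Lie algebras. Proposition~\ref{pro:exactrho} tells us that a necessary condition for an exact $\SL(3,\bC)$-structure is that $\dim(\mathfrak{z}(\mfh))\in\{1,2\}$, and in the case $\dim(\mathfrak{z}(\mfh))=1$ we additionally need $\dim(\mfh_2)=2$. So the first step is to discard from the $34$ nilpotent Lie algebras all those with $\dim(\mathfrak{z}(\mfh))\geq 3$, as well as those with $\dim(\mathfrak{z}(\mfh))=1$ but $\dim(\mfh_2)\neq 2$. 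This purely dimension-theoretic sieve, read off from Table~A.1 in \cite{Sa}, should eliminate the large majority of the candidates and leave only a short list to examine by hand.

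For the surviving candidates I would verify the full condition of Proposition~\ref{pro:exactrho} directly. In each case this means exhibiting (or ruling out) linearly independent one-forms $\alpha_1,\alpha_2$ and two-forms $\omega_1,\omega_2$ with $\omega_i\wedge\omega_j=\delta_{ij}\omega_1^2$, whose common kernel is $\mathfrak{z}(\mfh)$ or $\mfh_2$ as appropriate and is complementary to $\ker(\alpha_1)\cap\ker(\alpha_2)$, together with the structure equations $d\alpha_1=\omega_1$ and either $d\alpha_2=\omega_2$ (in the $\dim\mathfrak{z}=2$ case) or $d\alpha_2=\omega_2+\gamma\wedge\alpha_1$ for a suitable closed $\gamma$ vanishing on $\mfh_2$ (in the $\dim\mathfrak{z}=1$ case). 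For the five algebras claimed to work, the construction at the end of the proof of Proposition~\ref{pro:exactrho} produces the explicit exact $\SL(3,\bC)$-structure $\rho=\alpha_2\wedge\omega_1-\alpha_1\wedge\omega_2=d(\alpha_1\wedge\alpha_2)$, and it suffices to display the requisite forms in terms of the structure constants and confirm $\lambda(\rho)<0$ via Lemma~\ref{le:quarticinvariant}. For the remaining candidates that survive the dimension sieve but nonetheless admit no exact structure, one must show that no admissible $(\alpha_1,\alpha_2,\omega_1,\omega_2)$ can satisfy the structure equations.

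The main obstacle I expect is precisely these negative cases: proving \emph{non-existence} for the algebras that pass the dimension test yet still fail. There the argument cannot merely invoke the dimensions of the centre and of $\mfh_2$; instead one must analyse the image of the differential $d\colon\mfh^*\to\Lambda^2\mfh^*$ and argue that the required closedness relation $d\alpha_1=\omega_1$, $d\alpha_2=\omega_2$ (up to the $\gamma\wedge\alpha_1$ correction) forces $\omega_1,\omega_2$ into a subspace of $\Lambda^2\mfh^*$ on which the compatibility conditions $\omega_i\wedge\omega_j=\delta_{ij}\omega_1^2$ with $\omega_1^2\neq 0$ cannot hold. Concretely, since $\alpha_1,\alpha_2$ must be chosen so that their exterior derivatives are non-degenerate on the complement $V=\ker(\alpha_1)\cap\ker(\alpha_2)$, the obstruction typically arises from the descending central series: if $[\mfh,\mfh]$ is too small or sits the wrong way relative to $\mathfrak{z}(\mfh)$, then $d\alpha_1$ and $d\alpha_2$ cannot span a symplectic-type pair with the correct kernel. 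Working through this for each borderline algebra is the crux, and it is here that the explicit structure constants of Table~A.1 in \cite{Sa} are indispensable.
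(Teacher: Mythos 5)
Your proposal correctly identifies Proposition \ref{pro:exactrho} as the engine and correctly records the necessary conditions ($\dim\mathfrak{z}(\mfh)\in\{1,2\}$, with $\dim(\mfh_2)=2$ when the centre is one-dimensional), but as written it is a plan rather than a proof: the step you yourself flag as ``the crux'' --- ruling out the algebras that pass the dimension sieve yet admit no exact $\SL(3,\bC)$-structure --- is never carried out, and the sieve leaves far more than a handful of candidates among the $34$ algebras (many six-dimensional nilpotent Lie algebras have two-dimensional centre, and several filiform ones have $\dim\mathfrak{z}=1$, $\dim\mfh_2=2$). Gesturing at ``the image of $d:\mfh^*\to\Lambda^2\mfh^*$'' does not tell you how to exclude any particular algebra, so the negative half of the equivalence is missing.

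The idea you are missing, and which the paper uses to avoid the $34$-case enumeration entirely, is to pass to a four-dimensional quotient. In case (b) the closed forms $\omega_1,\omega_2$ have common kernel $\mathfrak{z}(\mfh)$ and therefore descend to $\mfh/\mathfrak{z}(\mfh)$; in case (a) they descend to $\mfh/\mfh_2$ with $d\omega_1=0$ and $d\omega_2=\gamma\wedge\omega_1\neq 0$. There are only three four-dimensional nilpotent Lie algebras, and the requirement that the quotient carry a pair $\omega_1,\omega_2$ with $\omega_i\wedge\omega_j=\delta_{ij}\omega_1^2$ (closed in case (b), with the prescribed non-closedness of $\omega_2$ in case (a)) immediately cuts the possibilities to two quotients in each case. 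One then normalises $\omega_1,\omega_2$ by explicit automorphisms of the quotient (using the classification of symplectic forms from \cite{Ov}) and \emph{reconstructs} $\mfh$ as the extension determined by $d\alpha_1=\omega_1$ and $d\alpha_2=\omega_2$ (resp.\ $\omega_2+\gamma\wedge\alpha_1$); the structure constants that emerge are exactly those of $\mfn_{28},\mfn_{18}$ (case (b)) and $\mfn_9,\mfn_1,\mfn_4$ (case (a)). This simultaneously proves that no other algebra can occur and that all five do occur, so no separate non-existence arguments are needed. Your enumeration strategy could in principle be pushed through, but without the quotient/reconstruction step it requires a bespoke obstruction argument for each surviving candidate, none of which your proposal supplies.
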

\begin{table}[h]
	\centering
	\begin{tabular}{@{\vrule width 0pt height 2.5ex depth
				1.5ex}*{4}{l}@{}}
		\toprule
		$\mfg$ & differentials \\
		\midrule
		$\mfn_1$ & $(0,0,12,13,14+23,34-25)$  \\
		$\mfn_4$ & $(0,0,12,13,14+23,24+15)$  \\
		$\mfn_9$ & $(0,0,0,12,14-23,15+34)$  \\
		$\mfn_{18}$ &  $(0,0,0,12,13+42,14+23)$  \\
		$\mfn_{28}$ & $(0,0,0,0,13+42,14+23)$  \\
		\bottomrule
	\end{tabular}
	\bigskip
	\caption{Six-dimensional nilpotent Lie algebras admitting an exact $\SL(3,\bC)$-structure}
	\label{table:1}
\end{table}

\begin{proof}
By Proposition \ref{pro:exactrho}, we either have $\dim(\mathfrak{z}(\mfh))=2$ or $\dim(\mathfrak{z}(\mfh))=1$ and $\dim(\mfh_2)=2$.

Let us first assume that $\dim(\mathfrak{z}(\mfh))=2$. By Proposition \ref{pro:exactrho}, there are closed two-forms
$\omega_1,\omega_2\in \Lambda^2 \mfh^*$ with common kernel $\ker(\omega_1)=\ker(\omega_2)=\mathfrak{z}(\mfh)$ such that $\omega_i\wedge \omega_j=\delta_{ij} \omega_1^2$. Moreover, there are linearly independent one-forms $\alpha_1,\alpha_2\in \mfh^*$ such that $d\alpha_i=\omega_i$ for $i=1,2$ and such that $\ker(\alpha_1)\cap \ker(\alpha_2)$ is complementary to $\mathfrak{z}(\mfh)$. Now $\mfh/\mathfrak{z}(\mfh)$ is a four-dimensional nilpotent Lie algebra and $\omega_1,\omega_2$ descend to closed two-forms on $\mfh/\mathfrak{z}(\mfh)$, again called $\omega_1,\omega_2$. 
It is well known, see e.g.\ \cite{Ov}, that there are exactly three four-dimensional nilpotent Lie algebras,
namely $(0,0,0,0)$, $(0,0,12,0)$ and $(0,0,12,13)$. One easily checks that only $(0,0,0,0)$ and $(0,0,12,0)$ admit closed two-forms $\omega_1,\omega_2$ with $\omega_i\wedge \omega_j=\delta_{ij} \omega_1^2$. 

If $\mfh/\mathfrak{z}(\mfh)\cong (0,0,0,0)$, then one may choose (cf. the proof of Proposition \ref{pro:exactrho}) a basis $e^1,\ldots,e^4$ of the dual space of $(0,0,0,0)$ such that $\omega_1=e^{13}-e^{24}$ and $\omega_2=e^{14}+e^{23}$. We may extend this basis to a basis $e^1,\ldots,e^6$ of $\mfh^*$ by $e^5:=\alpha^1$ and $e^6:=\alpha^2$
and so $\mfh\cong (0,0,0,0, 13+42,14+23)=\mfn_{28}$.

If $\mfh/\mathfrak{z}(\mfh)\cong (0,0,12,0)$, then $\omega_1$ is a symplectic form on $(0,0,12,0)$ and so symplectomorphic to
$e^{14}+e^{23}$ by \cite{Ov}, i.e.\ we may assume that $\omega_1=e^{14}+e^{23}$. Then, since $\omega_2$ is closed, $\omega_1\wedge \omega_2=0$ and $\omega_2^2=\omega_1^2$, one checks that $\omega_2=a (e^{14}-e^{23})+ b_1 e^{13}-b_2 e^{24}+ c e^{12}$ for certain $a,b_1,b_2,c\in \bR$ with $b_1b_2-a^2=1$. But so
\begin{equation*}
f:=\begin{pmatrix} 
1 & \tfrac{a}{b_1} &  0 & 0 \\
0 & 1  & 0 & 0 \\
0 & 0 & 1 & -\tfrac{a}{b_1} \\
-\tfrac{c}{b_2} & -\tfrac{ a c}{b_1 b_2} & 0 & 1
\end{pmatrix}
\end{equation*}
 is an automorphism of
$(0,0,12,0)$ with $f^*\omega_1=\omega_1$ and $f^*\omega_2= b_1 e^{13}-\frac{1}{b_1} e^{24}$. Next,
$$g:=\diag\big(b_1^{-\tfrac{2}{3}},b_1^{\tfrac{1}{3}},b_1^{-\tfrac{1}{3}},b_1^{\tfrac{2}{3}}\big)$$
fulfills $g^*f^*\omega_1=\omega_1$ and $g^*f^*\omega_2=e^{13}-e^{24}$. Extending $e^1,\ldots,e^4$ to a basis $e^1,\ldots,e^6$ by setting
$e^5:=\alpha_1$, $e^6:=\alpha_2$, we do get $\mfh\cong (0,0,12,0,14+23,13+42)\cong (0,0,0,12,13+42,14+23)=\mfn_{18}$, where the latter isomorphism $F$ is, e.g., given by the one with by $F(e_1)=-e_2$, $F(e_2)=e_1$, $F(e_3)=e_4$, $F(e_4)=-e_3$, $F(e_5)=e_6$ and $F(e_6)=e_5$.

Next, let $\dim(\mathfrak{z}(\mfh))=1$ and $\dim(\mfh_2)=2$. By Proposition \ref{pro:exactrho}, there are two-forms
$\omega_1,\omega_2\in \Lambda^2 \mfh^*$ with common kernel $\ker(\omega_1)=\ker(\omega_2)=\mfh_2$ such that $\omega_i\wedge \omega_j=\delta_{ij} \omega_1^2$. Moreover, there are linearly independent one-forms $\alpha_1,\alpha_2\in \mfh^*$ and $\gamma\in \mfh^*\setminus \{0\}$ closed with $\gamma(\mfh_2)=\{0\}$ such that $d\alpha_1=\omega_1$, $d\alpha_2=\omega_2+\gamma\wedge \alpha_1$ and such that $\ker(\alpha_1)\cap \ker(\alpha_2)$ is complementary to $\mathfrak{z}(\mfh)$. Note that then $\omega_1$ is closed and
\begin{equation*}
d\omega_2=\gamma\wedge \omega_1.
\end{equation*}
Hence, $\mfh_2\hook d\omega_2=0$ and so $\omega_1,\omega_2,\gamma$ descend to forms on $\mfa:=\mfh/\mfh_2$ with $d\omega_1=0$ and $d\omega_2=\gamma\wedge \omega_1$. Since $d\omega_2\neq 0$ on $\mfa$, $\mfa$ cannot be Abelian and we must either have
$\mfa\cong (0,0,12,0)$ or $\mfa\cong (0,0,12,13)$.

Let us first assume that $\mfa\cong (0,0,12,0)$. By the results in \cite{Ov}, all symplectic forms on $\mfa$ are symplectomorphic to each other. Hence, we may assume that $\omega_1= e^{13}-e^{24}$. Then $\omega_2= a_1 e^{12}+a_2 e^{34}+ b (e^{13}+ e^{24})+ c_1 e^{14}+ c_2 e^{23}$ for certain $a_1,a_2,b,c_1,c_2\in \bR$ with $a_1 a_2+c_1 c_2-b^2=1$. We must have $a_2\neq 0$ as otherwise $d\omega_2=0$, a contradiction. But so the automorphism 
\begin{equation*}
\begin{pmatrix} 
-\tfrac{1}{a_2} & 0 & 0 & 0 \\
0 & a_2^2 & 0 & 0 \\
\tfrac{c_1}{a_2^2} & - b a_2 & -a_2 & 0 \\
-\tfrac{b}{a_2^2} & c_2 a_2 & 0 & \tfrac{1}{a_2^2}
\end{pmatrix}
\end{equation*}
of $(0,0,12,0)$ is well-defined. That automorphism fixes $\omega_1$ and transforms $\omega_2$ into $-e^{12}-e^{34}$. Hence, we may assume that $\omega_1=e^{13}-e^{24}$ and that $\omega_2=-e^{12}-e^{34}$. Then $d\omega_2=-e^{124}=e^1\wedge \omega_1$, i.e.\ $\gamma=e^1$. Thus, extending $e^1,\ldots,e^4$ to a basis $e^1,\ldots,e^6$ of $\mfh^*$ by $e^5:=\alpha_1$ and
$e^6:=\alpha_2+e^3$, we have $\mfh=(0,0,12,0,13-24,15-34)\cong (0,0,0,12,14-23,15+34)=\mfn_9$, where the latter isomorphism fixes $e^i$
for $i\notin \{3,4\}$ and interchanges $e^3$ and $e^4$.

Next, let us consider the case $\mfa\cong (0,0,12,13)$. By \cite{Ov}, all symplectic two-forms on $\mfa$ are symplectomorphic. Thus, we may assume that $\omega_1=e^{14}+e^{23}$. Then $\omega_2=a_1 e^{12}+a_2 e^{34}+ b_1 e^{13}-b_2 e^{24}+ c (e^{14}-e^{23})$ for certain $a_1,a_2,b_1,b_2,c\in \bR$ with $a_1 a_2+ b_1 b_2-c^2=1$.

Let us first assume that $a_2\neq 0$. Then
\begin{equation*}
\begin{pmatrix} 
1 & 0 & 0 & 0 \\
-\tfrac{b_2}{a_2} & 1 & 0 & 0 \\
-\tfrac{a_2 c+ b_2^2}{a_2^2} & \tfrac{b_2}{a_2} & 1 & 0 \\
\tfrac{a_2 b_1+b_2 c}{a_2^2} & -\tfrac{c}{a_2}& \tfrac{b_2}{a_2} & 1
\end{pmatrix}
\end{equation*}
is an automorphism of $(0,0,12,13)$ which fixes $\omega_1$ and maps $\omega_2$ to $\frac{1}{a_2}e^{12}+a_2 e^{34}$, i.e.\ may assume that $\omega=\frac{1}{a_2}e^{12}+a_2 e^{34}$. Then $d\omega_2=a_2 e^{124}=-a_2 e^2\wedge \omega_1$, i.e.\ $\gamma=-a_2e^2$.
Hence, extending $e^1,\ldots,e^4$ to a basis $e^1,\ldots,e^6$ of $\mfh^*$ by $e^5:=\alpha_1$ and $e^6:=\frac{1}{a_2}(\alpha_2-\frac{1}{a_2}e^3)$, we have $\mfh=(0,0,12,13,14+23,34-25)=\mfn_1$.

Finally, we consider the case $a_2=0$. Then $b_1 b_2-c^2=1$ and so, in particular, $b_2\neq 0$. Thus,
\begin{equation*}
\begin{pmatrix}
1 & 0 & 0 & 0 \\
\tfrac{c}{b_2} & 1 & 0 & 0 \\
0 & -\tfrac{c}{b_2} & 1 & 0 \\
\tfrac{-a_1 b_2^2+c}{b_2^3}& \tfrac{c^2}{b_2^2} & -\tfrac{c}{b_2} & 1
\end{pmatrix}
\end{equation*}
is a well-defined automorphism of $(0,0,12,13)$ which fixes $\omega_1$ and maps $\omega_2$ to $\tfrac{1}{b_2} e^{13}-b_2 e^{24}$. Hence, we may assume that $\omega_2=\tfrac{1}{b_2} e^{13}-b_2 e^{24}$ and then $\gamma=-b_2 e^1$
as $d\omega_2=-b_2 e^{123}= -b_2 e^1\wedge \omega_1$. Thus, extending $e^1,\ldots,e^4$ to a basis $e^1,\ldots,e^6$ of $\mfh^*$ by $e^5:=\alpha_1$ and $e^6:=-\frac{1}{b_2}\left(\alpha_2-\frac{1}{b_2}e^4\right)$, we have $\mfh=(0,0,12,13,14+23,24+15)=\mfn_4$.

Conversely, the existence of forms as in Proposition \ref{pro:exactrho} follows from the discussion above on any of the Lie algebras $\mfn_1,\mfn_4,\mfn_9,\mfn_{18}$ and $\mfn_{28}$.
\end{proof}

\subsection{Half-flat $\SU(3)$-structures $(\omega,\rho)$ with exact $\rho$}
Here, we determine the six-dimensional nilpotent Lie algebras which admit a half-flat $\SU(3)$-structure $(\omega,\rho)$ for which $\rho=d\nu$ for a primitive $(1,1)$-form $\nu$.
In fact, we will determine all nilpotent Lie algebras which admit a half-flat $\SU(3)$-structure $(\omega,\rho)$ with exact $\rho$ and show that these are the same for which $\rho=d\nu$ with a primitive $(1,1)$-form $\nu$.

For this, note that by Corollary \ref{co:exactrho}, only $\mfn_1$, $\mfn_4$, $\mfn_9$, $\mfn_{18}$, $\mfn_{28}$ may admit a half-flat $\SU(3)$-structure $(\omega,\rho)$ with exact $\rho$. Now Conti determined the six-dimensional nilpotent Lie algebras admitting a half-flat $\SU(3)$-structures in \cite{C} and his results reduce the possible cases to $\mfn_4$, $\mfn_9$, $\mfn_{28}$. We will show that $\mfn_4$ cannot admit a half-flat $\SU(3)$-structure $(\omega,\rho)$ with exact $\rho$ and for the proof we use the following obstruction by Schulte-Hengesbach and the first author \cite{FrSH} adapted to our setting. Note that this obstruction is a refinement of one used by Conti in \cite{C}:
\begin{lemma}\label{le:obstructionhalflat}
  Let $\mfh$ be a six-dimensional Lie algebra and $\nu
  \in \Lambda^6\mfh^*\setminus \{0\}$. If there is a non-zero one-form $\alpha \in\mfh^*$ satisfying
  \begin{equation}
    \label{tildeJ:1}
    \alpha \wedge \tilde J^*_\tau \alpha \wedge \sigma = 0
  \end{equation} 
  for all exact three-forms $\tau \in \Lambda^3 \mfh^*$ and all closed four-forms $\sigma \in \Lambda^4 \mfh^*$, where $\tilde
  J_\tau^*\alpha$ is defined for $X \in \mfh^*$ by
  \begin{eqnarray}
    \label{tildeJ:2}
    \tilde J_{\tau}^* \alpha (X) \,\nu &=& 
    \alpha \wedge (X \hook \tau) \wedge \tau,
  \end{eqnarray}
  then $\mfg$ does not admit a half-flat $\SU(3)$-structure $(\omega,\rho)\in\Lambda^2 \mfh^*\times \Lambda^3 \mfh^*$ with exact $\rho$.
\end{lemma}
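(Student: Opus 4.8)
The plan is to argue by contradiction. Suppose, contrary to the claim, that $\mfh$ admits a half-flat $\SU(3)$-structure $(\omega,\rho)$ with $\rho$ exact, and let $\alpha\in\mfh^*\setminus\{0\}$ be a one-form satisfying \eqref{tildeJ:1} for all exact $\tau$ and all closed $\sigma$. The key observation is that we may feed the structure's own data into this hypothesis: the three-form $\rho$ is exact by assumption, while $\omega^2$ is closed because $(\omega,\rho)$ is half-flat (the condition $d\omega^2=0$ being part of the definition). Taking $\tau=\rho$ and $\sigma=\omega^2$ in \eqref{tildeJ:1} therefore yields
\begin{equation*}
\alpha\wedge \tilde J^*_\rho\alpha\wedge \omega^2=0.
\end{equation*}

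The heart of the argument is to recognise $\tilde J^*_\rho$ as a nonzero multiple of the complex structure $J_\rho$ induced by the $\SL(3,\bC)$-structure $\rho$. Unwinding \eqref{tildeJ:2} with the definition of $\kappa$ and of $K_\rho$, and using the elementary identity $\alpha\wedge(v\hook\nu)=\alpha(v)\,\nu$ for $v\in\mfh$, one sees that for $X\in\mfh$
\begin{equation*}
\tilde J^*_\rho\alpha(X)\,\nu=\alpha\wedge(X\hook\rho)\wedge\rho=\alpha\bigl(K_\rho X\bigr),
\end{equation*}
so that $\tilde J^*_\rho$ is obtained by contracting $\alpha$ into the endomorphism part of Hitchin's operator $K_\rho$. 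Since $\rho$ is a genuine $\SL(3,\bC)$-structure we have $\lambda(\rho)<0$ by Lemma \ref{le:quarticinvariant}, whence $K_\rho=\sqrt{-\lambda(\rho)}\,J_\rho$ with $\sqrt{-\lambda(\rho)}\in\Lambda^6\mfh^*\setminus\{0\}$; writing $\sqrt{-\lambda(\rho)}=c\,\nu$ with $c\in\bR\setminus\{0\}$ gives $\tilde J^*_\rho\alpha=c\,J^*_\rho\alpha$. As $c\neq0$, the displayed vanishing above forces
\begin{equation*}
\alpha\wedge J^*_\rho\alpha\wedge\omega^2=0.
\end{equation*}

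It then remains to show that this last equation is impossible for $\alpha\neq0$, which supplies the contradiction. Here I would use that $\SU(3)$ acts transitively on the unit sphere of $\mfh^*$ and preserves $\omega$, $\rho$ and $J_\rho$, hence the whole expression $\alpha\mapsto\alpha\wedge J^*_\rho\alpha\wedge\omega^2$: after applying a suitable element of $\SU(3)$ and rescaling we may assume $\alpha=f^1$ for an adapted basis $(f^1,\dots,f^6)$ with $\omega=f^{12}+f^{34}+f^{56}$ and $J^*_\rho f^1=\pm f^2$. A direct computation then gives $\alpha\wedge J^*_\rho\alpha\wedge\omega^2=\pm f^{12}\wedge\omega^2=\pm2\,f^{123456}\neq0$, contradicting the vanishing just obtained. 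Therefore no half-flat $(\omega,\rho)$ with exact $\rho$ can exist on $\mfh$.

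I expect the only genuinely delicate point to be the middle step: correctly matching the abstract operator $\tilde J^*_\tau$ of \eqref{tildeJ:2}, evaluated at $\tau=\rho$, with the geometric $J^*_\rho$, and in particular tracking the volume normalisation so as to guarantee that the proportionality constant $c$ is nonzero. Its sign is irrelevant, since only $c\neq0$ is used, which is exactly what the strict inequality $\lambda(\rho)<0$ provides. The two flanking steps—invoking the half-flat condition to license $\sigma=\omega^2$, and the adapted-basis computation showing $\alpha\wedge J^*_\rho\alpha\wedge\omega^2\neq0$—are then essentially bookkeeping.
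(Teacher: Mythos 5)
Your proof is correct, and it is exactly the argument behind the obstruction as it appears in \cite{FrSH} and \cite{C}: the paper itself omits the proof and simply cites those references. The two key points — that half-flatness licenses the substitutions $\tau=\rho$, $\sigma=\omega^2$, and that Hitchin's relation $K_\rho=\sqrt{-\lambda(\rho)}\,J_\rho$ identifies $\tilde J^*_\rho\alpha$ with a non-zero multiple of $J^*_\rho\alpha$ so that the conclusion contradicts $\alpha\wedge J^*_\rho\alpha\wedge\omega^2=\tfrac{1}{3}\left|\alpha\right|^2\omega^3\neq0$ — are both handled correctly, including the volume normalisation via $\sqrt{-\lambda(\rho)}=c\,\nu$ with $c\neq0$.
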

This allows us now to prove:
\begin{theorem}\label{th:halfflatexactrho}
Let $\mfh$ be a six-dimensional nilpotent Lie algebra. Then $\mfh$ admits a half-flat $\SU(3)$-structure $(\omega,\rho)\in \Lambda^2 \mfh^*\times\Lambda^3\mfh^*$ with exact $\rho$ if and only if $\mfh$ is isomorphic to $\mfn_9$ or $\mfn_{28}$. In these cases, $\mfh$ also admits a half-flat $\SU(3)$-structure $(\tilde{\omega},\tilde{\rho})\in \Lambda^2 \mfh^*\times\Lambda^3\mfh^*$  with $\tilde{\rho}=d\nu$ for some primitive $(1,1)$-form $\nu\in [\Lambda^{1,1}_0 \mfh^*]$.
\end{theorem}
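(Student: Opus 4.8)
The plan is to combine the two six-dimensional classifications already available with one non-existence argument and two explicit constructions. For the necessity, observe that if $(\omega,\rho)$ is a half-flat $\SU(3)$-structure with $\rho$ exact, then $\rho$ is in particular an exact $\SL(3,\bC)$-structure, so Corollary~\ref{co:exactrho} forces $\mfh\in\{\mfn_1,\mfn_4,\mfn_9,\mfn_{18},\mfn_{28}\}$; Conti's classification \cite{C} of nilpotent Lie algebras carrying a half-flat structure then discards $\mfn_1$ and $\mfn_{18}$. Thus it remains to exclude $\mfn_4$ and to produce the asserted structures on $\mfn_9$ and $\mfn_{28}$.

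To exclude $\mfn_4$ I would apply the obstruction Lemma~\ref{le:obstructionhalflat} with the one-form $\alpha=e^1$, so that the task becomes showing $e^1\wedge\tilde J^*_\tau e^1\wedge\sigma=0$ for every exact three-form $\tau$ and every closed four-form $\sigma$. The first step is a reduction: since $\mfh$ is nilpotent, hence unimodular, $d$ annihilates $\Lambda^5\mfh^*$, so for any one-form $\lambda$ and closed $\sigma$ one has $d\lambda\wedge\sigma=d(\lambda\wedge\sigma)=0$; that is, every exact two-form wedges to zero with every closed four-form. It therefore suffices to prove that $e^1\wedge\tilde J^*_\tau e^1$ is \emph{exact} for all exact $\tau$. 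Writing $\gamma:=\tilde J^*_\tau e^1$, the two-form $e^1\wedge\gamma=\sum_{k\ge 2}\gamma(e_k)\,e^{1k}$ lies in $d\mfn_4^*=\spa{e^{12},e^{13},e^{14}+e^{23},e^{24}+e^{15}}$ precisely when $\gamma(e_4)=\gamma(e_5)=\gamma(e_6)=0$. Here the decisive structural fact is that $d$ on $\mfn_4^*$ is built from $de^3=e^{12}$, $de^4=e^{13}$, $de^5=e^{14}+e^{23}$, $de^6=e^{24}+e^{15}$, so every exact form on $\mfn_4$ is a sum of monomials each divisible by $e^1$ or $e^2$. Using this in the contraction \eqref{tildeJ:2}: for $k\in\{4,5,6\}$ the part of $e^1\wedge(e_k\hook\tau)$ surviving the factor $e^1$ comes from $\tau$-monomials containing $k$ but not $1$, which therefore contain $2$, so these contributions are monomials $e^{1ab}$ with $2\in\{a,b\}$; wedging such an $e^{1ab}$ with $\tau$ requires a $\tau$-monomial on the complementary indices in $\{3,4,5,6\}\smallsetminus\{a,b\}$, avoiding both $1$ and $2$, hence vanishing. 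This gives $\gamma(e_4)=\gamma(e_5)=\gamma(e_6)=0$ and leaves $e^1\wedge\gamma\in\spa{e^{12},e^{13}}$, an exact form. I expect this contraction computation---quadratic in $\tau$ and to be run over the full eight-dimensional space of exact three-forms---to be the main obstacle, although the divisibility observation above keeps it conceptually clean and within reach by hand.

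For the constructions I would exhibit explicit data on each of $\mfn_9$ and $\mfn_{28}$, which simultaneously settles the ``if'' direction and the refined claim. On $\mfn_{28}=(0,0,0,0,13+42,14+23)$, take $\nu=\tfrac12 e^{12}+\tfrac12 e^{34}+e^{56}$ and set $\tilde\rho:=d\nu=e^{136}-e^{145}-e^{235}-e^{246}$ together with $\tilde\omega:=e^{12}+e^{34}-e^{56}$. One checks that $(e^1,e^2,e^3,e^4,e^6,e^5)$ is an adapted coframe, so $(\tilde\omega,\tilde\rho)$ is a genuine $\SU(3)$-structure; that $d\tilde\rho=0$ (as $\tilde\rho$ is exact) and $d\tilde\omega^2=0$ (a short check using $e^{12}\wedge\tilde\rho=e^{34}\wedge\tilde\rho=0$), so the structure is half-flat; and finally that $\nu$ is of type $(1,1)$ and primitive, the latter because $\nu\wedge\tilde\omega^2=2\bigl(1-\tfrac12-\tfrac12\bigr)e^{123456}=0$. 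An entirely analogous explicit choice works on $\mfn_9$. These verifications are routine, so the substance of the theorem lies in the $\mfn_4$ exclusion.
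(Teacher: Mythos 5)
Your proposal is correct and follows the same skeleton as the paper's proof: the reduction to $\{\mfn_4,\mfn_9,\mfn_{28}\}$ via Corollary \ref{co:exactrho} and Conti's classification, the exclusion of $\mfn_4$ via Lemma \ref{le:obstructionhalflat} with $\alpha=e^1$, and explicit half-flat structures with primitive $(1,1)$ potentials on the two surviving algebras (your $\mfn_{28}$ data coincide with the paper's up to the choice of $\nu$, and both choices of $\nu$ are valid). The genuine difference is in how the obstruction for $\mfn_4$ is verified: the paper delegates this to a computer algebra computation, whereas you give a conceptual hand argument, and I have checked that it works. Your two ingredients are sound: unimodularity of a nilpotent algebra gives $d(\Lambda^5\mfn_4^*)=0$, hence every exact two-form annihilates every closed four-form under wedge, reducing the obstruction to exactness of $e^1\wedge\tilde J^*_\tau e^1$; and since $de^3,\dots,de^6$ all have monomials divisible by $e^1$ or $e^2$, every exact three-form $\tau$ inherits this divisibility, which forces $e^1\wedge(e_k\hook\tau)\wedge\tau=0$ for $k\in\{3,4,5,6\}$ exactly as you describe (each surviving contribution is a multiple of some $e^{12\ell}$, and $e^{12\ell}\wedge\tau=0$ because no monomial of $\tau$ avoids both $1$ and $2$). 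This leaves $e^1\wedge\tilde J_\tau^*e^1\in\spa{e^{12},e^{13}}\subseteq d\mfn_4^*$, so the obstruction applies. This is a worthwhile improvement in transparency over the paper's computer check. The one incomplete point is $\mfn_9$: you assert that an ``entirely analogous'' choice works but do not exhibit it, and it is not literally analogous --- the paper's primitive $(1,1)$ potential there is $\nu=e^{13}+\tfrac12 e^{26}+\tfrac12 e^{45}+e^{56}$ with $d\nu=e^{125}+e^{146}-e^{236}-e^{345}$ and $\omega=e^{13}+e^{24}-e^{56}$, which involves cross terms absent in the $\mfn_{28}$ case; you should write this (or an equivalent) out to close the argument.
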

\begin{proof}
As explained above, by the results of \cite{C} and Corollary \ref{co:exactrho}, only $\mfn_4$, $\mfn_9$ or $\mfn_{28}$ may admit a half-flat $(\omega,\rho)\in \Lambda^2 \mfh^*\times\Lambda^3\mfh^*$ with exact $\rho$ 

Now a direct computation, efficiently carried out with a computer algebra system like MAPLE, shows that one may the obstruction in Lemma \ref{le:obstructionhalflat}
with, e.g.\ $\alpha=e^1$ or $\alpha=e^2$, to exclude the existence of a half-flat $\SU(3)$-structure $(\omega,\rho)\in \Lambda^2 \mfh^*\times\Lambda^3\mfh^*$ with exact $\rho$ on $\mfn_4$.

For the other two cases, we provide a half-flat $\SU(3)$-structure $(\tilde{\omega},\tilde{\rho})\in \Lambda^2 \mfh^*\times\Lambda^3\mfh^*$ and some $\nu\in [\Lambda^{1,1}_0 \mfh^*]$ with $\tilde{\rho}=d\nu$:
\begin{itemize}
\item $\mfn_9$: Here, we may take the $\SU(3)$-structure defined by the adapted basis $(e^1,e^3,e^2,e^4,e^5,-e^6)$, i.e.\
\begin{equation*}
\omega=e^{13}+e^{24}-e^{56},\qquad \rho=e^{125}+e^{146}-e^{236}-e^{345}.
\end{equation*}
Then one checks that $d(\omega^2)=0$. Moreover, set $\nu:=e^{13}+\tfrac{1}{2}e^{26}+\tfrac{1}{2}e^{45}+e^{56}$. Then $d\nu=\rho$ and $\rho$ is a $(1,1)$-form.
Since $\nu\wedge \omega^2=0$, $\nu$ is primitive as well, i.e.\ $\nu\in [\Lambda^{1,1}_0 \mfh^*]$.
\item $\mfn_{28}$: Take the $\SU(3)$-structure defined by the adapted basis $(e^1,e^2,e^3,e^4,e^6,e^5)$, i.e.\
\begin{equation*}
\omega=e^{12}+e^{34}-e^{56},\qquad \rho=e^{136}-e^{145}-e^{235}-e^{246}.
\end{equation*}
Then $d(\omega^2)=0$. Setting now $\nu:=e^{12}+e^{56}$, we get $d\nu=\rho$ and that $\rho$ is a $(1,1)$-form.
Again $\nu\wedge \omega^2=0$ and so $\nu$ is primitive, i.e.\ $\nu\in [\Lambda^{1,1}_0 \mfh^*]$.
\end{itemize}
\end{proof}
\begin{remark}
Fino and Raffero determined in \cite{FR} all six-dimensional nilpotent Lie algebras admitting a so-called \emph{coupled} half-flat $\SU(3)$-structure, i.e.\ a half-flat $\SU(3)$-structure $(\omega,\rho)\in \Lambda^2 \mfh^*\times \Lambda^3\mfh^*$ with $d\omega=\rho$. Interestingly, the six-dimensional nilpotent Lie algebras admitting a coupled half-flat $\SU(3)$-structure are also $\mfn_9$ and $\mfn_{28}$.

Our proof of Theorem \ref{th:halfflatexactrho} is independent of the coupled approach, and in some sense more direct. Fino and Raffero compute with a computer algebra system for all $24$ six-dimensional nilpotent Lie algebras admitting a half-flat $\SU(3)$-structure the most general exact three-form $\rho$ and check if the quartic invariant $\lambda$ of $\rho$ can be negative. This way they obtain that, of the six-dimensional nilpotent Lie algebras admitting a half-flat $\SU(3)$-structure, those which admit a maybe non half-flat $\SU(3)$-structure with exact three-form part are precisely $\mfn_4$, $\mfn_9$ and $\mfn_{28}$. Then they show by different methods that $\mfn_4$ cannot admit a coupled $\SU(3)$-structure.
\end{remark}
\section{Exact $\G_2$-structures on compact almost nilpotent solvmanifolds}\label{sec:exactG2}
Here, we prove that a compact almost nilpotent solvmanifold cannot admit an invariant exact $\G_2$-structure.

For this, note first that Corollary \ref{co:exactrho} implies the following:
\begin{corollary}\label{co:exactG2}
	Let $\mfg$ be a seven-dimensional almost nilpotent Lie algebra with codimension-one nilpotent ideal $\mfh$. If $\mfg$ admits an exact $\G_2$-structure, then $\mfh$ is isomorphic
	to $\mfn_1$, $\mfn_4$, $\mfn_9$, $\mfn_{18}$ or $\mfn_{28}$.
\end{corollary}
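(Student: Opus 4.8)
The plan is to combine the six-dimensional classification in Corollary \ref{co:exactrho} with the reduction to six dimensions provided by Theorem \ref{th:exactG2structure}. The key observation is that the three-form part $\rho$ of the $\SU(3)$-structure induced on the ideal $\mfh$ by an exact $\G_2$-structure is itself an \emph{exact} $\SL(3,\bC)$-structure on $\mfh$, so the classification of such structures immediately restricts the possibilities for $\mfh$.

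First I would observe that a nilpotent Lie algebra is unimodular, so the given codimension-one nilpotent ideal $\mfh$ is in particular a codimension-one unimodular ideal, and is therefore eligible for the hypotheses of Theorem \ref{th:exactG2structure}. Now suppose $\mfg$ carries an exact $\G_2$-structure $\varphi$. I would choose $e_7 \in \mfh^{\perp_{g_{\varphi}}}$ of unit norm, set $e^7 \in \Ann(\mfh)$ with $e^7(e_7)=1$, and write $\varphi = \omega \wedge e^7 + \rho$ with $(\omega,\rho) \in \Lambda^2 \mfh^* \times \Lambda^3 \mfh^*$. By Lemma \ref{le:SU3G2relation} the pair $(\omega,\rho)$ is an $\SU(3)$-structure on $\mfh$; in particular, $\rho$ alone is an $\SL(3,\bC)$-structure on $\mfh$.

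Next, Theorem \ref{th:exactG2structure} tells me that exactness of $\varphi$ forces $\rho = d_{\mfh}\nu$ for some $\nu \in \Lambda^2 \mfh^*$, i.e.\ $\rho$ is an \emph{exact} $\SL(3,\bC)$-structure on $\mfh$. At this point Corollary \ref{co:exactrho} applies directly: the only six-dimensional nilpotent Lie algebras admitting an exact $\SL(3,\bC)$-structure are $\mfn_1$, $\mfn_4$, $\mfn_9$, $\mfn_{18}$ and $\mfn_{28}$, so $\mfh$ must be isomorphic to one of these five, as claimed.

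There is no serious obstacle here; the statement is a formal consequence of the two earlier results once one recognises that the exact $\rho$ produced by Theorem \ref{th:exactG2structure} is precisely the exact $\SL(3,\bC)$-structure classified in Corollary \ref{co:exactrho}. The only point requiring a moment's care is checking that the nilpotent ideal supplied in the hypothesis already satisfies the unimodularity assumption of Theorem \ref{th:exactG2structure} — which is automatic, since nilpotency implies unimodularity — so that one need not invoke Proposition \ref{pro:codimoneideal} to produce a different ideal.
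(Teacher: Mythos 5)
Your argument is correct and is exactly the reasoning the paper intends (the paper simply states that Corollary \ref{co:exactrho} implies the result, leaving the combination with Theorem \ref{th:exactG2structure} implicit): the induced $\SU(3)$-structure $(\omega,\rho)$ on $\mfh$ has $\rho=d_{\mfh}\nu$ exact, so Corollary \ref{co:exactrho} pins down $\mfh$. Your added remark that nilpotency of $\mfh$ supplies the unimodularity hypothesis of Theorem \ref{th:exactG2structure} is a correct and worthwhile detail.
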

We show now that four of the five cases of a codimension-one nilpotent ideal, namely $\mfn_1$, $\mfn_9$, $\mfn_{18}$ and $\mfn_{28}$, may occur in Corollary \ref{co:exactG2} leaving open if there is an almost nilpotent Lie algebra with codimension one ideal $\mfn_4$ which admits an exact $\G_2$-structure.

For this, note that Theorem \ref{th:exactG2n28} below even classifies all the almost nilpotent Lie algebra with codimension-one nilpotent ideal isomorphic to $\mfn_{28}$ which admit an exact $\G_2$-structure. For $\mfh\in \{\mfn_1, \mfn_9 \mfn_{18}\}$, we provide now one example of an exact $\G_2$-structure on a seven-dimensional almost nilpotent Lie algebra with codimension-one nilpotent ideal $\mfh$:
\begin{example}
	For the six-dimensional nilpotent Lie algebras $\mfh\in \{\mfn_1,\mfn_9,\mfn_{18}\}$, we give an $\SU(3)$-structure $(\omega,\rho)\in \Lambda^2 \mfh^*\times \Lambda^3 \mfh^*$, a two-form $\nu\in\Lambda^2 \mfh^*$, a one-form $\alpha\in \mfh^*$ and a derivation $f\in \mathrm{Der}(\mfh)$ such that $\rho=d\nu$ and such that \eqref{eq:3} is valid.
	\begin{itemize}
		\item $\mfn_1$: Take the $\SU(3)$-structure $(\omega,\rho)\in \Lambda^2 \mfn_1^*\times \Lambda^3 \mfn_1^*$ defined by the adapted basis
		$\big(-e^1+ae^5,e^3+ae^6,$ $e^2,e^4,e^5,e^6\big)$ with $a:=\tfrac{3+\sqrt{5}}{2}$, i.e.\
		\begin{equation*}
		\omega=-e^{13}-a\, e^{16}+e^{24}-a\, e^{35}+\left(1+a^2\right)e^{56},\qquad \rho=-e^{125}+e^{146}+e^{236}-e^{345}.
		\end{equation*}
		Setting 
		\begin{equation*}\ts
		\nu:=\left(2-\frac{2}{3}\,a\right) e^{15}-\frac{1}{2}e^{16}+\left(2-\frac{2}{3}\,a\right) e^{24}-\frac{1}{2} e^{35}+e^{56},
		\end{equation*}
		one gets $d\nu=\rho$. Moreover,
		\begin{equation*}
		f:=
		\left(\begin{smallmatrix}
		-\frac{a}{4} & 0 & 0 & 0 & 0 & 0 \\
		0 & -\frac{a}{2} & 0 & 0 & 0 & 0 \\
		0 & 0 &  -\frac{a^2+1}{4} & 0 & 0 & 0 \\
		0 & 0 & 0 & -a & 0 & 0  \\
		0 & 0 & 0 & 0 & -\frac{5}{4}\,a  & 0  \\
		-1 & 0 & 0 & 0 & 0 & -\frac{7}{4}\,a \\
		\end{smallmatrix}\right).
		\end{equation*}
		is a derivation of $\mfn_1$ and one computes $f.\nu=\omega+e^{13}$.
		Hence, choosing $\alpha:=e^4$, we have $d\alpha=e^{13}$ and so $f.\nu-d\alpha=\omega$, i.e.\ \eqref{eq:3} is fulfilled.
		\item $\mfn_9$:
		In this case, we choose the $\SU(3)$-structure $(\omega,\rho)\in \Lambda^2 \mfn_9^*\times \Lambda^3 \mfn_9^*$ defined by the adapted basis $(e^1,e^3,e^2,e^4,e^5,-e^6)$, i.e.	
		\begin{equation*}
		\omega=e^{13}+e^{24}-e^{56},\qquad \rho=e^{125}+e^{146}-e^{236}-e^{345}.
		\end{equation*}
		Setting
		\begin{equation*}\ts
		\nu:=-\frac{2}{3} e^{13}+e^{24}+\frac{1}{2} e^{26}+\frac{1}{2} e^{45}+e^{56},
		\end{equation*}
		one obtains $d\nu=\rho$. Moreover,
		\begin{equation*}\ts
		f:=\diag\left(\frac{1}{2},-\frac{3}{4},1,-\frac{1}{4},\frac{1}{4},\frac{3}{4}\right)
		\end{equation*}
		is a derivation of $\mfn_9$ and $f.\nu=e^{13}+e^{24}-e^{56}=\omega$. Thus, for $\alpha:=0$,
		\eqref{eq:3} is satisfied.
		\item $\mfn_{18}$: Here, we look at the $\SU(3)$-structure $(\omega,\rho)\in \Lambda^2 \mfn_{18}^*\times \Lambda^3 \mfn_{18}^*$ defined by the adapted basis $(e^1,e^2,e^3,e^4,e^6,e^5)$, i.e.
		\begin{equation*}
		\omega=e^{12}+e^{34}-e^{56},\qquad \rho=e^{136}-e^{145}-e^{235}-e^{246}.
		\end{equation*}
		Taking
		\begin{equation*}\ts
		\nu:=\frac{3}{2}e^{16}-\frac{3}{2}e^{34}+e^{56},
		\end{equation*}
		we get $d\nu=\rho$. Now one checks that
		\begin{equation*}
		f:=
		\left(\begin{smallmatrix}
		\frac{1}{6} & 0 & 0 & 0 & 0 & 0 \\
		0 & \frac{1}{6} & 0 & 0 & 0 & 0 \\
		0 & 0 & \frac{1}{3} & 0 & 0 & 0 \\
		0 & 0 & 0 & \frac{1}{3} & 0 & 0  \\
		-1 & 0 & 0 & 0 & \frac{1}{2} & 0  \\
		0 & 0 & 0 & 0 & 0 & \frac{1}{2}  \\
		\end{smallmatrix}\right).
		\end{equation*}
		is a derivation of $\mfn_{18}$ and that $f.\nu=e^{34}-e^{56}$. Thus, for $\alpha:=-e^4$, we have $d\alpha=-e^{12}$ and so $f.\nu-d\alpha=e^{12}+e^{34}-e^{56}=\omega$,
		i.e.\ \eqref{eq:3} is valid for our choices.
	\end{itemize}
\end{example}
Next, we look at compact almost nilpotent solvmanifolds, i.e. manifolds of the form $\Gamma\backslash G$, where $G$ is a simply-connected almost nilpotent Lie group and $\Gamma$ a cocompact lattice in $G$. A necessary condition for the existence of such a lattice is that the associated Lie algebra $\mfg$ is \emph{strongly unimodular}, cf.\ \cite{G}:
\begin{definition}
Let $\mfg$ be a solvable Lie algebra $\mfg$, $\mfn$ be its nilradical and $\mfn^0,\mfn^1,\ldots$ be the descending central series of $\mfn$. One checks that $\ad_X$ preserves $\mfn^i$ for all $X\in \mfg$ and all $i\in \bN$. $\mfg$ is called \emph{strongly unimodular} if $\tr(\ad_X|_{\mfn^i/\mfn^{i+1}})=0$ for all $i\in \bN$ and all $X\in \mfg$.
\end{definition}
\begin{remark}
Since the commutator ideal $[\mfg,\mfg]$ of a solvable Lie algera $\mfg$ is nilpotent, the nilradical $\mfn$ contains the commutator ideal $[\mfg,\mfg]$. Hence, if $\mfg$ is strongly unimodular, one has $\tr(\ad_X)=0$ for all $X\in \mfg$, i.e.\ $\mfg$ is unimodular.
\end{remark}
\begin{theorem}\label{th:exactG2stronglyunimodular}
Let $\mfg$ be a seven-dimensional strongly unimodular almost nilpotent Lie algebra. Then $\mfg$ does not admit an exact $\G_2$-structure.
\end{theorem}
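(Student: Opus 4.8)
The plan is to argue by contradiction: suppose $\mfg$ is seven-dimensional, strongly unimodular, almost nilpotent with codimension-one nilpotent ideal $\mfh$, and admits an exact $\G_2$-structure. By Corollary \ref{co:exactG2}, $\mfh$ must be one of $\mfn_1,\mfn_4,\mfn_9,\mfn_{18},\mfn_{28}$. First I would observe that the nilradical $\mfn$ of $\mfg$ equals $\mfh$: since $\mfh$ is a nilpotent ideal we have $\mfh\subseteq\mfn$, and if $\mfn=\mfg$ then $\mfg$ is nilpotent, which is excluded by Proposition \ref{pro:noexactG2nilpotentLA}. Writing $\mfg=\mfh\rtimes_f\bR$ with $f=\ad(e_7)|_{\mfh}\in\mathrm{Der}(\mfh)$, strong unimodularity becomes the statement that $\tr(f|_{\mfh^i/\mfh^{i+1}})=0$ for every step $i$ of the descending central series of $\mfh$ (the inner derivations $\ad_X$, $X\in\mfh$, are nilpotent and so contribute nothing to these traces).

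The key mechanism is a weight analysis on the associated graded Lie algebra. Since $f$ preserves the descending central series, its eigenvalues are the union of the eigenvalues on the quotients $\mfh^i/\mfh^{i+1}$, and these are additive along nonzero brackets. For $\mfh\in\{\mfn_1,\mfn_4,\mfn_9,\mfn_{18}\}$ all graded pieces beyond the first are at most one-dimensional, so strong unimodularity forces the eigenvalue on each such piece to vanish; feeding these zeros back through the additivity relations read off from Table \ref{table:1}, together with $\tr(f|_{\mfh/[\mfh,\mfh]})=0$, forces \emph{every} eigenvalue of $f$ to be zero. Hence $f$ is a nilpotent endomorphism. A nilpotent ideal extended by a nilpotent derivation is again nilpotent: on the associated graded of the filtration by the $\mfh^i$ the operator $\ad_{H+se_7}|_{\mfh}=\ad^{\mfh}_H+sf$ acts as $sf$ (the inner part raises filtration degree), which is nilpotent, so every $\ad_X$ is nilpotent and $\mfg$ is nilpotent by Engel's theorem. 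This contradicts Proposition \ref{pro:noexactG2nilpotentLA} and settles these four cases.

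The remaining, and genuinely harder, case is $\mfh=\mfn_{28}$, the real Lie algebra underlying the complex Heisenberg group. Here $\mfh/[\mfh,\mfh]$ is four-dimensional and $[\mfh,\mfh]=\mathfrak{z}(\mfh)$ is two-dimensional, so strong unimodularity only pins the eigenvalues of $f$ down to the form $(a,a,-a,-a,0,0)$ and does \emph{not} force $f$ to be nilpotent. For this case I would invoke the exact-$\G_2$ equations of Theorem \ref{th:exactG2structure}: there must exist $\nu\in\Lambda^2\mfh^*$ and $\alpha\in\mfh^*$ with $\rho=d_{\mfh}\nu$ an $\SL(3,\bC)$-structure and $\omega=f.\nu-d_{\mfh}\alpha$ non-degenerate. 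Using the weight grading one checks that $\omega$ can carry no $e^{56}$-component, and that when $a\neq0$ the weight-zero subspace of $\mfh$ is exactly $\mathfrak{z}(\mfh)=\spa{e_5,e_6}$; combining the bound $\dim\mathfrak{z}(\mfg)\le1$ from Proposition \ref{pro:noexactG2nilpotentLA} with the non-degeneracy of $\omega$ and the condition $\lambda(\rho)<0$ on $\rho=d_{\mfh}\nu$ should force $a=0$, returning us to the nilpotent situation and hence to a contradiction.

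The hard part is precisely this $\mfn_{28}$-analysis: unlike the other four ideals, strong unimodularity leaves a one-parameter family of admissible derivations, so nilpotency cannot be concluded abstractly and one must instead exploit the non-degeneracy of $\omega$ together with the exactness and $\SL(3,\bC)$-conditions on $\rho$. This is the same computation that underlies the explicit classification of exact $\G_2$-structures over $\mfn_{28}$ in Theorem \ref{th:exactG2n28}, and I expect it to be the step demanding the most care, even though it remains just within reach of a computation by hand.
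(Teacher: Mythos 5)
Your reduction to the five ideals, the identification of $\mfh$ with the nilradical, and the translation of strong unimodularity into trace conditions on $f=\ad(e_7)|_{\mfh}$ along the graded pieces $\mfh^i/\mfh^{i+1}$ all match the paper's setup. However, the core mechanism of your proof --- forcing $f$ to be nilpotent --- breaks down in two of the five cases, and one of these is a case you believed you had settled. Your claim that for $\mfh\in\{\mfn_1,\mfn_4,\mfn_9,\mfn_{18}\}$ all graded pieces beyond the first are one-dimensional is false for $\mfn_{18}=(0,0,0,12,13+42,14+23)$: there one has $\mfh^1=\spa{e_4,e_5,e_6}$ and $\mfh^2=\mathfrak{z}(\mfh)=\spa{e_5,e_6}$, so the last graded piece is two-dimensional and strong unimodularity only constrains the \emph{sum} of the eigenvalues there. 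Worse, the conclusion itself fails: the endomorphism $D$ with $De_1=e_2$, $De_2=-e_1$, $De_3=De_4=0$, $De_5=-e_6$, $De_6=e_5$ is a semisimple derivation of $\mfn_{18}$ with eigenvalues $\{\pm i,0,0,\pm i\}$ whose trace on every $\mfh^i/\mfh^{i+1}$ vanishes, so $\mfn_{18}\rtimes_D\bR$ is strongly unimodular but not nilpotent, and your contradiction-via-nilpotency route cannot dispose of it. (Even where the eigenvalue bookkeeping does work, one-dimensionality of a graded \emph{quotient} only pins down a single real eigenvalue; purely imaginary eigenvalues are invisible to these trace conditions, which is also why your remark that $a=0$ for $\mfn_{28}$ ``returns us to the nilpotent situation'' is incorrect.) Your $\mfn_{28}$ discussion is in any case only a sketch (``should force $a=0$''), so two of the five cases remain open.

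The paper's proof avoids nilpotency considerations entirely and is uniform across all five ideals. Using Proposition \ref{pro:exactrho}, it singles out a two-dimensional $J$-invariant subspace $\mfa$ containing $\mathfrak{z}(\mfh)$ --- namely $\mfa=\mfh_2$ for $\mfn_1,\mfn_4,\mfn_9$ and $\mfa=\mathfrak{z}(\mfh)$ for $\mfn_{18},\mfn_{28}$ --- which in each case is a sum of graded pieces of the descending central series, so that strong unimodularity gives $\tr(f|_{\mfa})=0$. Evaluating the exactness equation \eqref{eq:3} on $(X,JX)$ for $0\neq X\in\mathfrak{z}(\mfh)$ then yields
\begin{equation*}
0=-\tr(f|_{\mfa})\,\nu(X,JX)=(f.\nu-d_{\mfh}\alpha)(X,JX)=\omega(X,JX)=\pm\|X\|^2\neq 0,
\end{equation*}
using that $f$ preserves $\mfa=\spa{X,JX}$ and that $d_{\mfh}\alpha(X,JX)=-\alpha([X,JX])=0$. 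If you want to salvage your approach, this is the missing idea: rather than trying to kill all eigenvalues of $f$, it suffices to exploit the single trace condition on the canonical $J$-invariant plane through the centre together with the non-degeneracy of $g_{(\omega,\rho)}$.
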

\begin{proof}
Assume the contrary. By Corollary \ref{co:exactG2} the Lie algebra $\mfg$ then admits a codimension-one nilpotent ideal $\mfh$ which is isomorphic to $\mfn_1$, $\mfn_4$, $\mfn_9$, $\mfn_{18}$ or $\mfn_{28}$. Moreover, $\mfh$ is the nilradical as the entire Lie algebra cannot be nilpotent according to Proposition \ref{pro:noexactG2nilpotentLA}. Furthermore, we have an induced $\SU(3)$-structure $(\omega,\rho)\in \Lambda^2 \mfh^*\times \Lambda^3 \mfh^*$ on $\mfh$ with exact $\rho$, i.e.\ there is some $\nu\in \Lambda^2 \mfh^*$ with $d\nu=\rho$ which has to fulfill
	\begin{equation*}
	f.\nu-d_{\mfh}\alpha=\omega
	\end{equation*}
	for some one-form $\alpha\in \mfh^*$ by Theorem \ref{th:exactG2structure}. Now we know that in the cases
	$\mfn_1$, $\mfn_4$ and $\mfn_9$, we have $\dim(\mathfrak{z}(\mfh))=1$ and $\dim(\mfh_2)=2$ with $\mfh_2$ being $J$-invariant by Proposition \ref{pro:exactrho} for the almost complex structure $J$ induced by $\rho$. Moreover, in all theses cases, one checks that $\mfh_2$ is
	the sum of quotient spaces of the form $\mfh^i/\mfh^{i+1}$, i.e.\ the trace of each $\ad_X$, $X\in \mfg$, has to be trace-free on $\mfh_2$. In these cases, we set $\mfa:=\mfh_2$.
	
	In the cases $\mfn_{18}$ and $\mfn_{28}$, we have $\dim(\mathfrak{z}(\mfh))=2$ and $\mathfrak{z}(\mfh)$ is $J$-invariant by Proposition \ref{pro:exactrho}. Moreover, $\mathfrak{z}(\mfh)$ equals in both cases the last non-zero $\mfh^i$, so is of the form $\mfh^i/\mfh^{i+1}$. Hence, each $\ad_X$, $X\in \mfg$, has to be trace-free when restricted to $\mathfrak{z}(\mfh)$. Here, we set $\mfa:=\mathfrak{z}(\mfh)$.
	
	Now coming back to genereal case, choose some $0\neq X\in \mathfrak{z}(\mfh)\subseteq \mfa$. Then we get
	\begin{equation*}
	0=-\tr(f|_{\mfa})\, \nu(X,JX) = (f.\nu-d_{\mfh}\alpha)(X,JX)=\omega(X,JX)=-\left\|X\right\|^2\neq 0,
	\end{equation*}
	since $f$ has to preserve $\mfa=\spa{X,JX}$. This yields the desired contradiction and so $\mfg$ cannot admit an exact $\G_2$-structure.
\end{proof}
In general, if $G$ is a simply-connected solvable Lie group which admits a cocompact lattice $\Gamma$, then any left-invariant differential form $\beta$ induces a differential form $\tilde{\beta}$ on the compact quotient $\Gamma\backslash G$. We then call $\tilde{\beta}$ \emph{invariant}. By \cite[Theorem 3.2.10]{OTr}, the assignment $\beta\mapsto \tilde{\beta}$ induces an injection $H^*(\mfg)\rightarrow H_{dR}^*(\Gamma\backslash G)$.

Hence, Theorem \ref{th:exactG2stronglyunimodular} implies that no compact almost nilpotent solvmanifold can admit an \emph{invariant} exact $\G_2$-structure. If $G$ is \emph{completely solvable}, i.e.\ $\ad_X$ has only real eigenvalues for all $X\in \mfg$, then $H^*(\mfg)\rightarrow H_{dR}^*(\Gamma\backslash G)$ is an isomorphism by \cite{H} and so one may skip the word `invariant' in the statement:
\begin{corollary}\label{co:exactG2solvmanifolds}
Let $M=\Gamma\backslash G$ be an almost nilpotent solvmanifold, i.e.\ $G$ is a simply-connected almost nilpotent Lie group and $\Gamma$ a cocompact lattice in $G$. 
Then $M$ does not admit an invariant exact $\G_2$-structure. If $G$ is completely solvable, then $M$ does not admit any exact $\G_2$-structure at all.
\end{corollary}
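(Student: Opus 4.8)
The plan is to derive both assertions from the Lie-algebra nonexistence statement of Theorem \ref{th:exactG2stronglyunimodular}, using the comparison between the invariant cohomology $H^*(\mfg)$ and the de Rham cohomology $H^*_{dR}(M)$. Throughout, observe that $M=\Gamma\backslash G$ is seven-dimensional, that $\mfg$ is almost nilpotent because $G$ is, and that the existence of the cocompact lattice $\Gamma$ forces $\mfg$ to be strongly unimodular by \cite{G}. Hence $\mfg$ satisfies exactly the hypotheses of Theorem \ref{th:exactG2stronglyunimodular}, and it admits no exact $\G_2$-structure.

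First I would settle the invariant case. An invariant $\G_2$-structure on $M$ is by definition induced by a $\G_2$-structure $\varphi\in\Lambda^3\mfg^*$ on the Lie algebra, and ``invariant exact'' means that its de Rham class vanishes, $[\tilde\varphi]=0$ in $H^3_{dR}(M)$. Since $\tilde\varphi$ is exact it is in particular closed, so $d_\mfg\varphi=0$ and $\varphi$ determines a class $[\varphi]\in H^3(\mfg)$ mapping to $[\tilde\varphi]=0$ under the injection $H^*(\mfg)\hookrightarrow H^*_{dR}(M)$ of \cite[Theorem 3.2.10]{OTr}. Injectivity gives $[\varphi]=0$ in $H^3(\mfg)$, i.e.\ $\varphi=d_\mfg\chi$ for some $\chi\in\Lambda^2\mfg^*$, so $\varphi$ is an exact $\G_2$-structure on $\mfg$. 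This contradicts Theorem \ref{th:exactG2stronglyunimodular}, proving the first assertion.

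For the completely solvable case I would remove the word ``invariant'' by symmetrization. Let $s\colon\Omega^*(M)\to\Lambda^*\mfg^*$ be the map obtained by averaging a form against left-invariant vector fields with respect to the invariant probability measure on the compact manifold $M$ (available since $\mfg$ is unimodular). Then $s$ is a cochain map, $s\circ d=d_\mfg\circ s$, and it is a left inverse of the inclusion $\iota\colon\Lambda^*\mfg^*\hookrightarrow\Omega^*(M)$. When $G$ is completely solvable, $\iota^*\colon H^*(\mfg)\to H^*_{dR}(M)$ is an isomorphism by \cite{H}, so that $s^*=(\iota^*)^{-1}$ on cohomology. Given any exact $\G_2$-structure $\varphi=d\beta$ on $M$, the symmetrized three-form $\varphi^{\mathrm{sym}}:=s(\varphi)$ is closed with $[\varphi^{\mathrm{sym}}]=s^*[\varphi]=0$ in $H^3(\mfg)$, hence exact on $\mfg$. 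If one also knows that $\varphi^{\mathrm{sym}}$ is again a $\G_2$-structure, then it is an invariant exact $\G_2$-structure on $M$, and the invariant case (equivalently, Theorem \ref{th:exactG2stronglyunimodular}) yields the desired contradiction.

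The main obstacle is exactly this last point. Being a $\G_2$-structure is an \emph{open, nonlinear} condition on a three-form---it amounts to lying in the open $\GL^+(7,\bR)$-orbit, detected by the definiteness of a bilinear form $B_\varphi$ that depends polynomially (cubically) on $\varphi$---and averaging forms in an open orbit need not preserve nondegeneracy, so the $\G_2$-property of $\varphi^{\mathrm{sym}}$ is not automatic. The resolution is the symmetrization result for $\G_2$-structures on compact completely solvable solvmanifolds, of the type already exploited in \cite{FFR}, which guarantees that $\varphi^{\mathrm{sym}}$ remains a $\G_2$-structure; note that complete solvability is used twice here, both to produce the cohomology isomorphism of \cite{H} and to secure this compatibility of symmetrization with the $\G_2$-structure. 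Once this is granted, the completely solvable statement follows from the invariant one precisely as above.
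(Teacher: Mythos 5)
Your proof of the first assertion is correct and coincides with the paper's: the existence of the cocompact lattice forces $\mfg$ to be strongly unimodular by \cite{G}, the injection $H^*(\mfg)\rightarrow H^*_{dR}(\Gamma\backslash G)$ of \cite[Theorem 3.2.10]{OTr} upgrades exactness on $M$ of an invariant form to exactness in $\Lambda^*\mfg^*$, and Theorem \ref{th:exactG2stronglyunimodular} then applies.

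In the completely solvable case you follow the same reduction the paper has in mind (symmetrisation combined with Hattori's isomorphism), and you correctly isolate the one nontrivial point: $\varphi^{\mathrm{sym}}$ is an invariant three-form that is exact in $\Lambda^3\mfg^*$, but nothing forces it to be a $\G_2$-structure. Your way of closing this gap, however, does not hold up. The reference \cite{FFR} proves a purely Lie-algebraic statement and does not establish a ``symmetrization result for $\G_2$-structures''; and complete solvability --- a condition on the eigenvalues of $\ad_X$ --- cannot ``secure'' positivity of the average, since it enters the argument only through Hattori's theorem. Nor can any general averaging principle be behind such a claim, because positivity of three-forms is not a convex condition: every $g^*\varphi_0$ with $g\in\SO(7)$ is positive, yet the average of the $\SO(7)$-orbit of $\varphi_0$ is the zero form (there is no nonzero $\SO(7)$-invariant three-form on $\bR^7$), so an average of positive forms need not be positive. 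Consequently the step ``$\varphi^{\mathrm{sym}}$ remains a $\G_2$-structure'' is unsupported as written and is the genuine gap in your argument; you would need either a proof of positivity of the symmetrisation in this specific situation or a precise reference for it. Be aware that the paper itself is equally laconic here --- its entire justification for dropping the word ``invariant'' is the isomorphism of \cite{H} --- so you have faithfully reconstructed the intended reduction and honestly flagged exactly where the published argument is also silent; but naming the obstacle and then asserting that an unspecified result of \cite{FFR} removes it does not constitute a proof.
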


\section{Closed $\G_2$-eigenforms on almost nilpotent Lie algebras}\label{sec:closedG2eigenforms}
In this section, we establish:
\begin{theorem}\label{th:noeigenforms}
Let $\mfg$ be a seven-dimensional almost nilpotent Lie algebra. Then
$\mfg$ does not admit a closed $\G_2$-eigenform.
\end{theorem}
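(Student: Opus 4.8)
The plan is to push the eigenform equations down to the six-dimensional ideal, use the dimension-six classification to reduce to two algebras, and then eliminate both by a direct computation.

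Let $\varphi$ be a closed $\G_2$-eigenform on an almost nilpotent $\mfg$, normalised so that $\Delta_\varphi\varphi=\varphi$, and let $\mfh$ be a codimension-one nilpotent ideal. Being nilpotent, $\mfh$ is unimodular, so I can apply Theorem~\ref{th:eigenform} with this particular $\mfh$ and $e_7\in\mfh^{\perp_{g_\varphi}}$: the induced $\SU(3)$-structure $(\omega,\rho)$ is half-flat, $\rho=d_\mfh\nu$ for a \emph{primitive} $(1,1)$-form $\nu$, and $f:=\ad(e_7)|_\mfh\in\mathrm{Der}(\mfh)$ satisfies $f.\nu=\omega$ and $\omega\wedge f.\omega-d_\mfh\hat\rho=\omega\wedge\nu$, i.e.\ \eqref{eq:1}--\eqref{eq:2}. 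In particular $(\omega,\rho)$ is a half-flat structure with exact $\rho$, so Theorem~\ref{th:halfflatexactrho} forces $\mfh\cong\mfn_9$ or $\mfh\cong\mfn_{28}$. This reduces the claim to a finite verification on these two algebras.

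Before specialising, I would extract the scalar consequences of primitivity that the introduction advertises as decisive. Applying the derivation $f$ to the identity $\nu\wedge\omega^2=0$ and using the Leibniz rule together with $f.\nu=\omega$ gives
\[
\omega^3=-2\,\nu\wedge\omega\wedge f.\omega .
\]
Substituting \eqref{eq:2} turns the right-hand side into $-2\bigl(\omega\wedge\nu^2+\nu\wedge d_\mfh\hat\rho\bigr)$. Two facts now collapse this further: primitivity of $\nu$ gives $\omega\wedge\nu^2=-\tfrac16\|\nu\|^2\,\omega^3$, and, since $\mfh$ is unimodular, $d_\mfh$ annihilates every $5$-form, so $0=d_\mfh(\nu\wedge\hat\rho)=\rho\wedge\hat\rho+\nu\wedge d_\mfh\hat\rho$, whence $\nu\wedge d_\mfh\hat\rho=-\rho\wedge\hat\rho$, a fixed multiple of $\omega^3$ by $\SU(3)$-compatibility. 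The outcome is a clean normalisation, $\|\nu\|^2=7$, pinning down the scale of the data. It is worth stressing that this is only a \emph{necessary} condition: these scalar identities constrain but do not by themselves contradict, which is exactly why the remaining case-by-case work cannot be avoided.

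The final and hardest step is to show that, on each of $\mfn_9$ and $\mfn_{28}$, the full system admits no solution. Here I would write down a general derivation $f\in\mathrm{Der}(\mfh)$ and a general two-form $\nu$; set $\rho:=d_\mfh\nu$ and impose $\lambda(\rho)<0$ (Lemma~\ref{le:quarticinvariant}) so that $J_\rho$ and $\hat\rho$ exist; require $\nu$ to be of type $(1,1)$ and primitive; set $\omega:=f.\nu$ and demand that $(\omega,\rho)$ be a compatible half-flat $\SU(3)$-structure; and finally impose \eqref{eq:2} together with the normalisation $\|\nu\|^2=7$. After cutting the parameter count with $\Aut(\mfh)$ and the residual scaling, the goal is to reduce the resulting polynomial system to a manifest impossibility. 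The main obstacle is precisely the size and nonlinearity of this system: the almost complex structure $J_\rho$ depends on $\nu$ through $\rho=d_\mfh\nu$, so the compatibility and primitivity conditions couple all the unknowns at once. As the authors emphasise, this computation sits at the very edge of what is feasible by hand, and success hinges on choosing the normal form and the elimination order so that the primitivity identity above carries most of the weight rather than a brute-force attack.
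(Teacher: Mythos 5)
Your reduction step is exactly the paper's: Theorem \ref{th:eigenform} converts the eigenform equation into the half-flat system \eqref{eq:1}--\eqref{eq:2} with $\rho=d_\mfh\nu$ for a primitive $(1,1)$-form $\nu$, and Theorem \ref{th:halfflatexactrho} then forces $\mfh\cong\mfn_9$ or $\mfn_{28}$. Your scalar identity is also essentially correct: applying $f$ to $\nu\wedge\omega^2=0$ and using \eqref{eq:1}, \eqref{eq:2}, primitivity and unimodularity does pin down $\|\nu\|^2$; this is the six-dimensional shadow of the identity $\mu=\tfrac17|\tau|^2_\varphi$ that the paper derives upstairs (recall $\tau=\nu$), so it is consistent but, as you yourself note, not a contradiction.

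The genuine gap is that you never carry out the elimination on $\mfn_9$ and $\mfn_{28}$, and that elimination \emph{is} the proof. Everything up to that point is a restatement of the problem as a polynomial system; the theorem is only established once one shows each system is inconsistent, and there is no soft reason why it should be (indeed the same two algebras \emph{do} carry exact $\G_2$-structures, and $\mfn_{28}\rtimes_f\bR$ carries many, by Theorem \ref{th:exactG2n28} --- so the obstruction is invisible at the level of general structure and only emerges from the coupled equations). The paper's treatment of this step occupies all of Section \ref{sec:closedG2eigenforms}: for $\mfn_9$ one first needs the normal form of Lemma \ref{le:normalformsn9} (obtained by an explicit analysis of $\mathrm{Der}(\mfn_9)$ and its exponentials), then the chain of Lemmas \ref{le:someequalitiesn9}--\ref{le:n9no3} successively kills parameters, with the primitivity equation $\nu\wedge\omega^2=0$ entering as one scalar equation among many rather than as a single decisive identity, before Theorem \ref{th:noeigenformsn9} reaches a numerical contradiction; for $\mfn_{28}$ one needs Lemma \ref{le:canonicalformsonn28} and the argument of Theorem \ref{th:noeigenformsn28}, which ends in the incompatible conclusions $\lambda^2=3$ and $\lambda^2=5$. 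Your proposal identifies the right system and the right difficulties, but describing the elimination as a ``goal'' whose success ``hinges on choosing the normal form and the elimination order'' is an acknowledgement that the decisive step has not been performed; as written, the argument proves only that $\mfh$ must be $\mfn_9$ or $\mfn_{28}$ and that $\|\nu\|^2$ is determined, not the nonexistence claimed in the theorem.
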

To start the proof, note that by Theorem \ref{th:eigenform} and Theorem \ref{th:halfflatexactrho}, the codimension-one nilpotent ideal $\mfh$ of an almost nilpotent Lie algebra admitting a closed $\G_2$-eigenform has to be isomorphic to $\mfn_9$ or to $\mfn_{28}$.

In Subsection \ref{subsec:n9}, we will show in Theorem \ref{th:noeigenformsn9} that no almost nilpotent Lie algebra with codimension-one nilpotent ideal isomorphic to $\mfn_9$ can admit a closed $\G_2$-eigenform and in Subsection \ref{subsec:n28}, we will show in Theorem \ref{th:noeigenformsn28} that no almost nilpotent Lie algebra with codimension-one nilpotent ideal isomorphic to $\mfn_{28}$ can admit a closed $\G_2$-eigenform. This work completes the proof of Theorem \ref{th:noeigenforms}.

In Subsection \ref{subsec:n28}, we also give a classification of all almost nilpotent Lie algebras with codimension-one nilpotent ideal isomorphic to $\mfn_{28}$ that admit an exact $\G_2$-structure, and we distinguish those with special torsion of positive type or of negative type, respectively.
\subsection{The case $\mfn_{9}$}\label{subsec:n9}
Note first that the Lie algebra $\mathrm{Der}(\mfn_9)$ of all derivations of $\mfn_9$ is given by
\begin{equation}\label{eq:derivationsn9}
\begin{split}
  \mathrm{Der}(\mfn_{9})&=\left\{\left.\left(\begin{smallmatrix} f_{5,5}-f_{4,4} & 0 & 0 &0 & 0 & 0 \\
                                                    f_{4,3} & -f_{5,5}+2 f_{4,4} & 0 & 0 & 0 & 0 \\
                                                     0 & 0 & 2 f_{5,5}-2 f_{4,4} & 0 & 0 & 0 \\
                                                     f_{5,3} & f_{5,4} & f_{4,3} & f_{4,4} & 0 & 0 \\
                                                     f_{5,1} & f_{6,4} & f_{5,3} & f_{5,4} & f_{5,5} & 0 \\
                                                     f_{6,1} & f_{6,2} & f_{6,3} & f_{6,4} & f_{5,4} & 2 f_{5,5}-f_{4,4}
\end{smallmatrix}\right)\;\right|\; f_{i,j}\in\bR \right\}.
\end{split}
\end{equation}
with respect to the basis $(e_1,\ldots,e_6)$ of $\mfn_9$. This can be checked by a lengthy but straightforward calculation done efficiently with a computer algebra system like MAPLE. Exponentials of these derivations are then (inner) automorphisms of the Lie algebra $\mfn_9$. Using these automorphisms, one obtains:
\begin{lemma}\label{le:normalformsn9}
	Let $(\omega,\rho)\in \Lambda^2 \mfn_9^*\times \Lambda^3\mfn_9^*$ be an $\SU(3)$-structure on $\mfn_9$ with exact $\rho$, i.e.\ there exists some $\nu\in \Lambda^2 \mfn_9^*$ with $d\nu=\rho$. Then $\omega,\rho$ and $\nu$ are given, up to automorphism, by
	\begin{equation}\label{eq:normalformn9}
\left\{	\begin{split}
	\rho&=\epsilon (e^{125}+e^{146}-e^{236}-e^{345})=:\epsilon \rho_0,\\
	\omega&= a_1 e^{13}+a_2 e^{24}+a_3 e^{56}+a_4 (e^{12}+e^{34})+a_5 \left(e^{15}-e^{36}\right)+a_6 \left(e^{25}-e^{46}\right)+a_7 \left(e^{26}+e^{45}\right),\\
	\nu&=b_1 e^{12}+b_2 e^{13}+b_3 e^{14}+b_4 \left(e^{15}+e^{34}\right)+b_5\left(e^{16}+e^{35}\right) +b_6 e^{23}+b_7 e^{24}+\tfrac{\epsilon}{2}\left(e^{26}+e^{45}\right)+\epsilon\, e^{56},
	\end{split}\right.
	\end{equation}
	for certain $\epsilon\in \{1,-1\}$, $a_1,\ldots,a_7\in \bR$ with $a_1 a_2>0$, $a_1 a_3<0$ and certain $b_1,\ldots,b_7\in \bR$.
	If $(\omega,\rho)$ is half-flat and $\nu$ of type $(1,1)$, then we may assume that $a_4=a_5=0$, $b_1=b_4=0$ and $b_6=b_3$ in \eqref{eq:normalformn9}.
\end{lemma}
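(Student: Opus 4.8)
The plan is to establish the lemma by an explicit normal-form reduction under the automorphism group, using that the identity component of $\Aut(\mfn_9)$ is $\exp(\mathrm{Der}(\mfn_9))$ with $\mathrm{Der}(\mfn_9)$ given by \eqref{eq:derivationsn9}. First I would compute the differential $d\colon\Lambda^2\mfn_9^*\to\Lambda^3\mfn_9^*$. A direct calculation shows that the image (the exact three-forms) is the eight-dimensional span of $e^{123},e^{124},e^{134},e^{135},e^{125},e^{234},e^{126}+e^{145},e^{146}-e^{236}-e^{345}$, while the kernel (the closed two-forms) is the seven-dimensional span of $e^{12},e^{13},e^{14},e^{23},e^{24},e^{15}+e^{34},e^{16}+e^{35}$. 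Consequently a general exact $\rho=d\nu$ is a combination of the eight generators, and, since $\tfrac\epsilon2(e^{26}+e^{45})+\epsilon e^{56}$ is a particular primitive with $d\big(\tfrac\epsilon2(e^{26}+e^{45})+\epsilon e^{56}\big)=\epsilon\rho_0$, every solution of $d\nu=\epsilon\rho_0$ differs from it by a closed two-form; this already fixes the shape of $\nu$ up to the seven free coefficients $b_1,\dots,b_7$.

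\emph{Normalising $\rho$.} By Lemma \ref{le:quarticinvariant}, $\rho$ is an $\SL(3,\bC)$-structure iff $\lambda(\rho)<0$, and a short computation shows this forces the coefficient of $e^{146}-e^{236}-e^{345}$ to be non-zero, since otherwise $K_\rho(e_6)=0$ and $K_\rho$ is singular. Using this coefficient as a pivot, I would apply in a suitable order the unipotent automorphisms generated by the off-diagonal entries $f_{43},f_{53},f_{54},f_{51},f_{64},f_{61},f_{62},f_{63}$ of \eqref{eq:derivationsn9} to clear the six components along $e^{123},e^{124},e^{134},e^{135},e^{234}$ and $e^{126}+e^{145}$, and the two diagonal parameters $f_{44},f_{55}$ to scale the two surviving coefficients to a common value; the sign of $\lambda(\rho)$ fixes their relative sign, and the overall sign is recorded by $\epsilon\in\{\pm1\}$. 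This brings $\rho$ to $\epsilon\rho_0$ with $\rho_0=e^{125}+e^{146}-e^{236}-e^{345}$. One then reads off the induced $J=J_{\rho_0}$, namely $Je_1=-e_3$, $Je_2=-e_4$, $Je_5=e_6$ (with $J^2=-\mathrm{id}$), and checks that the $J$-invariant two-forms are nine-dimensional. As $\omega$ must be of type $(1,1)$ it lies in this space; the residual stabiliser $S=\{D\in\mathrm{Der}(\mfn_9):D.\rho_0=0\}$, which a computation shows is the two-dimensional abelian algebra spanned by $D_1\colon e_1\mapsto e_2,\,e_3\mapsto e_4$ and $D_2\colon e_1\mapsto e_5,\,e_3\mapsto-e_6$, is used to annihilate the coefficients of $e^{14}+e^{23}$ and $e^{16}+e^{35}$, leaving the stated seven-parameter $\omega$. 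Finally, positivity of $g=\omega(J\cdot,\cdot)$ gives $g(e_1,e_1)=a_1$, $g(e_2,e_2)=a_2$, $g(e_5,e_5)=-a_3$, whence $a_1a_2>0$ and $a_1a_3<0$ (the sign alternative being absorbed into $\epsilon$).

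\emph{The half-flat refinement.} Requiring $\nu$ to be of type $(1,1)$ is linear: computing $J^*\nu$ and setting the $(2,0)+(0,2)$-part to zero yields precisely $b_1=b_4=0$ and $b_6=b_3$. The half-flat condition reduces to $d(\omega^2)=2\,\omega\wedge d\omega=0$; expanding the six components of $\omega\wedge d\omega$ (four of which vanish identically) leaves exactly the two relations $a_5a_7-a_3a_4=0$ and $a_2a_5+a_4a_7=0$. These are exactly what is needed to run $S$: since $D_1,D_2$ commute, fix $a_1,a_2,a_3,a_6,a_7$, and act by $a_5\mapsto a_5-ta_6$, $a_4\mapsto a_4+sa_6$ while producing $e^{14}+e^{23}$- and $e^{16}+e^{35}$-coefficients equal to $-ta_2+sa_7$ and $-ta_7-sa_3$ respectively, the choice $t=a_5/a_6$, $s=-a_4/a_6$ sends $a_4,a_5$ to zero and, by the two relations above, keeps the other two coefficients zero, giving the seven-parameter form with $a_4=a_5=0$.

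\emph{Main obstacle.} The chief difficulty is the bookkeeping in the reduction of $\rho$ to $\epsilon\rho_0$: the eliminating automorphisms must be ordered so that clearing one coefficient does not reintroduce an earlier one, and one must verify throughout that the pivot survives and that the relative sign of the two final coefficients is the one dictated by $\lambda(\rho)<0$. A secondary point is the degenerate locus $a_6=0$ of the half-flat step, where $D_1,D_2$ no longer move $a_4,a_5$; there one argues directly that the relations give $a_4a_5(a_2a_3+a_7^2)=0$, so that $a_2,a_3\neq0$ forces $a_4=a_5=0$ except on the special set $a_7^2=-a_2a_3$, which has to be ruled out or treated separately.
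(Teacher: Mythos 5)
Your proposal follows essentially the same route as the paper for the bulk of the lemma: the eight-dimensional space of exact three-forms and seven-dimensional space of closed two-forms, the particular primitive $\tfrac{\epsilon}{2}(e^{26}+e^{45})+\epsilon e^{56}$, the non-vanishing of the coefficient of $e^{146}-e^{236}-e^{345}$ forced by $\lambda(\rho)<0$, the elimination of the remaining six components by unipotent automorphisms and the rescaling to $\epsilon\rho_0$ by the diagonal ones, the identification of the two-dimensional stabiliser of $\rho_0$ inside $\mathrm{Der}(\mfn_9)$ (your $D_1,D_2$ are exactly the generators the paper finds) and its use to remove the $e^{14}+e^{23}$- and $e^{16}+e^{35}$-components of $\omega$, the positivity argument for $a_1a_2>0$, $a_1a_3<0$, and the linear computation showing that $\nu$ of type $(1,1)$ means $b_1=b_4=0$, $b_6=b_3$. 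All of this is correct and matches the paper's proof.

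The genuine gap is in the half-flat step, and it is exactly the point you flag as unresolved. In fact the two relations $a_2a_5+a_4a_7=0$ and $a_5a_7-a_3a_4=0$ already \emph{force} $a_4=a_5=0$, with no further gauge action: since $a_1a_2>0$ gives $a_2\neq 0$, the first relation yields $a_5=-a_4a_7/a_2$, and substituting into the second gives $a_4(a_2a_3+a_7^2)=0$. The quantity $a_2a_3+a_7^2$ is nonzero (indeed negative) because $-(a_2a_3+a_7^2)$ equals the Gram minor $g(e_2,e_2)g(e_5,e_5)-g(e_2,e_5)^2$ of the positive-definite metric $g=\omega(J\cdot,\cdot)$ — the same positivity you already invoke for $a_1a_2>0$ and $a_1a_3<0$, and the same nonvanishing that makes the $2\times 2$ system solvable when you kill the $e^{14}+e^{23}$- and $e^{16}+e^{35}$-components in the preceding step. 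With this observation your ``special set $a_7^2=-a_2a_3$'' is empty, the case split $a_6=0$ versus $a_6\neq 0$ disappears, and the second application of the stabiliser is redundant: $a_4=a_5=0$ is a consequence of half-flatness in the chosen normal form, not a choice of gauge. As written, your argument leaves the locus where $a_6=0$ and $a_7^2=-a_2a_3$ untreated; the positivity observation above closes it and simplifies the whole step to the paper's two-line deduction.
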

\begin{proof}
First of all, observe that the most general exact three-form $\rho$ is given by
\begin{equation*}
\rho=c_1 e^{123}+c_2 e^{124}+c_3 e^{125}+c_4 e^{126}+c_5 e^{134}+c_6 e^{135}+c_4 e^{145}+c_7 e^{146}+c_8 e^{234}-c_7 e^{236}-c_7 e^{345}
\end{equation*}
for certain $c_i\in \bR$, $i=1,\ldots,8$. The quartic invariant $\lambda(\rho)$ of $\rho$ computes to be equal to
\begin{equation*}
\lambda(\rho)=-4 c_7^2 (c_3 c_7-c_4^2) \left(e^{1234567}\right)^{\otimes 2}.
\end{equation*}
As $\lambda(\rho)$ has to be negative, we surely must have $c_7\neq 0$. Now exponentials of matrices as in \eqref{eq:derivationsn9} give automorphisms of $\mfn_9$. We consider first the automorphism $F_1:=\exp(A_1)$ for the matrix $A_1$ as in \eqref{eq:derivationsn9} with $f_{4,3}=0$, $f_{4,4}=0$, $f_{5,5}=0$, $f_{6,1}=0$, $f_{6,2}:=0$, $f_{6,3}=0$ and $f_{5,1}=-\frac{f_{5,3}\, f_{5,4}}{2}$. For notational simplicity, we set $a:=f_{5,3}$, $b:=f_{5,4}$, $s:=f_{6,4}$ and so have
\begin{equation*}
A_1:=\left(\begin{smallmatrix}
0 & 0& 0 & 0 & 0 & 0 \\
0 & 0& 0 & 0 & 0 & 0 \\
0 & 0& 0 & 0 & 0 & 0 \\
a & b& 0 & 0 & 0 & 0 \\
-\frac{ab}{2} & s& a & b & 0 & 0 \\
0 & 0 & 0 & s  &b & 0 \\
\end{smallmatrix}
\right)
\end{equation*}
The exponential of this matrix is easily computed to be
\begin{equation*}
F_1:=\left(\begin{smallmatrix}
1 & 0& 0 & 0 & 0 & 0 \\
0 & 1& 0 & 0 & 0 & 0 \\
0 & 0& 1 & 0 & 0 & 0 \\
a & b& 0 & 1 & 0 & 0 \\
0 & S& a & b & 1 & 0 \\
A & B & C & S  &b & 1 \\
\end{smallmatrix}
\right)
\end{equation*}
with $A=\tfrac{1}{2}as-\tfrac{1}{12}ab^2$, $B=bs+\tfrac{b^3}{6}$, $C=\tfrac{ab}{2}$ and $S=s+\tfrac{1}{2}b^2$. Then
\begin{equation*}
(F_1^*\rho)(e_1,e_2,e_6)=c_4+bc_7,\qquad (F_1^*\rho)(e_1,e_3,e_5)= c_6+a c_7,\qquad
(F_1^*\rho)(e_2,e_3,e_4)=c_8-2 s c_7
\end{equation*}
So, setting $a:=-\tfrac{c_6}{c_7}$ and $b:=-\tfrac{c_4}{c_7}$ and $s:=\tfrac{c_8}{2 c_7}$, we get that $(F_1^*\rho)(e_1,e_4,e_5)=(F_1^*\rho)(e_1,e_2,e_6)=0$, $(F_1^*\rho)(e_1,e_3,e_5)=0$ and $(F_1^*\rho)(e_2,e_3,e_4)=0$, i.e.\ we have
\begin{equation*}
F_1^*\rho=\tilde{c}_1 e^{123}+\tilde{c}_2 e^{124}+\tilde{c}_3 e^{125}+\tilde{c}_4 e^{134}+\tilde{c}_5\left( e^{146}-e^{236}- e^{345}\right),
\end{equation*}
for certain $\tilde{c}_i\in \bR$, $i=1,\ldots,5$, now with $\tilde{c}_5\neq 0$ (in fact, $\tilde{c}_5=c_7$). Next, we consider the automorphism $F_2:=\exp(A_2)$ with the matrix $A_2$ as in \eqref{eq:derivationsn9} with $f_{4,3}:=0$, $f_{4,4}=0$, $f_{5,5}=0$, $f_{5,3}=0$, $f_{5,4}=0$, $f_{5,1}=0$ and $f_{6,4}=0$, i.e.\ we have
\begin{equation*}
A_2:=\left(\begin{smallmatrix}
0 & 0& 0 & 0 & 0 & 0 \\
0 & 0& 0 & 0 & 0 & 0 \\
0 & 0& 0 & 0 & 0 & 0 \\
0 & 0& 0 & 0 & 0 & 0 \\
0 & 0& 0 & 0 & 0 & 0 \\
p & q & r & 0  &0 & 0 \\
\end{smallmatrix}
\right),
\end{equation*}
where we set $p:=f_{6,1}$, $q:=f_{6,2}$ and $r:=f_{6,3}$. With $F_2:=\exp(A_2)=I_6+A_2$, we obtain
\begin{equation*}
(F_2^*F_1^*\rho)(e_1,e_2,e_3)=\tilde{c}_1-\tilde{c}_5\, p,\qquad
(F_2^*F_1^*\rho)(e_1,e_2,e_4)=\tilde{c}_2-\tilde{c}_5\, q,\qquad
(F_2^*F_1^*\rho)(e_1,e_3,e_4)=\tilde{c}_4-\tilde{c}_5\, r.
\end{equation*}
Thus, setting $p:=\tfrac{\tilde{c}_1}{\tilde{c}_5}$, $q:=\tfrac{\tilde{c}_2}{\tilde{c}_5}$ and $r:=\tfrac{\tilde{c}_4}{\tilde{c}_5}$, we get that
\begin{equation*}
F_2^*F_1^*\rho= A e^{125}+B \left( e^{146}-e^{236}-e^{345}\right).
\end{equation*}
Then $\lambda(F_2^*F_1^*\rho)=-4 A B^3$, i.e.\ we must have $A\cdot B>0$. It is now fairly easy to see that the exponential $F_3:=\exp(A_3)$ of a diagonal matrix $A_3$ as in \eqref{eq:derivationsn9} allows to normalise $A=B\in \{-1,1\}$, i.e.\ we have
\begin{equation*}
(F_3^*F_2^*F_1^*\rho)=\epsilon(e^{125}+e^{146}-e^{236}- e^{345})=\epsilon\rho_0
\end{equation*}
for some $\epsilon\in \{-1,1\}$. 

From now on, we will use again $\rho$ for $F_3^*F_2^*F_1^*\rho$, so that $\rho=\epsilon \rho_0$. Note that $\epsilon (e_1,-e_3,e_2,-e_4,e_5,e_6)$ or $\epsilon (e_1,e_3,e_2,e_4,e_5,-e_6)$ is an oriented adapted basis for $\rho=\epsilon \rho_0$, depending on the orientation induced by $\omega^3$. Hence, the induced almost complex structure $J=J_{\rho}$ is either given by $J_0 e_1=-e_3$, $J_0 e_2=-e_4$ and $J_0 e_5=e_6$ or by $-J_0$. A straightforward computation shows that a two-form $\nu\in \Lambda^2 \mfn_9^*$ with $d\nu=\rho$ has to be as claimed and $\nu$ is of type $(1,1)$ precisely when $b_1=b_4=0$ and $b_6=b_3$. 

Next, we are interested in bringing $\omega$ into a canonical form.
 To this aim, note first that $\omega$ has to be a $(1,1)$-form with respect to $J_0$ and so
\begin{equation*}
\begin{split}
\omega\ =& a_1 e^{13}+a_2 e^{24}+a_3 e^{56}+a_4 \left(e^{12}+e^{34}\right)+a_5 \left(e^{14}+e^{23}\right)+a_6 \left(e^{15}-e^{36}\right) \\[-2pt]
&+a_7 \left(e^{16}+e^{35}\right)+a_8 \left(e^{25}-e^{46}\right)+a_9 \left(e^{26}+e^{45}\right)
\end{split}
\end{equation*}
for certain $a_1,\ldots,a_9\in \bR$. Observe that $a_1=\omega(e_1,e_3)=\omega(e_1,\mp J e_1)=\pm g(e_1,e_1)$, $a_2=\omega(e_2,e_4)=\pm g(e_2,e_2)$ and $a_3=\omega(e_5,e_6)=\mp g(e_5,e_5)$, and so $a_1 a_2>0$, $a_1 a_3<0$ as claimed.

In order to bring $\omega$ into a form with less parameters without changing $\rho$, we need to look at those matrices $A$ in \eqref{eq:derivationsn9} that are in the Lie algebra $\mfa$ of the stabiliser group of $\rho_0$.
Such an $A$ has to commute with $J_0$, which is the case if and only if
$f_{5,5}=f_{4,4}$, $f_{5,3}=0$, $f_{5,4}=0$, $f_{6,1}=0$, $f_{6,3}=-f_{5,1}$, $f_{6,2}=0$, $f_{6,4}=0$. Moreover, the complex $\pm J_0$-trace of $A$ must be equal to zero, which additionally gives us $f_{4,4}=0$. So $A$ is given by
\begin{equation*}
A=\left(\begin{smallmatrix} 0 & 0 & 0 &0 & 0 & 0 \\
x & 0 & 0 & 0 & 0 & 0 \\
0 & 0 & 0 & 0 & 0 & 0 \\
0& 0 & x & 0 & 0 & 0 \\
y & 0 &0 & 0 & 0 & 0 \\
0 & 0 & -y & 0 & 0 & 0
\end{smallmatrix}\right)
\end{equation*}
for $x:=f_{4,3}$ and $y:=f_{5,1}$. Then $F:=\exp(A)=I_6+A$ and
\begin{equation*}
(F^*\omega)(e_1,e_4)=a_5+x a_2-y a_9,\qquad (F^*\omega)(e_1,e_6)=a_7+x a_9+y a_3.
\end{equation*}
Now $g(e_2,e_2)=\pm a_2$, $g(e_5,e_5)=\mp a_3$, as we observed above, and $g(e_2,e_5)= \omega(e_2,Je_5)=\pm\omega(e_2,e_6)=\pm a_9$. Since any minor of $g$ has to be non-zero, we get
$0\neq -(g(e_2,e_2)g(e_5,e_5)-g(e_2,e_5)^2)=a_2 a_3+a_9^2$. Thus, setting
\begin{equation*}
x:=-\frac{a_3 a_5+a_7 a_9}{a_2 a_3+a_9^2},\qquad y:=-\frac{a_2 a_7-a_5 a_9}{a_2 a_3+a_9^2}
\end{equation*}
yields $(F^*\omega)(e_2,e_3)=(F^*\omega)(e_1,e_4)=0$ and $(F^*\omega)(e_3,e_5)=(F^*\omega)(e_1,e_6)=0$. Hence, renaming $F^*\omega$ by $\omega$ and using again coefficients labeled $a_1,\ldots,a_7$, we have
\begin{equation*}
\omega=a_1 e^{13}+a_2 e^{24}+a_3 e^{56}+a_4 (e^{12}+e^{34})+a_5 \left(e^{15}-e^{36}\right)+a_6 \left(e^{25}-e^{46}\right)+a_7 \left(e^{26}+e^{45}\right).
\end{equation*}
Then
\begin{equation*}\ts
d\left(\frac{1}{2}\omega^2\right)=(a_2 a_5+a_4 a_7+(a_5 a_7-a_3 a_4))e^{123435}+(a_5 a_7-a_3 a_4)e^{12356},
\end{equation*}
i.e.\ $(\omega,\rho)$ is half-flat if and only if
\begin{equation*}
a_2 a_5+a_4 a_7=0,\qquad a_5 a_7-a_3 a_4=0.
\end{equation*}
The first equation gives us $a_5=-\tfrac{a_4 a_7}{a_2}$ and inserting this into the second equation yields
\begin{equation*}
0= a_5 a_7-a_3 a_4=-\frac{a_4 a_7^2}{a_2}-\frac{a_2 a_3 a_4}{a_2}=-\frac{a_4(a_2a_3+a_7^2)}{a_2}
\end{equation*}
Since $a_7$ plays the role of the former $a_9$, we showed above that $a_2 a_3+a_7^2\neq 0$. Thus,
$a_4=0$ and so $a_5=0$. The equations $a_2 a_5+a_4 a_7=0$, $a_5 a_7-a_3 a_4=0$ are now fulfilled, so this finishes the proof.
\end{proof}
For the rest of this subsection, we assume that $(\omega,\rho,\nu)$ is as in \eqref{eq:normalformn9} with $(\omega,\rho)$ being half-flat and $\nu$ being a primitive form of type $(1,1)$. 
Moreover, we assume that \eqref{eq:1} and \eqref{eq:2} are valid. We show that these assumptions give rise to a contradiction. First note the following:
\begin{itemize}
\item[(i)]
We may 
assume that $\omega^3\in \bR_+\cdot e^{123456}$, i.e.\ $\omega$ induces the orientation in which the ordered basis $(e_1,\ldots,e_6)$ is oriented. This follows from the observation that with $(\omega,\rho,\nu,f)$ also $(-\omega,\rho,\nu,-f)$ satisfies \eqref{eq:1} and \eqref{eq:2}.
\item[(ii)]
Moreover, we may assume that $\epsilon=1$ as with $(\omega,\rho,\nu,f)$ also $(\omega,-\rho,-\nu,-f)$ fulfills \eqref{eq:1} and \eqref{eq:2}
\end{itemize}
Using these simplifications, we obtain:
\begin{lemma}\label{le:someequalitiesn9}
We have
\begin{equation*}
\begin{split}
f_{5,3}&=f_{6,1}=f_{6,4}=0,\qquad f_{6,3}=-f_{5,1},\qquad f_{6,2}=2 f_{5,4},\\
a_3&=f_{4,4}-3 f_{5,5},\qquad a_6=f_{5,4},\qquad a_7=-\frac{f_{4,4}+f_{5,5}}{2}
\end{split}
\end{equation*}
and $b_3=b_5=0$ or $f_{5,4}=0$,
\end{lemma}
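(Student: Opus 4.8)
The plan is to derive every assertion from equation \eqref{eq:1} alone, namely $f.\nu=\omega$, by comparing coefficients with respect to the basis $(e^{ij})_{1\le i<j\le 6}$ of $\Lambda^2\mfn_9^*$. First I would record how a derivation acts on forms: writing the general element of \eqref{eq:derivationsn9} as $f(e_j)=\sum_i f_{i,j}e_i$, the induced action on $\mfn_9^*$ is $f.e^i=-\sum_j f_{i,j}e^j$, and it extends to $\Lambda^2\mfn_9^*$ by the Leibniz rule $f.(e^i\wedge e^j)=(f.e^i)\wedge e^j+e^i\wedge(f.e^j)$ noted before Theorem \ref{th:eigenform}. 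Substituting the explicit entries of \eqref{eq:derivationsn9} gives $f.e^1,\dots,f.e^6$ in closed form.

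Next I would expand $f.\nu$ term by term, using the normal form \eqref{eq:normalformn9} specialised to the half-flat, primitive $(1,1)$ case, i.e.\ with $a_4=a_5=0$, $b_1=b_4=0$, $b_6=b_3$, and $\epsilon=1$ by simplification (ii). Collecting the coefficient of each of the fifteen basis two-forms and setting it equal to the corresponding coefficient of $\omega=a_1e^{13}+a_2e^{24}+a_3e^{56}+a_6(e^{25}-e^{46})+a_7(e^{26}+e^{45})$ produces a linear system, and the lemma becomes a matter of solving it. All the stated identities then fall out from individual components and suitable differences: the $e^{36}$-component is $-f_{5,3}$, so $f_{5,3}=0$; the $e^{56}$-component gives $a_3=f_{4,4}-3f_{5,5}$; the $e^{46}$-component gives $a_6=f_{5,4}$. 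Since both the $e^{26}$- and the $e^{45}$-components must equal $a_7$ and differ only by the sign of $f_{6,4}$, I get $f_{6,4}=0$ and $a_7=-\tfrac12(f_{4,4}+f_{5,5})$; comparing $e^{25}$ with $e^{46}$ yields $f_{6,2}-f_{5,4}=a_6=f_{5,4}$, whence $f_{6,2}=2f_{5,4}$; and subtracting the $e^{16}$- from the $e^{35}$-equation gives $f_{6,3}=-f_{5,1}$.

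It remains to produce $f_{6,1}=0$ and the final disjunction. Using $f_{5,3}=0$, the $e^{34}$-component reads $b_5f_{5,4}=0$ and the $e^{15}$-component reads $f_{6,1}=b_5f_{5,4}$, so $f_{6,1}=0$. For the last claim, substituting $f_{5,3}=f_{6,1}=0$ and $f_{6,2}=2f_{5,4}$ into the $e^{12}$-component simplifies it to $-f_{5,4}(b_3+2b_5)$, which must vanish, giving $f_{5,4}=0$ or $b_3+2b_5=0$; combined with $b_5f_{5,4}=0$ this forces either $f_{5,4}=0$, or else $b_5=0$ and hence $b_3=0$, which is exactly the asserted alternative.

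I expect the main obstacle to be purely organisational rather than conceptual: the computation of $f.\nu$ is long but entirely mechanical, and the real care lies in tracking the fifteen coefficient equations without error and in spotting the correct pairs of components to subtract (the $e^{26}/e^{45}$, $e^{25}/e^{46}$, and $e^{16}/e^{35}$ pairs in particular). It is worth noting that neither \eqref{eq:2} nor the half-flat condition beyond the chosen normal form is needed for this lemma.
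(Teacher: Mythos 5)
Your proposal is correct and follows essentially the same route as the paper: the paper also derives every identity solely from $f.\nu=\omega$ by evaluating $\omega-f.\nu$ on exactly the pairs of basis vectors you single out ($e_3,e_6$; $e_3,e_4$ and $e_1,e_5$; $e_3,e_5$ and $e_1,e_6$; $e_2,e_6$ and $e_4,e_5$; $e_2,e_5$ and $e_4,e_6$; $e_5,e_6$; $e_1,e_2$), obtaining the same component equations, the same relation $b_5f_{5,4}=0$, and the same final factorisation $f_{5,4}(b_3+2b_5)=0$ yielding the disjunction. Your closing observation that neither \eqref{eq:2} nor the half-flatness beyond the normal form is needed is also accurate.
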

\begin{proof}
	First of all,
	\begin{equation*}
	0=(\omega-f.\nu)(e_3,e_6)=f_{5,3}, \quad 0=(\omega-f.\nu)(e_3,e_4)=b_5 f_{5,4}-\tfrac{f_{5,3}}{2},\quad
	0=(\omega-f.\nu)(e_1,e_5)=b_5 f_{5,4}-f_{6,1}+\tfrac{f_{5,3}}{2},
	\end{equation*}
i.e. $f_{5,3}=f_{6,1}=0$ and $b_5f_{5,4}=0$. Moreover, we get
\begin{equation*}
\begin{split}
0&=(\omega-f.\nu)(e_3,e_5)=f.\nu= -f_{6,3}+\tfrac{f_{4,3}}{2}+b_5(3 f_{5,5}-2 f_{4,4}),\\
0&=(\omega-f.\nu)(e_1,e_6)=f.\nu= f_{5,1}+\tfrac{f_{4,3}}{2}+b_5(3 f_{5,5}-2 f_{4,4}),
\end{split}
\end{equation*}
which yields $f_{6,3}=-f_{5,1}$. Furthermore,
\begin{equation*}
\begin{split}
0&=(\omega-f.\nu)(e_2,e_6)=a_7+\frac{f_{4,4}+f_{5,5}}{2}+f_{6,4},\\
0&=(\omega-f.\nu)(e_4,e_5)=a_7+\frac{f_{4,4}+f_{5,5}}{2}- f_{6,4},
\end{split}
\end{equation*}
which gives $f_{6,4}=0$ as well as $a_7=-\tfrac{f_{4,4}+f_{5,5}}{2}$.
Next, we have
\begin{equation*}
0=(\omega-f.\nu)(e_2,e_5)=a_6- f_{6,2}+f_{5,4},\qquad 0= (\omega-f.\nu)(e_4,e_6)=-a_6+f_{5,4},
\end{equation*}
i.e.\ $f_{6,2}=2 f_{5,4}$ and $a_6= f_{5,4}$. Moreover, we get
\begin{equation*}
\begin{split}
0=(\omega-f.\nu)(e_5,e_6)=a_3-(f_{4,4}-3f_{5,5}),\qquad 0=(\omega-f.\nu)(e_1,e_2)=f_{5,4}(b_3+2b_5),
\end{split}
\end{equation*}
i.e. $a_3=f_{4,4}-3f_{5,5}$, and, since also $b_5f_{5,4}=0$, $f_{5,4}=0$ or $b_3=b_5=0$.
\end{proof}

\begin{lemma}\label{le:f54=0}
	In Lemma \ref{le:someequalitiesn9}, we must have $f_{5,4}=0$.
	\end{lemma}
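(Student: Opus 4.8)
The plan is to argue by contradiction, so suppose $f_{5,4}\neq 0$. The last clause of Lemma~\ref{le:someequalitiesn9} then forces $b_3=b_5=0$, and since $b_6=b_3$ in the normal form \eqref{eq:normalformn9} also $b_6=0$. Combined with the identities of Lemma~\ref{le:someequalitiesn9} (in particular $a_6=f_{5,4}$, $a_3=f_{4,4}-3f_{5,5}$ and $a_7=-\tfrac12(f_{4,4}+f_{5,5})$), the forms $\omega,\nu$ and the derivation $f$ become explicit functions of the few remaining parameters $f_{4,4},f_{5,5},f_{5,4},f_{4,3},b_2,b_7$. As a first step I would exhaust equation \eqref{eq:1}: eleven of its fifteen scalar components were already used in Lemma~\ref{le:someequalitiesn9}, and evaluating the remaining four, $(\omega-f.\nu)(e_i,e_j)=0$ for $(i,j)\in\{(1,3),(1,4),(2,3),(2,4)\}$, is a short computation yielding
\[
a_1=3(f_{4,4}-f_{5,5})\,b_2,\qquad a_2=(f_{5,5}-3f_{4,4})\,b_7,\qquad f_{5,1}=2f_{4,3}\,b_7 .
\]

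The core of the argument is equation \eqref{eq:2}, which I would rewrite as $\omega\wedge f.\omega-\omega\wedge\nu=d_{\mfh}\hat{\rho}$ and treat componentwise. Using $\omega\wedge f.\omega=\tfrac12 f.(\omega^2)$ together with the derivation action $f.e^i=-\sum_l f_{il}e^l$, one computes $\omega\wedge f.\omega$, $\omega\wedge\nu$ and, since here $\rho=\rho_0$ and hence $\hat{\rho}=\hat{\rho}_0$, the four-form $d_{\mfh}\hat{\rho}=3e^{1234}+e^{1236}-e^{1345}$. The two components I would extract are $e^{1356}$ and $e^{1346}$, where the right-hand side vanishes:
\[
a_1a_3(4f_{4,4}-6f_{5,5})=a_1+a_3b_2,\qquad -a_1a_3f_{5,4}-a_1a_6(3f_{4,4}-5f_{5,5})+a_6b_2=0 .
\]
These are chosen precisely because, unlike the bulk of the identity, they involve neither $f_{4,3}$ nor $b_7$. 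Since $a_6=f_{5,4}\neq0$, the second relation divides by $f_{5,4}$ and, using $a_3=f_{4,4}-3f_{5,5}$, collapses to $b_2=4a_1(f_{4,4}-2f_{5,5})$.

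Finally I would eliminate. Because $a_1a_2>0$ we have $a_1\neq0$, so feeding $b_2=4a_1(f_{4,4}-2f_{5,5})$ into $a_1=3(f_{4,4}-f_{5,5})b_2$ and cancelling $a_1$ gives $1=12(f_{4,4}-f_{5,5})(f_{4,4}-2f_{5,5})$, while feeding it into the $e^{1356}$ relation gives $1=2f_{5,5}(f_{4,4}-3f_{5,5})$. Writing $p=f_{4,4}$, $q=f_{5,5}$, these read $12(p-q)(p-2q)=1$ and $2q(p-3q)=1$; since $q\neq0$, eliminating $p$ via $p=(1+6q^2)/(2q)$ yields $3+17q^2+24q^4=0$, which is impossible over $\bR$. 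This contradiction proves $f_{5,4}=0$.

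The hard part will be the sign-bookkeeping in the two component computations of \eqref{eq:2}: getting $\tfrac12 f.(\omega^2)$ and $d_{\mfh}\hat{\rho}$ right, each of which requires care with the exterior-algebra reorderings. Everything else is light. It is worth noting that this route is robust to the residual $J\leftrightarrow -J$ ambiguity of Lemma~\ref{le:normalformsn9}, since it changes $\hat{\rho}$ only by a sign and thereby alters $d_{\mfh}\hat{\rho}$ only in its $e^{1234},e^{1236},e^{1345}$ components—never in the $e^{1356},e^{1346}$ components used above. Once those two equations are in hand the elimination is immediate, and the whole argument stays comfortably within hand computation, as the authors emphasise.
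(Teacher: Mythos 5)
Your proof is correct -- I have checked the two component computations and they come out exactly as you state: the $e^{1356}$-coefficient of $\omega\wedge f.\omega-\omega\wedge\nu-d_{\mfh}\hat{\rho}$ is $a_1a_3(4f_{4,4}-6f_{5,5})-a_1-a_3b_2$ and the $e^{1346}$-coefficient is $-a_1a_3f_{5,4}-a_1a_6(3f_{4,4}-5f_{5,5})+a_6b_2$, neither involving $f_{4,3}$ or $b_7$, and the elimination does terminate in $24q^4+17q^2+3=0$. The strategy is the same as the paper's (extract polynomial relations from components of \eqref{eq:1} and \eqref{eq:2} and show they are inconsistent), and you even share the $e^{1356}$-component and, implicitly, the relation $12(f_{4,4}-f_{5,5})(f_{4,4}-2f_{5,5})=1$, which the paper obtains instead from the $e^{1235}$-component. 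But your execution is genuinely leaner in two respects. First, the paper extracts the additional identity $f_{5,1}=-\tfrac{f_{4,3}}{2}$ from the $(e_3,e_5)$/$(e_1,e_6)$ components (misprinted there as $(e_1,e_5)$), combines it with the $(e_1,e_4)$ component to get $f_{4,3}(4b_7+1)=0$, and then runs a separate sub-argument via the $e^{1456}$-component to force $f_{4,3}=0$ before its main computation; by choosing the $e^{1346}$-component, which is blind to $f_{4,3}$ and $b_7$, you bypass this entire step (your count of ``eleven components already used'' slightly understates what those components contain, but since you never need $f_{5,1}=-\tfrac{f_{4,3}}{2}$ this is harmless). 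Second, the paper's $e^{1356}$-relation factors as $2b_2(2f_{4,4}-3f_{5,5})(3f_{4,4}^2-12f_{4,4}f_{5,5}+9f_{5,5}^2-1)=0$ and requires a two-case analysis, each case ending in a sign contradiction, whereas your substitution $b_2=4a_1(f_{4,4}-2f_{5,5})$ turns the same relation into the single equation $2f_{5,5}(f_{4,4}-3f_{5,5})=1$ and the contradiction is immediate from positivity of the resulting quartic. The trade-off is only that your route leans on $a_6=f_{5,4}$ being the common factor of the $e^{1346}$-relation, which must be divided out; this is exactly where the hypothesis $f_{5,4}\neq 0$ enters, so the logic is sound.
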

\begin{proof}
Assume that $f_{5,4}\neq 0$. Then $b_3=b_5=0$ by Lemma \ref{le:someequalitiesn9} and so
\begin{equation*}
\begin{split}
0&=(\omega-f.\nu)(e_1,e_5)=\tfrac{f_{4,3}}{2}+f_{5,1},\qquad 0=(\omega-f.\nu)(e_1,e_3)=a_1+3 b_2 (f_{5,5}-f_{4,4}),\\
 0&=(\omega-f.\nu)(e_2,e_4)=a_2+b_7(3f_{4,4}-f_{5,5}),
 \end{split}
\end{equation*}
i.e. $a_1=3 b_2 (f_{4,4}-f_{5,5})$, $a_2=b_7(f_{5,5}-3f_{4,4})$ and $f_{5,1}=-\frac{f_{4,3}}{2}$.
Imposing these identities, we get
\begin{equation*}
0=(\omega-f.\nu)(e_1,e_4)=\frac{f_{4,3}(1+4 b_7)}{4},
\end{equation*}
and so either $f_{4,3}=0$ or $b_7=-\tfrac{1}{4}$ holds.

We show that $f_{4,3}=0$ and argue by contradiction, i.e.\ we assume that $f_{4,3}\neq 0$ and so $b_7=-\tfrac{1}{4}$. Then \eqref{eq:2} gives us
\begin{equation*}
0=(f.\omega\wedge \omega-\omega\wedge\nu-d\hat{\rho})(e_1,e_4,e_5,e_6)=-f_{4,3}( (f_{4,4}-f_{5,5})^2+f_{5,4}^2),
\end{equation*}
so that $f_{5,5}=f_{4,4}$ and $f_{5,4}=0$ due to $f_{4,3}\neq 0$. But then
one checks that $\omega^3=0$, a contradiction. Hence, we must have $f_{4,3}=0$.

Assuming $f_{4,3}=0$, one computes
\begin{equation}\label{eq:furthereqns}
\begin{split}
0&=(f.\omega\wedge \omega-\omega\wedge\nu-d\hat{\rho})(e_1,e_2,e_3,e_5)=-\epsilon b_2 f_{5,4} \left(12 f_{4,4}^2- 36 f_{4,4} f_{5,5}+24 f_{5,5}^2-1\right) \\
0&=(f.\omega\wedge \omega-\omega\wedge\nu-d\hat{\rho})(e_1,e_3,e_5,e_6)=2 b_2 (3 f_{4,4}^2 - 12 f_{4,4} f_{5,5} + 9 f_{5,5}^2 - 1)(2f_{4,4} - 3f_{5,5})
\end{split}
\end{equation}
One checks that $b_2=0$ implies $\omega^3= 0$, and so we must have $b_2\neq 0$. Since $f_{5,4}\neq 0$ by assumption, \eqref{eq:furthereqns} yields
\begin{equation*}
\begin{split}
0&=12 f_{4,4}^2- 36 f_{4,4} f_{5,5}+24 f_{5,5}^2-1,\\
0&=(3 f_{4,4}^2 - 12 f_{4,4} f_{5,5} + 9 f_{5,5}^2 - 1)(2f_{4,4} - 3f_{5,5})
\end{split}
\end{equation*}
So either $3 f_{4,4}^2 - 12 f_{4,4} f_{5,5} + 9 f_{5,5}^2 - 1=0$ or $2f_{4,4}-3f_{5,5}=0$.
However, both cases give us a contradiction:

Namely, if $3 f_{4,4}^2 - 12 f_{4,4} f_{5,5} + 9 f_{5,5}^2 - 1=0$, then
\begin{equation*}
3 f_{4,4}^2 - 12 f_{4,4} f_{5,5} + 9 f_{5,5}^2=1=12 f_{4,4}^2- 36 f_{4,4} f_{5,5}+24 f_{5,5}^2,
\end{equation*}
and so
\begin{equation*}
0=9 f_{4,4}^2 - 24 f_{4,4} f_{5,5} + 15 f_{5,5}^2=(3f_{4,4}-4 f_{5,5})^2-f_{5,5}^2,
\end{equation*}
i.e.
\begin{equation*}
3 f_{4,4}-4 f_{5,5}=\pm f_{5,5}.
\end{equation*}
So either $f_{4,4}-f_{5,5}=0$ or $f_{5,5}=\frac{3}{5} f_{4,4}$. However, in the first case, one checks that $\omega^3=0$, a contradiction. Thus, we must have $f_{5,5}=\frac{3}{5} f_{4,4}$. But then
\begin{equation*}\ts
1=3 f_{4,4}^2 - 12 f_{4,4} f_{5,5} + 9 f_{5,5}^2= 3 f_{4,4}^2-\frac{36}{5} f_{4,4}^2+\frac{81}{25} f_{4,4}^2=-\frac{24}{25} f_{4,4}^2\leq 0,
\end{equation*}
again a contradiction. 

Consider now the case $2f_{4,4}-3f_{5,5}=0$. Then $f_{5,5}=\frac{2}{3} f_{4,4}$ and so
\begin{equation*}
\begin{split}
1&=\ts 12 f_{4,4}^2- 36 f_{4,4} f_{5,5}+24 f_{5,5}^2=-\frac{4}{3} f_{4,4}^2\leq 0,
\end{split}
\end{equation*}
which is a also a contradiction.

Thus, we must have $f_{5,4}=0$.
\end{proof}
To simplify the notation, we set from now on
\begin{equation*}
x:=f_{4,4},\qquad y:=f_{5,5},\qquad z:=f_{4,3}.
\end{equation*}
With this new notation, one gets:
\begin{lemma}\label{le:n9no3}
We have $a_2=(y-3 x) b_7$, $f_{5,1}=(2x-3y)b_5-\tfrac{z}{2}$, $x+y\neq 0$, $4 b_7+1\neq 0$, $b_2\neq 0$ and $a_1=\tfrac{b_2}{2(x+y)}$.
\end{lemma}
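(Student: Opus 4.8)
The plan is to extract scalar equations from the three conditions now at our disposal: the structure equations \eqref{eq:1} and \eqref{eq:2}, together with the primitivity $\nu\wedge\omega^2=0$ of $\nu$. This last condition is the decisive extra input anticipated in the introduction, and without it the system \eqref{eq:1}--\eqref{eq:2} alone admits spurious solutions. Throughout I would keep in play the non-degeneracy $\omega^3=-6a_1(a_2a_3+a_7^2)\,e^{123456}$, which by simplification (i) is a \emph{positive} multiple of $e^{123456}$, as well as the sign constraints $a_1a_2>0$ and $a_1a_3<0$ recorded in Lemma \ref{le:normalformsn9}. Writing $N:=a_2a_3+a_7^2$, these give $a_1\neq0$, $N\neq0$ and $a_1N<0$.

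First I would read off \eqref{eq:1} componentwise. By Lemma \ref{le:someequalitiesn9} and Lemma \ref{le:f54=0} the derivation $f$ now depends only on $x,y,z,f_{5,1}$, with $a_3=x-3y$ and $a_7=-\tfrac12(x+y)$. Evaluating $(f.\nu-\omega)$ on the pairs $(e_2,e_4)$, $(e_3,e_5)$, $(e_1,e_4)$ and $(e_1,e_3)$ yields respectively $a_2=(y-3x)b_7$, $f_{5,1}=(2x-3y)b_5-\tfrac{z}{2}$, $f_{5,1}=2yb_3+2zb_7$ and $a_1=3(x-y)b_2-2zb_3+2f_{5,1}b_5$. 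The first two are already two of the asserted identities. Next I would make \eqref{eq:2} explicit: using the oriented adapted basis of Lemma \ref{le:normalformsn9} one finds $\hat\rho=e^{126}+e^{235}-e^{145}-e^{346}$, hence $d\hat\rho=3e^{1234}+e^{1236}-e^{1345}$, and matching the coefficients of $e^{1234},e^{1356},e^{1236},e^{1246},e^{2456},e^{1456}$ in $\omega\wedge(f.\omega-\nu)$ against $d\hat\rho$ produces six scalar equations. The one I expect to be decisive is the $e^{2456}$-coefficient, which after substituting $a_2$, $a_3$ and $a_7$ factors neatly as $a_7\,(N-b_7-\tfrac14)=0$. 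In parallel, primitivity contributes the single equation $a_1a_2+a_1a_3b_7+a_1a_7+b_2N=0$; since $a_2+a_3b_7+a_7=-\tfrac12(x+y)(4b_7+1)$, this becomes $b_2N=-a_1a_7(4b_7+1)$, which already explains the appearance of the factor $4b_7+1$.

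The conclusions then follow from a case analysis on $a_7$ (equivalently on $x+y$). If $a_7\neq0$, the $e^{2456}$-equation forces $N=b_7+\tfrac14$, so $4b_7+1=4N\neq0$; substituting into the primitivity relation and cancelling $N$ gives $b_2=-4a_1a_7=2a_1(x+y)$, that is $a_1=\tfrac{b_2}{2(x+y)}$, while $b_2\neq0$ because $a_1\neq0$ and $x+y\neq0$. Thus all six statements hold once $x+y\neq0$ is known.

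The hard part will be excluding $a_7=0$, i.e.\ proving $x+y\neq0$, and this is where the auxiliary parameters $z,b_3,b_5,f_{5,1}$ must be managed carefully. When $a_7=0$ the primitivity relation collapses to $b_2N=0$, so $b_2=0$; the sign constraints then force $a_1>0$, $y>0$ and, via $a_1a_2>0$, $b_7>0$. The $e^{1356}$-equation pins down $y^2=\tfrac{1}{40}$, while the $e^{1246}$- and $e^{1456}$-equations give $b_5=4yf_{5,1}$ and $b_3=-4yb_7z$. Feeding these into the two \eqref{eq:1}-expressions for $f_{5,1}$ leads, when $z\neq0$, to $b_7=-\tfrac{5}{27}<0$, contradicting $b_7>0$; when $z=0$ they force $f_{5,1}=b_3=b_5=0$ and hence $a_1=0$, contradicting $a_1\neq0$. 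Either way $a_7\neq0$, which closes the argument. I anticipate that the bookkeeping in this last step, rather than any conceptual difficulty, is the genuine obstacle, exactly the kind of hand-checkable computation referred to in the introduction.
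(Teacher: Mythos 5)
Your proposal is correct and follows essentially the same route as the paper: both extract the two identities from \eqref{eq:1}, combine the $(e_2,e_4,e_5,e_6)$-component of \eqref{eq:2} with $\nu\wedge\omega^2=0$ and $\omega^3\neq 0$, and then dispose of a single degenerate case in which $b_2=0$ and $x+y=0$ hold simultaneously. The differences are only organizational — you branch on $x+y$ where the paper branches on $b_2$, and you close the degenerate case via the sign constraints $a_1a_2>0$, $a_1a_3<0$, $\omega^3>0$ rather than the paper's numerical pinning of $a_1$, $b_7$, $x$ — and your claimed coefficient equations (the $e^{2456}$, $e^{1356}$, $e^{1246}$, $e^{1456}$ components, the primitivity relation $a_1a_2+a_1a_3b_7+a_1a_7+b_2N=0$, and the $(e_1,e_4)$-, $(e_1,e_3)$-components of \eqref{eq:1}) all check out against the paper's.
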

\begin{proof}
Firstly
\begin{equation*}
0=(\omega-f.\nu)(e_2,e_4)=a_2+b_7 (3x-y) ,\quad 0=(\omega-f.\nu)(e_3,e_5)=(3y-2x)b_5 +\tfrac{z}{2} + f_{5,1},
\end{equation*}
i.e.\ $a_2=(y-3 x) b_7$, $f_{5,1}=(2x-3y)b_5-\tfrac{z}{2}$. Next, set
\begin{equation*}
A:=(12 b_7-1)x^2-(40 b_7+2) xy+(12 b_7-1)y^2.
\end{equation*}
Then
\begin{equation*}
0\neq \omega^3= \tfrac{3}{2} a_1 A e^{123456},
\end{equation*}
which implies $a_1\neq 0$ and $A\neq 0$. Since
\begin{equation*}
0= \nu\wedge\omega^2= \big(\tfrac{A b_2}{2}+a_1 (4b_7+1) (x+y) \big)e^{123456},
\end{equation*}
we either have $b_2=0$, and then $(4b_7+1) (x+y)=0$ as well, or $b_2\neq 0$, and so $x+y\neq 0$, $4b_7+1\neq 0$, and $A=-\tfrac{2 a_1 (4b_7+1)(x+y)}{b_2}$. Moreover, we have
\begin{equation}\label{eq:A}
0= \left(f.\omega\wedge \omega-d\hat{\rho}-\nu\wedge \omega\right)(e_2,e_4,e_5,e_6)=\tfrac{(x+y)(A+4b_7+1)}{2}
\end{equation}
Assume now first that $b_2=0$. Then we must have $x+y=0$, as otherwise $4b_7+1=0$ and so $(x+y)A=0$, a contradiction to $x+y\neq 0$ and $A\neq 0$. But then
\begin{equation*}
\begin{split}
0 &= \left(f.\omega\wedge \omega-d\hat{\rho}-\nu\wedge \omega\right)(e_1,e_2,e_3,e_6)=-1+\tfrac{a_1}{2},\\
0&=\left(f.\omega\wedge \omega-d\hat{\rho}-\nu\wedge \omega\right)(e_1,e_3,e_5,e_6)=a_1 (40x^2-1),
\end{split}
\end{equation*}
from which we obtain $a_1=2$ and $x=\delta \sqrt{\tfrac{1}{40}}$ for some $\delta\in \{-1,1\}$. But then
\begin{equation*}
0 = \left(f.\omega\wedge \omega-d\hat{\rho}-\nu\wedge \omega\right)(e_1,e_2,e_3,e_4)=\tfrac{12}{5}b_7-3,
\end{equation*}
i.e.\ $b_7=\tfrac{5}{4}$. Hence,
\begin{equation*}
\begin{split}
0 &= \left(f.\omega\wedge \omega-d\hat{\rho}-\nu\wedge \omega\right)(e_1,e_4,e_5,e_6)=\tfrac{z}{2}-\delta \tfrac{\sqrt{10}}{5}b_3,\\
0&=\left(f.\omega\wedge \omega-d\hat{\rho}-\nu\wedge \omega\right)(e_2,e_3,e_4,e_5)=-\tfrac{\delta}{8}(3\sqrt{10} b_5 - 2\delta z),
\end{split}
\end{equation*}
and so $b_3= \tfrac{5\delta}{2 \sqrt{10}} z$, $b_5=\tfrac{2\delta}{3\sqrt{10}} z$. However,
\begin{equation*}
0=(\omega-f.\nu)(e_1,e_4)=\tfrac{31}{24}z^2,
\end{equation*}
i.e.\ $z=0$, and so
\begin{equation*}
0=(\omega-f.\nu)(e_1,e_3)=2,
\end{equation*}
a contradiction.

Hence, we must have $b_2\neq 0$, $x+y\neq 0$, $4b_7+1\neq 0$ and $A=-\tfrac{2 a_1 (4b_7+1)(x+y)}{b_2}$. But then \eqref{eq:A} gives us $A=-(4b_7+1)$. Thus,
\begin{equation*}
\tfrac{2 a_1 (4b_7+1)(x+y)}{b_2}=-A=4b_7+1
\end{equation*}
and so, since $4b_7+1\neq 0$,
\begin{equation*}
a_1=\tfrac{b_2}{2(x+y)}.
\end{equation*}
\end{proof}
This allows us now to prove:
\begin{theorem}\label{th:noeigenformsn9}
	Let $\mfg$ be a seven-dimensional almost nilpotent Lie algebra with codimension-one nilpotent ideal isomorphic to $\mfn_9$. Then
	$\mfg$ does not admit a closed $\G_2$-eigenform.
\end{theorem}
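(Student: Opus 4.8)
The plan is to extract the remaining, so-far-unused information from the two equations \eqref{eq:1} and \eqref{eq:2} and show it is incompatible with the positivity of the induced inner product. By Lemmas \ref{le:normalformsn9}--\ref{le:n9no3} I may assume $\epsilon=1$, $f_{5,4}=0$ and $\omega^3\in\bR_+\cdot e^{123456}$; then the derivation $f$ is governed by $x:=f_{4,4}$, $y:=f_{5,5}$, $z:=f_{4,3}$ and $b_5$, with $f_{5,1}=(2x-3y)b_5-\tfrac z2$ and $f_{6,3}=-f_{5,1}$, while $\omega$ has only the four components $a_1e^{13}$, $a_2e^{24}$, $a_3e^{56}$, $a_7(e^{26}+e^{45})$, subject to $a_3=x-3y$, $a_7=-\tfrac{x+y}2$, $a_2=(y-3x)b_7$, $a_1=\tfrac{b_2}{2(x+y)}$, and to $x+y\neq0$, $b_2\neq0$, $4b_7+1\neq0$, together with $A:=(12b_7-1)(x^2+y^2)-(40b_7+2)xy=-(4b_7+1)$.

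First I would read off the components of \eqref{eq:1} not yet used, namely those on $e_1\wedge e_3$, $e_1\wedge e_4$ and $e_2\wedge e_3$. Because $\omega$ is so sparse, these become linear relations: the pairs $e_1\wedge e_4$ and $e_2\wedge e_3$ yield the same equation $yb_3+zb_7-\tfrac12 f_{5,1}=0$, whereas $e_1\wedge e_3$ gives $a_1=3(x-y)b_2-2zb_3+2f_{5,1}b_5$. Combined with $a_1=\tfrac{b_2}{2(x+y)}$ and the formula for $f_{5,1}$, this determines $b_3$ and constrains $z,b_5$ in terms of $x,y,b_2,b_7$, leaving only a handful of free parameters. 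I would then turn to the quadratic equation \eqref{eq:2}, $f.\omega\wedge\omega-d\hat\rho=\omega\wedge\nu$, and evaluate its still-unconsumed components on suitable four-tuples of basis vectors (such as $(e_1,e_2,e_3,e_4)$, $(e_1,e_3,e_5,e_6)$ and $(e_1,e_2,e_3,e_6)$, in the spirit of Lemma \ref{le:n9no3}). After substituting $A=-(4b_7+1)$ to eliminate the mixed quadratic in $x,y$, these collapse to a small polynomial system in $x,y,b_7$ (and $b_2$).

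The decisive step, and the place I expect the real difficulty, is ruling out every real solution of this system. The polynomial equations by themselves will generally be satisfiable, so the contradiction must come from the metric data: from $\omega^3\in\bR_+\cdot e^{123456}$ one gets $a_1(4b_7+1)<0$, and the definiteness of $g$ forces $a_1a_2>0$ and $a_1a_3<0$, that is $\tfrac{b_2b_7(y-3x)}{x+y}>0$ and $\tfrac{b_2(x-3y)}{x+y}<0$. As in Lemmas \ref{le:f54=0} and \ref{le:n9no3}, I anticipate that one of the vanishing quadratic expressions can be rewritten as a definite combination -- a negative multiple of a sum of squares, or a product of factors whose signs are already pinned down by the inequalities above -- so that it cannot vanish unless $\omega^3=0$, contradicting non-degeneracy. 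The bulk of the remaining labour is the case analysis on the signs of $x+y$, $x-3y$ and $y-3x$ needed to close every branch, and keeping the quadratic bookkeeping of \eqref{eq:2} under control; this is exactly the point at which, as the authors remark in the introduction, the computation sits at the edge of what can be verified by hand.
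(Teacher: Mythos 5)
Your overall strategy---work in the normal form of Lemma \ref{le:normalformsn9}, use Lemmas \ref{le:someequalitiesn9}--\ref{le:n9no3} to fix $f_{5,4}=0$ and express $a_3,a_6,a_7,a_2,a_1$ in terms of $x,y,b_2,b_7$, then exhaust the remaining components of \eqref{eq:1} and \eqref{eq:2} until the system collapses---is exactly the route the paper takes. The gap is at the step you yourself flag as decisive: you do not carry out the elimination, and the mechanism you anticipate for the final contradiction is not the one that actually closes the argument. You assert that ``the polynomial equations by themselves will generally be satisfiable, so the contradiction must come from the metric data,'' via a sign analysis on $x+y$, $x-3y$, $y-3x$ and a sum-of-squares identity. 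In fact the opposite happens: the non-degeneracy conditions are needed only along the way, to guarantee that quantities such as $x+y$, $4b_7+1$, $6xy+6y^2-1$, $a_2a_3+a_7^2$ and $12x^2-40xy+12y^2+4$ are non-zero so that one may solve for $z$, $b_2$ and $b_7$ explicitly in terms of $x,y$. The endgame is then a pure elimination: one finds $2x^2-14xy+24y^2=1$, then $(b_5+2b_3)(x-4y)=0$ forces $b_5=-2b_3$ (the branch $x=4y$ dying at once), then a further component of \eqref{eq:2} forces $b_3(x+2y)=0$, and each of the two resulting branches pins $(x,y)$ down to finitely many explicit values, at which one last component of \eqref{eq:2} evaluates to a non-zero rational constant ($-\tfrac{98}{27}$, $-\tfrac{365}{119}$ or $-\tfrac{686}{209}$). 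No definite combination of the kind you predict is what rules out the solutions, and the sign conditions $a_1a_2>0$, $a_1a_3<0$ are not invoked at that stage.

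Since your proposal defers precisely this verification to an anticipated positivity argument that does not materialise in the form you describe, the proof is incomplete as written: the entire content of the theorem beyond Lemma \ref{le:n9no3} is the explicit elimination and the final numerical contradictions, and these must actually be performed (or replaced by a genuine infeasibility certificate). A smaller issue of the same kind: the specific component equations you quote from \eqref{eq:1} (e.g.\ that the $e_1\wedge e_4$ and $e_2\wedge e_3$ components coincide and equal $yb_3+zb_7-\tfrac12 f_{5,1}=0$, and that the $e_1\wedge e_3$ component determines $b_3$) do not match what the computation actually yields---in the paper $b_3$ is constrained only through \eqref{eq:2}, via $b_5=-2b_3$ and $b_3(x+2y)=0$---so the bookkeeping would need to be redone from scratch rather than patched.
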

\begin{proof}
We assume that the parameters fulfill all the conditions that we derived in all the previous lemmas. Then we first get
\begin{equation*}
0=(\omega-f.\nu)(e_2,e_3)=-\tfrac{(4x-6y)b_5-4y b_3-(4b_7+1)z}{4},
\end{equation*}
i.e.
\begin{equation*}
z=\tfrac{(4x-6y)b_5-4y b_3}{4b_7+1}
\end{equation*}
since $4b_7+1\neq 0$ by Lemma \ref{le:n9no3}. Then one computes
\begin{equation*}
0 = \left(f.\omega\wedge \omega-d\hat{\rho}-\nu\wedge \omega\right)(e_1,e_2,e_3,e_6)=-\tfrac{b_2(6 x y+6y^2-1)+4(x+y)}{4(x+y)},\\
\end{equation*}
i.e.\ $b_2(6 x y+6y^2-1)=-4(x+y)$. Since $x+y\neq 0$, also $6xy+6y^2-1\neq 0$, and so
\begin{equation*}
b_2=-\tfrac{4(x+y)}{6xy+6y^2-1}.
\end{equation*}
Moreover,
\begin{equation*}
\begin{split}
  0&=\left(f.\omega\wedge \omega-d\hat{\rho}-\nu\wedge \omega\right)(e_2,e_4,e_5,e_6)
  =\tfrac{x+y}{2}\, \big(b_7(12x^2-40xy+12y^2+4)+1-(x+y)^2\big),\\
0&=\left(f.\omega\wedge \omega-d\hat{\rho}-\nu\wedge \omega\right)(e_1,e_3,e_5,e_6)=-\tfrac{b_2}{2(6xy+6y^2-1)}\, (2x^2 - 14xy + 24y^2 - 1),
\end{split}
\end{equation*}
that is,
\begin{equation*}
2x^2 - 14xy + 24y^2 - 1=0,\qquad b_7(12x^2-40xy+12y^2+4)=(x+y)^2-1,
\end{equation*}
since $b_2\neq 0$, $x+y\neq 0$ by Lemma \ref{le:n9no3}. 

We show now that $12x^2-40xy+12y^2+4\neq 0$:

If this is not the true, then $(x+y)^2=1$, i.e.\ $y=\delta-x$
for some $\delta\in \{-1,1\}$. But then $0=12x^2-40xy+12y^2+4=16\left(2x-\delta\right)^2$, i.e.\ $x=\tfrac{\delta}{2}=y$ and so $2x^2 - 14xy + 24y^2 - 1=2\neq 0$, a contradiction.

Thus, $12x^2-40xy+12y^2+4\neq 0$ and we have
\begin{equation*}
 b_7=\tfrac{(x+y)^2-1}{12x^2-40xy+12y^2+4}.
\end{equation*}
On then computes
\begin{equation*}
0 = \left(f.\omega\wedge \omega-d\hat{\rho}-\nu\wedge \omega\right)(e_2,e_3,e_5,e_6)=\tfrac{(b_5+2 b_3)(x-4y)}{2}.
\end{equation*}
Hence, $b_5=-2b_3$ or $x=4y$. However, $x=4y$ is impossible since then
\begin{equation*}
0 = \left(f.\omega\wedge \omega-d\hat{\rho}-\nu\wedge \omega\right)(e_1,e_3,e_5,e_6)=\tfrac{2}{30y^2-1},
\end{equation*}
a contradiction. Thus $b_5=-2b_3$ and one gets
\begin{equation*}
0 = \left(f.\omega\wedge \omega-d\hat{\rho}-\nu\wedge \omega\right)(e_1,e_2,e_4,e_6)=-\tfrac{2 b_3 (x-y)^2 (x+2y)}{3x^2-10xy+3y^2+1}
\end{equation*}
and
\begin{equation*}
0 \neq \omega^3= \tfrac{12 (x-y)^2}{(6xy+6y^2-1)\cdot (3x^2-10xy+3y^2+1)}e^{123456}.
\end{equation*}
Thus, $x-y\neq 0$ and so $b_3(x+2y)=0$. We show that $b_3\neq 0$ and, consequently, $x=-2y$:

If $b_3=0$, then $b_5=0$ as well and we do get
\begin{equation*}
\begin{split}
0&=(\omega-f.\nu)(e_1,e_3)=\tfrac{12x^2-12y^2-2}{6xy+6y^2-1},\\
0&= \left(f.\omega\wedge \omega-d\hat{\rho}-\nu\wedge \omega\right)(e_1,e_3,e_5,e_6)=\tfrac{-4x^2+28xy-48y^2+2}{6xy+6y^2-1}.
\end{split}
\end{equation*}
One easily checks that all solutions of
\begin{equation*}
-12x^2+12y^2+2=0,\qquad -4x^2+28xy-48y^2+2=0
\end{equation*}
are given by
\begin{equation*}
(x,y)=\delta_1 \tfrac{\sqrt{30}}{15} \left(\tfrac{3}{2},1\right),\qquad (x,y)=\tfrac{\delta_2}{12} (5,-1)
\end{equation*}
for $\delta_1,\delta_2\in \{-1,1\}$. However, in the first case, one computes
\begin{equation*}
0= \left(f.\omega\wedge \omega-d\hat{\rho}-\nu\wedge \omega\right)(e_1,e_2,e_3,e_4)=-\tfrac{98}{27}
\end{equation*}
and in the second case one obtains
\begin{equation*}
0= \left(f.\omega\wedge \omega-d\hat{\rho}-\nu\wedge \omega\right)(e_1,e_2,e_3,e_4)=-\tfrac{365}{119},
\end{equation*}
and so a contradiction in both cases.

Hence, $b_3\neq 0$ and so $x=-2y$. But then
\begin{equation*}
0 = \left(f.\omega\wedge \omega-d\hat{\rho}-\nu\wedge \omega\right)(e_1,e_3,e_5,e_6)=\tfrac{120y^2-2}{6y^2+1},
\end{equation*}
i.e.\ $y=\delta \sqrt{\tfrac{1}{60}}$ for some $\delta\in \{-1,1\}$ and we finally obtain
\begin{equation*}
0 = \left(f.\omega\wedge \omega-d\hat{\rho}-\nu\wedge \omega\right)(e_1,e_2,e_3,e_4)=-\tfrac{686}{209},
\end{equation*}
a contradiction.

Hence, $\mfg$ does not admit a closed $\G_2$-eigenform.
\end{proof}
\subsection{The case $\mfn_{28}$}\label{subsec:n28}
In this subsection, we are considering exact $\G_2$-structures and closed $\G_2$-eigenforms on seven-dimensional almost nilpotent Lie algebras with codimension-one nilpotent ideal isomorphic 
to $\mfn_{28}$. We will determine all such Lie algebras which admit an exact $\G_2$-structure and we will show that no such Lie algebra can admit a closed $\G_2$-eigenform.

First of all, note that $\mfn_{28}$ is a well-known real six-dimensional nilpotent Lie algebra, namely the one underlying the complex three-dimensional Heisenberg Lie algebra, and the Iwasawa manifold, see e.g.\ \cite{KSa}. Moreover, for this subsection, denote by $J_0$ the almost complex structure on $\mfn_{28}$ uniquely defined by $J_0e_{2i-1}=e_{2i}$ for $i=1,2$ and $J_0 e_5=-e_6$.

The Lie algebra of all derivations of $\mfn_{28}$ is given by
\begin{equation}\label{eq:derivationsn28}
\mathrm{Der}(\mfn_{28})=\left\{\left.\begin{pmatrix} A & 0 \\ B & \trC A \end{pmatrix}\right|A\in \bC^{2\times 2},\, B\in \bR^{2\times 4}\right\},
\end{equation}
with respect to the basis $(e_1,\ldots,e_6)$, where we consider $A\in \bC^{2\times 2}$ as a real $4\times 4$-matrix and $\trC A\in \bC$ as a real $2\times 2$-matrix. The Lie group $\mathrm{Inn}(\mfn_{28})$ of inner automorphism of $\mfn_{28}$ is given by
the Lie group generated by the exponentials of elements in $\mathrm{Der}(\mfn_{28})$, and so equals
\begin{equation}\label{eq:innerautosn28}
\mathrm{Inn}(\mfn_{28})=\left\{\left.\begin{pmatrix} C & 0 \\ D & \det_{\bC}(C) \end{pmatrix}\right|C\in \GL(2,\bC),\, D\in \bR^{2\times 4}\right\}.
\end{equation}
Now split $\mfn_{28}=V\oplus W$ with $V:=\spa{e_1,e_2,e_3,e_4}$ and $W:=\spa{e_5,e_6}$ and do the same for the dual space $\mfn_{28}^*=V^*\oplus W^*$.
Let an $\SU(3)$-structure $(\omega,\rho)\in \Lambda^2 \mfn_{28}^*\times \Lambda^3\mfn_{28}^*$ with exact $\rho$ be given, i.e.\ there exists
$\nu \in \Lambda^2\mfn_{28}^*$ with $d\nu=\rho$. Write
\begin{equation*}
\omega=a e^{56}+e^5\wedge \alpha_1+e^6\wedge \alpha_2+\tilde{\omega}
\end{equation*}
for $a\in \bR$, $\alpha_1,\alpha_2\in V^*$ and $\tilde{\omega}\in \Lambda^2 V^*$ and, similarly,
\begin{equation*}
\nu=b e^{56}+e^5\wedge \beta_1+e^6\wedge \beta_2+\tilde{\nu}
\end{equation*}
for $b\in \bR$, $\beta_1,\beta_2\in V^*$ and $\tilde{\nu}\in \Lambda^2 V^*$. Moreover, let $\sigma_2:=de^5$ and $\sigma_3:=de^6$, note that $\sigma_2,\sigma_3\in [[\Lambda^{2,0} V^*]]$ with respect to the almost complex structure $J_0$ on $V$. Set
\begin{equation*}
\rho_0:=e^6\wedge \sigma_2-e^5\wedge \sigma_3.
\end{equation*}
Then $\rho_0$ induces the complex structure $J_0$ (if one chooses the right orientation on $\mfn_{28}$) and we have
\begin{equation*}
\rho=d\nu=b\rho_0+\sigma_2\wedge \beta_1+\sigma_3\wedge \beta_2.
\end{equation*}
This shows that $b\neq 0$ as otherwise $\rho(e_5,\cdot,\cdot)=0$, contradicting Lemma \ref{le:SL3Clinearlydependent} (a). Moreover,
$\rho(e_5,e_6,\cdot)=0$, i.e.\ $e_5$ and $e_6$ are $J:=J_{\rho}$-linearly dependent by Lemma \ref{le:SL3Clinearlydependent} (a).
But so
\begin{equation*}
0\neq g(e_5,e_5)=\omega(Je_5,e_5)=a e^{56}(Je_5,e_5),
\end{equation*}
implies $a\neq 0$ and we may apply an inner automorphism $F$ as in \eqref{eq:innerautosn28} with $C=I_2$ and suitable $D\in \bR^{2\times 4}$
to get rid of $\alpha_1$ and $\alpha_2$ in $\omega$, i.e.\ we may assume that $\omega=a e^{56}+\tilde{\omega}$ with $\tilde{\omega}\neq 0$ due to the non-degeneracy of $\omega$. Now we must have
\begin{equation*}
0=\omega\wedge \rho=ae^{56}\wedge (\sigma_2\wedge \beta_1+\sigma_3\wedge \beta_2)+be^6\wedge\sigma_2\wedge \tilde{\omega}-b e^5\wedge\sigma_3\wedge\tilde{\omega}
\end{equation*}
Thus, $\sigma_2\wedge \beta_1+\sigma_3\wedge \beta_2=0$, and so $\rho=b\rho_0$, and $\sigma_i\wedge \tilde{\omega}=0$ for $i=2,3$.
Now $\sigma_1$, $\sigma_2$ span $[[\Lambda^{2,0} V^*]]$ and so the latter identity shows
$\tilde{\omega}\in [\Lambda^{1,1} V^*]$. A straightforward computation shows that $\sigma_3\wedge \beta_2=-\sigma_2\wedge \beta_1$ implies $\beta_2=J_0^*\beta_1$.

Next, $(\sigma_1,J_0|_V)$, $\sigma_1:=e^{12}+e^{34}$, defines an almost Hermitian structure on $V$ and
$\mathrm{SU}(2)$ preserves $\sigma_1$ and acts as $\mathrm{SO}(3)$ on $[\Lambda^{1,1}_0 V^*]$.
Since matrices of the block-diagonal form $\diag(A,I_2)$ with $A\in \mathrm{SU}(2)$ are in $\mathrm{Inn}(\mfn_{28})$ and preserve $\rho_0$, we may thus assume that $\tilde{\omega}=a_1 e^{12}+a_2 e^{34}$. Moreover, an element of the form $\diag\left(b_1,b_1,b_2,b_2,b_1 b_2,b_1 b_2\right)$ is in
$\mathrm{Inn}(\mfn_{28})$ and only scales $\rho_0$, and so we may even assume that $|a_1|=|a_2|=|a|$, i.e. there are $\epsilon_1,\epsilon_2\in \{-1,1\}$ such that $\tilde{\omega}=\epsilon_1 (a\epsilon_2 e^{12}+e^{34})$. Since $J=\pm J_0$, we do get
\begin{equation*}
\epsilon_2 a^2= \omega(e_1,e_2)\omega(e_3,e_4)=\omega(Je_2,e_2)\omega(Je_4,e_4)=g(e_2,e_2)g(e_4,e_4)>0,
\end{equation*}
and so $\epsilon_2=1$, i.e. $\omega=a\epsilon_1( e^{12}+e^{34})+ ae^{56}$. Moreover,
\begin{equation*}
\epsilon_1 a^2=\omega(e_1,e_2)\omega(e_5,e_6)=-\omega(Je_2,e_2)\omega(Je_6,e_6)=-g(e_2,e_2)g(e_6,e_6)<0,
\end{equation*}
i.e. $\epsilon_1=-1$ and, consequently, $\omega=a (-e^{12}+e^{34})+ e^{56})=:a\omega_0$. Noting that for $a>0$, the ordered basis $(e_1,-e_2,e_3,-e_4,e_5,-e_6)$ is oriented and so $J=J_0$, and otherwise $J=-J_0$,
the normalisation condition reads $|a|^3= b^2$. Hence, we may write $a=\epsilon\lambda^2$ and $b=\lambda^3$ for some $\lambda\in \bR\setminus \{0\}$ and some $\epsilon\in \{-1,1\}$. 

Finally, we may use block-diagonal matrices $\diag(A,I_2)$ with $A\in \mathrm{SU}(2)$ to bring $\tilde{\nu}$ into a canonical. For this, note that $\diag(A,I_2)$ is in $\mathrm{Inn}(\mfn_{28})$ and preserves $(\omega,\rho)$ and so we may assume that
$\tilde{\nu}=c_1 e^{12}+c_2 e^{34} + c_3 \sigma_2+c_4 \sigma_3$ for certain $c_1,c_2,c_3,c_4\in \bR$.
Thus,
\begin{equation*}
\nu=\lambda^3 e^{56}+e^5\wedge \beta+e^6\wedge J^*\beta +c_1 e^{12}+c_2 e^{34} + c_3 \sigma_2+c_4 \sigma_3
\end{equation*}
for $\beta:=\beta_1\in V^*$. If $\nu\in [\Lambda^{1,1}_0 \mfn_{28}^*]$, then
$c_3=c_4=0$ and $c_2=\lambda^3-c_1$. Summarizing, we have arrived at
\begin{lemma}\label{le:canonicalformsonn28}
	Let $(\omega,\rho)\in \Lambda^2 \mfn_{28}^*\times \Lambda^3 \mfn_{28}^*$ be an $\SU(3)$-structure for which there exists $\nu\in \Lambda^2 \mfn_{28}^*$
	with $\rho=d\nu$. Then $(\omega,\rho,\nu)$, are, up to automorphism, given by
	\begin{equation}\label{eq:canonicalformsonn28one}
		\left\{\begin{split}
			\omega&=\epsilon\lambda^2\omega_0=\epsilon\lambda^2\left(-e^{12}-e^{34}+e^{56}\right),\\
			\rho&=\lambda^3 \rho_0=\lambda^3\left(e^{136}-e^{246}-e^{145}-e^{235}\right),\\
			\nu&=\lambda^3 e^{56}+e^5\wedge \beta+e^6\wedge J^*\beta+c_1 e^{12}+c_2 e^{34} + c_3 \sigma_2+c_4 \sigma_3,
		\end{split}\right.
	\end{equation}
	for certain $c_1,c_2,c_3,c_4\in \bR$,$\lambda\in \bR\setminus \{0\}$, $\epsilon\in \{-1,1\}$ and $\beta\in V^*$.
	
	If $\nu\in [\Lambda^{1,1}_0 \mfn_{28}^*]$,
	then, up to an automorphism, $(\omega,\rho)$ take the form as in \eqref{eq:canonicalformsonn28one} and
	\begin{equation}\label{eq:canonicalformsonn28two}
		\nu=\lambda^3  e^{56}+e^5\wedge \beta+e^6\wedge J_0^*\beta+c e^{12}+(\lambda^3-c) e^{34}
	\end{equation}
	for some $c\in \bR$ and $\beta\in \mfn_{28}^*$.
\end{lemma}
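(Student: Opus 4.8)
The plan is to exploit the explicit description of $\mathrm{Inn}(\mfn_{28})$ in \eqref{eq:innerautosn28} to normalise $(\omega,\rho,\nu)$ in stages, reading off at each step the constraints forced by the $\SU(3)$-compatibility relations together with the exactness $\rho=d\nu$. Throughout I would work with the splitting $\mfn_{28}=V\oplus W$, $V=\spa{e_1,e_2,e_3,e_4}$, $W=\spa{e_5,e_6}$, setting $\sigma_2:=de^5$, $\sigma_3:=de^6$, both of which lie in $[[\Lambda^{2,0}V^*]]$ for $J_0$; correspondingly I expand $\omega$ and $\nu$ into their $e^{56}$-part, their mixed $W^*\wedge V^*$-part, and their pure $\Lambda^2 V^*$-part.

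First I would differentiate $\nu$ to obtain $\rho=b\rho_0+\sigma_2\wedge\beta_1+\sigma_3\wedge\beta_2$, and invoke Lemma \ref{le:SL3Clinearlydependent}(a): since $\rho(e_5,\cdot,\cdot)\neq 0$ we must have $b\neq 0$, and since $\rho(e_5,e_6,\cdot)=0$ the vectors $e_5,e_6$ are $J$-linearly dependent, which through $g=\omega(J\cdot,\cdot)$ forces the $e^{56}$-coefficient $a$ of $\omega$ to be nonzero. The nonvanishing of $a$ is exactly what lets me apply an inner automorphism with $C=I_2$ and a suitable shear $D$ to absorb the mixed terms $e^5\wedge\alpha_1+e^6\wedge\alpha_2$ of $\omega$, reducing to $\omega=ae^{56}+\tilde\omega$ with $\tilde\omega\in\Lambda^2 V^*$.

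Next I would impose the compatibility $\omega\wedge\rho=0$. Separating its $e^{56}$-part from the rest yields simultaneously $\sigma_2\wedge\beta_1+\sigma_3\wedge\beta_2=0$, so that $\rho=b\rho_0$, and $\sigma_i\wedge\tilde\omega=0$; since $\sigma_2,\sigma_3$ span $[[\Lambda^{2,0}V^*]]$, the latter says $\tilde\omega$ is of type $(1,1)$, while the former pins down $\beta_2=J_0^*\beta_1$. The subgroup $\diag(A,I_2)$, $A\in\SU(2)$, inside $\mathrm{Inn}(\mfn_{28})$ fixes $\rho_0$ and acts as $\SO(3)$ on $[\Lambda^{1,1}_0 V^*]$, so I can rotate $\tilde\omega$ into diagonal form $a_1e^{12}+a_2e^{34}$; a further scaling automorphism $\diag(b_1,b_1,b_2,b_2,b_1b_2,b_1b_2)$, which merely rescales $\rho_0$, normalises $|a_1|=|a_2|=|a|$. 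The signs are then fixed not by choice but by positivity of the metric: evaluating $g(e_i,e_i)=\omega(Je_i,e_i)>0$ on the relevant pairs forces $\omega=a\omega_0$, and the length-normalisation $|a|^3=b^2$ lets me write $a=\epsilon\lambda^2$, $b=\lambda^3$.

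Finally, the residual $\SU(2)$-freedom preserving $(\omega,\rho)$ is used to bring $\tilde\nu$ into the stated shape, giving \eqref{eq:canonicalformsonn28one}; imposing additionally that $\nu$ be a primitive $(1,1)$-form kills the $\sigma_2,\sigma_3$ components and ties $c_2=\lambda^3-c_1$, yielding \eqref{eq:canonicalformsonn28two}. The main obstacle I anticipate is bookkeeping rather than conceptual: correctly tracking how the induced orientation determines whether $J=\pm J_0$, and ensuring that every normalising automorphism preserves whatever has already been normalised — in particular that the $\SU(2)$-rotations and the diagonal scalings fix $\rho_0$ up to scale — so that the successive reductions compose consistently. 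Getting the sign bookkeeping in the metric-positivity step right, where $\epsilon_2=1$ but $\epsilon_1=-1$, is the delicate point.
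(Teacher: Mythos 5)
Your proposal is correct and follows essentially the same route as the paper's own argument: the same $V\oplus W$ splitting, the same use of Lemma \ref{le:SL3Clinearlydependent}(a) to force $b\neq 0$ and $a\neq 0$, the same shear and $\SU(2)$-rotation/scaling automorphisms from \eqref{eq:innerautosn28}, the compatibility $\omega\wedge\rho=0$ to diagonalise, and the metric-positivity argument to fix the signs. The points you flag as delicate (orientation determining $J=\pm J_0$, and the $\epsilon_1=-1$, $\epsilon_2=1$ sign step) are exactly where the paper spends its care as well.
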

Next, we determine those seven-dimensional almost nilpotent Lie algebras $\mfg$ with codimension-one nilpotent ideal $\mfn_{28}$ which admit an exact $\G_2$-structure. First of all, we get some restriction on $f$ if $\mfg$ admits an exact $\G_2$-structure, i.e.\ if \eqref{eq:3} is valid:
\begin{lemma}\label{le:implicationsofeq3}
Let $(\omega,\rho)\in \Lambda^2 \mfn_{28}^*\times \Lambda^3 \mfn_{28}^*$ be as in \eqref{eq:canonicalformsonn28one}
and assume that $\nu\in \Lambda^2 \mfn_{28}^*$ satisfies $d\nu=\rho$. Then $\alpha\in \mfn_{28}^*$ and $f\in \mathrm{Der}(\mfn_{28})$ fulfill \eqref{eq:3}
if and only if
\begin{equation*}
f.\nu^{1,1}=\omega,\qquad f.\nu^{2,0}=d\alpha,\qquad [f,J_0]=0,
\end{equation*}
where $\nu^{1,1}$ is the $(1,1)$-part and $\nu^{2,0}$ is the $(2,0)+(0,2)$-part of $\nu$. If this is the case, then no eigenvalue of $f$ is purely imaginary.
\end{lemma}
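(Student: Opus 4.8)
The plan is to extract everything from the type decomposition with respect to $J_0$ together with the single fact that $f$ is a derivation. First I would record three preliminary observations. Since $(\omega,\rho)$ is an $\SU(3)$-structure with induced almost complex structure $J=J_\rho=\epsilon J_0$, the fundamental form $\omega$ is of type $(1,1)$ and the induced metric $g=\omega(J\cdot,\cdot)$ is definite; because $J=\pm J_0$, the involution $\eta\mapsto\eta(J\cdot,J\cdot)$ coincides with $\eta\mapsto\eta(J_0\cdot,J_0\cdot)$, so the splitting into $(1,1)$- and $(2,0)+(0,2)$-parts is the same for $J$ and for $J_0$, and I write $\nu=\nu^{1,1}+\nu^{2,0}$ accordingly. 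Secondly, for every $\alpha\in\mfn_{28}^*$ one has $d\alpha\in\langle\sigma_2,\sigma_3\rangle$, which is the $(2,0)+(0,2)$-part, since $de^i=0$ for $i\le4$ while $de^5=\sigma_2$ and $de^6=\sigma_3$. Thirdly, a one-line computation gives $d\omega_0=d e^{56}=\sigma_2\wedge e^6-e^5\wedge\sigma_3=\rho_0$, hence $d\omega=\epsilon\lambda^2\rho_0$.

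The backward implication is then immediate: if $[f,J_0]=0$, the operation $f.$ preserves both type components, so $f.\nu^{1,1}$ is $(1,1)$ and $f.\nu^{2,0}$ is $(2,0)+(0,2)$; assuming $f.\nu^{1,1}=\omega$ and $f.\nu^{2,0}=d\alpha$ and adding the two gives $f.\nu-d\alpha=\omega$, that is \eqref{eq:3}.

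For the forward implication the key step, which I expect to be the main obstacle, is to prove $[f,J_0]=0$ \emph{before} attempting any splitting. Here I would exploit that $f$ is a derivation, so $f.(d\mu)=d(f.\mu)$ for every form $\mu$. Applying this to $d\nu=\rho$ and using \eqref{eq:3} yields $f.\rho=d(f.\nu)=d(\omega+d\alpha)=d\omega=\epsilon\lambda^2\rho_0$, while on the other hand $f.\rho=\lambda^3 f.\rho_0$; hence $f.\rho_0=\tfrac{\epsilon}{\lambda}\rho_0$, a nonzero real multiple of $\rho_0$. Now I invoke the equivariance of Hitchin's construction $J_{g\cdot\rho}=gJ_\rho g^{-1}$ and its scaling invariance $J_{t\rho_0}=J_0$ for all $t\neq0$ (immediate from the homogeneity of $K_\rho$ and $\lambda(\rho)$). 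Running the flow $g_t=\exp(tf)$, the curve $u(t)=g_t\cdot\rho_0$ satisfies $u'(t)=g_t\cdot(f.\rho_0)=\tfrac{\epsilon}{\lambda}u(t)$ because $f.\rho_0$ is a scalar multiple of $\rho_0$, so $u(t)=e^{\epsilon t/\lambda}\rho_0$. Therefore $g_t J_0 g_t^{-1}=J_{g_t\cdot\rho_0}=J_{e^{\epsilon t/\lambda}\rho_0}=J_0$ for all $t$, and differentiating at $t=0$ gives $[f,J_0]=0$. With this in hand, $f.$ preserves types, and decomposing \eqref{eq:3} into its $(1,1)$- and $(2,0)+(0,2)$-parts (using that $\omega$ is $(1,1)$ and $d\alpha$ is $(2,0)+(0,2)$) returns exactly $f.\nu^{1,1}=\omega$ and $f.\nu^{2,0}=d\alpha$.

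Finally, for the eigenvalue statement: since $[f,J_0]=0$, $f$ is complex linear and induces $f_{\bC}\in\End_{\bC}(\mfn_{28},J_0)\cong\bC^{3\times3}$. Under the standard dictionary identifying real $(1,1)$-forms on $(\mfn_{28},J_0)$ with Hermitian forms, the equation $f.\nu^{1,1}=\omega$ becomes the Lyapunov equation $f_{\bC}^{\ast}H_\nu+H_\nu f_{\bC}=\mp H_\omega$, where $H_\nu$ is Hermitian and $H_\omega$, being the Hermitian form of the definite metric $g$, is definite. If $f_{\bC}v=i\theta v$ for some $v\neq0$ and $\theta\in\bR$, then $v^{\ast}(f_{\bC}^{\ast}H_\nu+H_\nu f_{\bC})v=(\overline{i\theta}+i\theta)\,v^{\ast}H_\nu v=0$, contradicting definiteness of the right-hand side ($v^{\ast}H_\omega v\neq0$). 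Hence $f_{\bC}$, and therefore $f$, has no eigenvalue on the imaginary axis, in particular none purely imaginary. Once $f.\rho_0\in\bR\rho_0$ is recognised and turned into $[f,J_0]=0$ via Hitchin's equivariance, everything else is bookkeeping and a standard inertia argument; that recognition is the crux of the proof.
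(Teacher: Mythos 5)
Your proof is correct, but the way you establish the crucial fact $[f,J_0]=0$ is genuinely different from the paper's. The paper argues structurally: it splits $f=f_1+f_2$ into its $J_0$-invariant and $J_0$-anti-invariant parts, observes from the explicit form \eqref{eq:derivationsn28} of $\mathrm{Der}(\mfn_{28})$ that $f_2$ is strictly lower block-triangular for the splitting $\mfn_{28}=V\oplus W$, and then notes that $f_2.\nu^{1,1}$ would contribute a nonzero $W^*\wedge V^*$-component to the $(2,0)+(0,2)$-part of \eqref{eq:3} (because $\nu^{1,1}$ contains the term $\lambda^3 e^{56}$), whereas $f_1.\nu^{2,0}$ and $d\alpha$ lie in $\Lambda^2 V^*$; this forces $f_2=0$. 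You instead apply the derivation identity $f.(d\mu)=d(f.\mu)$ to $d\nu=\rho$ together with \eqref{eq:3} to get $f.\rho=d\omega=\epsilon\lambda^2\rho_0$, hence $f.\rho_0\in\bR\rho_0$, and then deduce $[f,J_0]=0$ from the $\GL^+$-equivariance and scale-invariance of Hitchin's almost complex structure via the flow $\exp(tf)$. Both routes are sound; yours is more conceptual, uses less of the explicit description of $\mathrm{Der}(\mfn_{28})$ and of the canonical form of $\nu$ (only $d\omega\in\bR^*\rho$ and the derivation property are needed, so it would transfer to other ideals with a coupled normal form), and yields the extra identity $f.\rho_0=\epsilon\lambda^{-1}\rho_0$ as a by-product; the paper's is more elementary in that it never invokes naturality of the Hitchin construction. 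Your Lyapunov/inertia argument for the eigenvalue statement is the paper's computation $0\neq g(X,X)=(f.\nu^{1,1})(J_0X,X)=0$ written in Hermitian-matrix form, so that part is essentially identical.
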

\begin{proof}
We decompose $f=f_1+f_2$ into its $J_0$-invariant part $f_1$ and its $J_0$-anti-invariant part $f_2$. Then $f_1$ preserves the splitting
$\Lambda^2 \mfn_{28}^*=[\Lambda^{1,1} \mfn_{28}^*]\oplus [[\Lambda^{2,0} \mfn_{28}^*]]$ while $f_2$ interchanges the two summands.
As $d\alpha$ is of type $(2,0)+(0,2)$, \eqref{eq:3} is equivalent to
\begin{equation*}
f_1.\nu^{1,1}+f_2.\nu^{2,0}=\omega,\qquad f_1.\nu^{2,0}+f_2.\nu^{1,1}=d\alpha.
\end{equation*}
if we decompose $\nu=\nu^{1,1}+\nu^{2,0}$ as in the statement. Note that $f_2$ is a strictly lower triangular block matrix with respect to the splitting $\mfn_{28}=V\oplus W$, while $f_1$ is a lower triangular block matrix with respect to the same splitting. Moreover, $\nu^{2,0},\ d\alpha\in \Lambda^2 V^*$
and $\nu^{1,1}$ has a non-trivial $\Lambda^2 W^*$-part. Thus, $f_1.\nu^{2,0}\in \Lambda^2 V^*$
and $f_2.\nu^{1,1}\in W^*\wedge V^*\oplus \Lambda^2 V^*$ with non-trivial $W^*\wedge V^*$-part if $f_2\neq 0$.
Hence, $f_2=0$, i.e.\ $f=f_1$, and so $[f,J_0]=0$, and the above equations simplify to
\begin{equation*}
f.\nu^{1,1}=\omega,\qquad f.\nu^{2,0}=d\alpha
\end{equation*}
as stated.

Finally, assume that there is an eigenvector $X\in \mfn_{28}\setminus \{0\}$ of $f$ with purely imaginary eigenvalue $ic$, $c\in \bR$. Then
$f(X)=c J_0 X$, and so $f(J_0 X)=J_0 f(X)=-c X$, and we get
\begin{equation*}
\begin{split}
0\neq & g(X,X)=\omega(J_0 X,X)=f.\nu^{1,1}(J_0 X,X)=-\nu^{1,1}(f(J_0 X),X)-\nu^{1,1}(J_0 X,f(X))\\
&=c\nu^{1,1}(X,X)-c \nu^{1,1}(J_0X,J_0 X)=0
\end{split}
\end{equation*}
a contradiction. Hence, no eigenvalue of $f$ can be purely imaginary.
\end{proof}
We are now in the position to give a classification of those almost nilpotent Lie algebras with codimen\-sion-one nilpotent ideal isomorphic to $\mfn_{28}$ which admit an exact $\G_2$-structure:
\begin{theorem}\label{th:exactG2n28}
Let $\mfg$ be a seven-dimensional almost nilpotent Lie algebras $\mfg$ with codimension-one nilpotent ideal
$\mfn_{28}$, i.e.\ $\mfg\cong\mfn_{28}\rtimes_{f} \bR$ for some $f\in \mathrm{Der}(\mfn_{28})$. Then $\mfg$ admits
an exact $\G_2$-structure if and only if $f$ has no purely imaginary eigenvalues.
Equivalently, $\mfg$ admits an exact $\G_2$-structure if and only if $\mfg\cong \mfn_{28}\rtimes_{f_{a,b_1,b_2}}\bR$ for certain $a\in \left[-\tfrac{1}{4},\infty\right)\setminus \{0\}$, $b_1,b_2\in \bR$ or
$\mfg \cong \mfn_{28}\rtimes_{h_b}\bR$ for some $b\in \bR$, where
\begin{equation*}
\begin{split}
f_{a,b_1,b_2}&:=\left(\begin{smallmatrix}  a+ib_1 & & \\ & -\tfrac{1}{2}-a+ib_2 & \\ & & -\tfrac{1}{2}+i(b_1+b_2) \end{smallmatrix}\right),\quad
h_b:=\left(\begin{smallmatrix} -\tfrac{1}{4}+i b & 1 &  \\  & -\tfrac{1}{4}+i b &  \\ &  & -\tfrac{1}{2} + 2 ib \end{smallmatrix}\right).
\end{split}
\end{equation*}
\end{theorem}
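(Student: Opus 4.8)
The plan is to prove the two asserted equivalences in turn: first that an exact $\G_2$-structure exists iff $f$ has no purely imaginary eigenvalue, and then that this condition is equivalent to $\mfg$ being isomorphic to one of the listed models. The first equivalence rests on the reduction of Section~\ref{sec:reductionto6d} together with Lemmas~\ref{le:canonicalformsonn28} and~\ref{le:implicationsofeq3}, while the second is a conjugacy classification of derivations of $\mfn_{28}$. Throughout I use that ``admitting an exact $\G_2$-structure'' is an isomorphism invariant of $\mfg$.

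For the forward implication, suppose $\mfg\cong\mfn_{28}\rtimes_f\bR$ carries an exact $\G_2$-structure. By Theorem~\ref{th:exactG2structure} there are $(\omega,\rho,\nu,\alpha)$ on $\mfh=\mfn_{28}$ with $\rho=d\nu$ and $f'.\nu-d\alpha=\omega$, where $f'=\ad(e_7)|_{\mfh}$ for the chosen $e_7$. Writing $e_7=te_7^{(0)}+X$ relative to the defining splitting gives $f'=tf+\ad_X$ with $t\neq0$; since $\ad_X$ lies in the strictly block-lower-triangular part of \eqref{eq:derivationsn28}, $f'$ and $tf$ share the same diagonal blocks, so the eigenvalues of $f'$ are exactly $t$ times those of $f$. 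Transporting $(\omega,\rho,\nu,\alpha)$ by an automorphism $\phi$ as in Lemma~\ref{le:canonicalformsonn28} normalises them to \eqref{eq:canonicalformsonn28one} and replaces $f'$ by the conjugate $\phi f'\phi^{-1}$, still satisfying \eqref{eq:3} and with the same eigenvalues. Lemma~\ref{le:implicationsofeq3} then shows this conjugate, hence $f$ itself, has no purely imaginary eigenvalue; the same chain of reasoning shows the condition is well defined on the isomorphism class of $\mfg$.

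For the converse I would first reduce $f$ to normal form. Conjugating by $\left(\begin{smallmatrix} I & 0\\ D & 1\end{smallmatrix}\right)\in\mathrm{Inn}(\mfn_{28})$ from \eqref{eq:innerautosn28} changes the off-diagonal block by $B\mapsto B+DA-(\trC A)D$, and the Sylvester equation $DA-(\trC A)D=-B$ is solvable for every $B$ precisely when $A$ and $\trC A=\mu_1+\mu_2$ have no common eigenvalue; every such coincidence forces one of $\mu_1,\mu_2$ to be purely imaginary, which is excluded, so $B$ can be annihilated. A further $C\in\GL(2,\bC)$ puts $A$ into complex Jordan form, and rescaling $f$ (reparametrising the $\bR$-factor) fixes $\re(\mu_1+\mu_2)=-\tfrac12$, legitimate since that real part is nonzero. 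In the diagonalisable case the swap $\mu_1\leftrightarrow\mu_2$ reduces the free real part to $a\in[-\tfrac14,\infty)$, with $a\neq0$ and $a\neq-\tfrac12$ exactly the conditions excluding purely imaginary $V$-eigenvalues, yielding $f_{a,b_1,b_2}$; in the non-diagonalisable case a diagonal $C$ scales the Jordan entry to $1$, yielding $h_b$ with $\re\mu=-\tfrac14$. It then remains to exhibit an exact $\G_2$-structure on these models: both commute with $J_0$ and preserve $\mfn_{28}=V\oplus W$, so by Lemma~\ref{le:implicationsofeq3} it suffices to solve $f.\nu^{1,1}=\omega$ with $\nu^{2,0}=0$ (so $\alpha=0$). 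Taking $\omega=\epsilon\lambda^2\omega_0$, $\rho=\lambda^3\rho_0$ and the ansatz $\nu=\lambda^3 e^{56}+\zeta$ with $\zeta\in[\Lambda^{1,1}V^*]$ closed makes $d\nu=\rho$ automatic; the $W$-component gives $f.(e^{56})=-2\,\re(\mu_1+\mu_2)\,e^{56}=e^{56}$, forcing $\lambda=\epsilon$, and the $V$-component requires $f.\zeta=\epsilon\,\omega_0|_V$. The operator $f.$ on $[\Lambda^{1,1}V^*]$ has eigenvalues $-(\lambda_i+\bar\lambda_j)$ built from the eigenvalues of $f|_V$, all of nonzero real part because no eigenvalue of $f$ is purely imaginary, so $f.$ is invertible there and $\zeta$ exists. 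Theorem~\ref{th:exactG2structure} then produces $\varphi=\omega\wedge e^7+\rho$.

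I expect the main obstacle to lie in the normal-form classification rather than in the existence construction. The delicate points are verifying that the no-purely-imaginary hypothesis is \emph{exactly} what kills the block $B$ (through the common-eigenvalue criterion for the Sylvester equation) and that the residual freedom from $\GL(2,\bC)$-conjugation, rescaling, and the $\mu_1\leftrightarrow\mu_2$ swap pins down the precise range $a\in[-\tfrac14,\infty)\setminus\{0\}$ and the single family $h_b$ without redundancy or overlap between the two families.
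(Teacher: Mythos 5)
Your proof is correct and follows the same overall strategy as the paper: the forward implication via Lemma \ref{le:implicationsofeq3}, the reduction of $f$ to block-diagonal complex Jordan form by inner automorphisms from \eqref{eq:innerautosn28}, and then an existence argument on the resulting normal forms, with the parametrisation by $f_{a,b_1,b_2}$ and $h_b$ obtained by rescaling and ordering real parts. Two differences are worth recording. In the existence step the paper simply exhibits explicit two-forms $\nu$ (one for the diagonalisable case, one for the Jordan-block case, with $\lambda$ chosen in terms of $\re(w_1+w_2)$ resp.\ $\re(w)$) and verifies $d\nu=\rho$ and $f.\nu=\omega$ by direct computation; you instead argue abstractly that the operator $\zeta\mapsto f.\zeta$ on $[\Lambda^{1,1}V^*]$ has (generalised) eigenvalues $-(\mu_i+\bar\mu_j)$, all of nonzero real part precisely because $\re(\mu_1)$, $\re(\mu_2)$ and $\re(\mu_1+\mu_2)$ are nonzero, hence is invertible. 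This is cleaner and makes transparent why the no-purely-imaginary-eigenvalue hypothesis is exactly what the construction needs. You are also more careful on two points the paper passes over quickly: the relation between the structural derivation $f$ and $\ad(e_7)|_{\mfn_{28}}$ for the metric unit normal $e_7$ (they differ by a nonzero real scalar and a strictly block-lower-triangular $\ad_X$, so the purely-imaginary-eigenvalue condition is a genuine isomorphism invariant), and the solvability condition for killing the block $B$, which is really the Sylvester equation $DA-(\trC A)D=-B$ over $\bR$; its obstruction involves the full eigenvalue list $\mu_1,\bar\mu_1,\mu_2,\bar\mu_2$ of the realified $A$, and the additional coincidences $\mu_1+\mu_2=\bar\mu_i$ again force a purely imaginary eigenvalue, as you note. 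Your closing worry about possible overlap between the two families is not needed: the statement only asserts that $\mfg$ is isomorphic to one of the listed models, not that the list is irredundant.
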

\begin{proof}
We note that the forward implication in the first statement is incorporated in Lemma \ref{le:implicationsofeq3}.

So let $\mfg\cong\mfn_{28}\rtimes_{f} \bR$ for some $f\in \mathrm{Der}(\mfn_{28})$ which has no purely imaginary eigenvalues. By
\eqref{eq:derivationsn28}, we know that
\begin{equation*}
f=\begin{pmatrix} A & 0 \\ B & \trC A \end{pmatrix}
\end{equation*}
for some $A\in \bC^{2\times 2}$ and $B\in \bR^{2\times 4}$. If we conjugate $f$ with an automorphism $F$ of $\mfn_{28}$ as in \eqref{eq:innerautosn28} with $C=I_2$, we surely get a Lie algebra $\mfn_{28}\rtimes_{F f F^{-1}} \bR$ which is isomorphic to $\mfg$, where
\begin{equation*}
F f F^{-1}=\begin{pmatrix} A & 0 \\ B+D(A-(\trC A)I_2) & \trC A \end{pmatrix}.
\end{equation*}
Thus, $FfF^{-1}$ is block-diagonal if $A-(\trC A)I_2$ is invertible, i.e.\ if $\trC A$ is not a complex eigenvalue of the complex matrix $A$. However, if $\trC A$ would be an eigenvalue of the complex matrix $A$, then the other complex eigenvalue would have to be zero and so the real matrix $f$ would have one eigenvalue equal to zero, which is excluded since $f$ has no purely imaginary eigenvalues.

Thus, calling $FfF^{-1}$ again $f$, we may assume that $f=\diag(A,\trC A)$. But then we may use an automorphism of $\mfn_{28}$ as in \eqref{eq:innerautosn28} to bring $A$ into complex Jordan normal form.
Hence, we may assume that either $A=\diag(w_1,w_2)$
for $w_1,w_2\in \bC$ with $\re(w_1)\neq 0$, $\re(w_2)\neq 0$, $\re(w_1+w_2)=\trC A\neq 0$ or
\begin{equation*}
A=\begin{pmatrix} w & 1 \\ 0 & w \end{pmatrix}
\end{equation*}
for some $w\in \bC$ with $\re(w)\neq 0$. We provide now in both cases an example of an $\SU(3)$-structure $(\omega,\rho)\in \Lambda^2 \mfn_{28}^*\times \Lambda^3 \mfn_{28}^*$
a two-form $\nu\in \Lambda^2\mfn_{28}^*$ and a one-form $\alpha\in \mfn_{28}^*$ such that
$\rho=d\nu$ and such that \eqref{eq:3} is fullfilled, where the latter equation is valid by Lemma \ref{le:implicationsofeq3} if and only if
$f.\nu^{1,1}=\omega$, $f.\nu^{2,0}=d\alpha$. We will always choose $\alpha=0$ and a $(1,1)$-form $\nu$, so that the second equation is automatically fulfilled and we only have to deal with the first one.

In the first case, one checks by a straightforward computation that
\begin{equation*}
\omega=\lambda^2 \omega_0,\quad \rho=\lambda^3 \rho_0,\quad
\nu=\frac{\lambda^2}{2\, \re(w_1)} e^{12}+\frac{\lambda^2}{2\, \re(w_2)} e^{34}+\lambda^3 e^{56}
\end{equation*}
with $\lambda:=-\frac{1}{2\,\re(w_1+w_2)}$ fulfills all necessary equations, whereas in the second case
\begin{equation*}
\omega=\lambda^2 \omega_0,\quad \rho=\lambda^3 \rho_0,\quad
\nu=  -2 \lambda^3 e^{12}-(16 \lambda^5+2\lambda^3) e^{34}-4\lambda^4\cdot (e^{14}-e^{23})+\lambda^3 e^{56}
\end{equation*}
with $\lambda:=-\frac{1}{4 \re(w)}$ does the job.

The second statement in the assertion follows immediately from the considerations above by noting that rescaling $f$ by a non-zero scalar gives an isomorphic Lie algebra and by noting that we may order the real parts of the eigenvalues of $A$ in such a way that the first one is greater or equal to the second one.
\end{proof}
\begin{remark}
Note that $f_{a,b_1,b_2}$ and $h_b$ in Theorem \ref{th:exactG2n28} both fix $e^{56}$ and so one easily sees that
$(\omega,\rho)$ as in Lemma \ref{le:canonicalformsonn28} with $\lambda=1$, i.e.\ $\omega=-e^{12}-e^{34}+e^{56}$, $\rho=e^{136}-e^{246}-e^{145}-e^{235}$, give rise to an exact $\G_2$-structure on $\mfn_{28} \rtimes_{f_{a,b_1,b_2}}\bR$ and $\mfn_{28} \rtimes_{h_b}\bR$, respectively. This explains the strange `normalisation' of the endomorphisms in Theorem \ref{th:exactG2n28}.
\end{remark}
Looking for exact $\G_2$-structures of special torsion, we obtain:
\begin{theorem}\label{th:exactG2withspecialtorsion}
	Let $\mfg$ be a seven-dimensional almost nilpotent Lie algebra with codimension-one nilpotent ideal
	isomorphic to $\mfn_{28}$ which admits an exact $\G_2$-structure. Then:
	
	\begin{itemize}
		\item[(a)]
		$\mfg$ admits an exact $\G_2$-structure with special torsion of negative type.
		\item[(b)]
		$\mfg$ admits an exact $\G_2$-structure with special torsion of positive type
		if and only if $\mfg\not\cong \mfn_{28}\rtimes_{f_{-\qart,b,b}}\bR$ for all $b\in \bR$.
	\end{itemize}
\end{theorem}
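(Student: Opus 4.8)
The plan is to reduce both statements to a question about the eigenvalues of the torsion two-form, which for these Lie algebras lives in $\mathfrak{su}(3)$, and then to decide when the relevant eigenvalue configurations can be realised by an exact $\G_2$-structure.

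First I would record the torsion form. If $\varphi=\omega\wedge e^7+\rho$ is an exact (hence closed) $\G_2$-structure on $\mfg\cong\mfn_{28}\rtimes_f\bR$ with $e_7\perp_{g_\varphi}\mfh$ and $\mfh=\mfn_{28}$ the nilradical, then, exactly as in the computation preceding Theorem \ref{th:eigenform} (using $d_\mfh\omega^2=0$, which holds since $\omega=\epsilon\lambda^2\omega_0$ and $\omega_0^2$ is closed), the torsion two-form $\tau=\delta_\varphi\varphi$ has no $e^7$-component and is a primitive $(1,1)$-form on $\mfh$, determined by $\tau\wedge\omega=\omega\wedge f.\omega-d_\mfh\hat\rho$. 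Via the metric, $\tau$ corresponds to a traceless Hermitian endomorphism of $(\mfh,J_\rho,g)\cong\bC^3$, with real eigenvalues $\mu_1,\mu_2,\mu_3$ summing to zero. A short computation in a unitary coframe gives $\tau^3=6\,\mu_1\mu_2\mu_3\,\mathrm{vol}$ and $|\tau|^2_\varphi=\mu_1^2+\mu_2^2+\mu_3^2$, so that, with $q=\mu_1\mu_2+\mu_2\mu_3+\mu_3\mu_1$ and $r=\mu_1\mu_2\mu_3$,
\[
|\tau^3|^2_\varphi=36\,r^2,\qquad \tfrac23|\tau|^6_\varphi=-\tfrac{16}{3}\,q^3 .
\]
Hence $\varphi$ has special torsion of positive type iff $r=0$, i.e. one eigenvalue vanishes, and of negative type iff $27r^2+4q^3=0$, i.e. the cubic $t^3+qt-r$ has a repeated root, i.e. two of the $\mu_j$ coincide. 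This converts parts (a) and (b) into statements about which eigenvalue multisets occur.

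Next I would set up an explicit parametrised family and read off $(\mu_1,\mu_2,\mu_3)$. By Lemma \ref{le:canonicalformsonn28} one may take $(\omega,\rho)=(\epsilon\lambda^2\omega_0,\lambda^3\rho_0)$ after an automorphism; since $J_\rho$ preserves the centre $\spa{e_5,e_6}=[\mfh,\mfh]$ (Lemma \ref{le:SL3Clinearlydependent}), and since by Lemma \ref{le:implicationsofeq3} the admissible $f$ commute with $J_0$, every relevant $f$ is a block endomorphism for $\mfh=V\oplus W$, $V=\spa{e_1,\dots,e_4}$, $W=\spa{e_5,e_6}$. Relative to this splitting $\tau$ becomes a Hermitian matrix $\bigl(\begin{smallmatrix}P & v\\ v^* & d\end{smallmatrix}\bigr)$, in which the $V$-block $P$ and the scalar $d$ are fixed by the normal-form data of $f$ (Theorem \ref{th:exactG2n28}) together with the constant $d_\mfh\hat\rho$, while the off-diagonal vector $v\in\bC^2$ is adjustable: replacing the transverse vector $e_7$ by $e_7+h$, $h\in\mfh$, changes $f$ by an inner derivation $\ad_\mfh(h)$, which commutes with $J_0$ and feeds exactly into the $V$-$W$ off-diagonal part of $\tau$ while leaving $P$ and $d$ fixed. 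Thus the totality of exact $\G_2$-structures on a fixed $\mfg$ sweeps out a whole family of off-diagonal terms $v$ over a fixed diagonal block, and I would compute the characteristic polynomial of $\bigl(\begin{smallmatrix}P & v\\ v^* & d\end{smallmatrix}\bigr)$ as a function of $v$.

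To finish, for part (b) the condition $r=\det\tau=0$ becomes $\det\bigl(\begin{smallmatrix}P & v\\ v^* & d\end{smallmatrix}\bigr)=0$, which, when $P$ is invertible, reads $d=v^*P^{-1}v$; this is solvable for some $v$ exactly when $d$ lies in the range of the quadratic form $v\mapsto v^*P^{-1}v$, which is all of $\bR$ unless $P^{-1}$ is definite, in which case only one sign of $d$ is attainable. I expect the explicit evaluation of $P$ and $d$ from the normal forms $f_{a,b_1,b_2}$ and $h_b$ to show that this sign obstruction occurs precisely for $\mfg\cong\mfn_{28}\rtimes_{f_{-1/4,b,b}}\bR$, the case in which $f|_V$ is a complex scalar, so that $P$ is a definite multiple of the identity of the wrong sign and no admissible $v$ exists; in all other cases $P^{-1}$ is indefinite or the favourable sign occurs, yielding a valid $v$ and hence positive type. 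For part (a) one instead solves the single real equation $27r^2+4q^3=0$; since $v$ supplies four real parameters, I would exhibit, for every admissible $f$, an explicit $v$ making two eigenvalues of $\bigl(\begin{smallmatrix}P & v\\ v^* & d\end{smallmatrix}\bigr)$ coincide, giving negative type in all cases.

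The hard part will be the explicit, sign-sensitive computation of the blocks $P$ and $d$ from the normal forms, and in particular pinning down why the obstruction singles out $f_{-1/4,b,b}$ rather than merely the real-part condition $-\tfrac14$; I expect the imaginary parts $b_1,b_2$ to enter either through the precise shape of $P$ or through the exact range of the off-diagonal term $v$, and verifying that $v$ indeed ranges over all of $\bC^2$ as the inner-derivation parameter varies is a necessary supporting step. As the authors emphasise, this is at the edge of what is feasible by hand, so I would organise the endgame by first treating the diagonalisable derivations $f_{a,b_1,b_2}$ and then the Jordan-type $h_b$ separately, in each case reducing to the solvability of the scalar equation $d=v^*P^{-1}v$.
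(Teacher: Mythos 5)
Your reduction of the two special-torsion conditions to the eigenvalues $\mu_1,\mu_2,\mu_3$ of the traceless Hermitian endomorphism associated with $\tau$ (positive type iff $\mu_1\mu_2\mu_3=0$, negative type iff two eigenvalues coincide) is correct and is a clean repackaging of what the paper computes, and your treatment of the obstruction in (b) is essentially the paper's argument for $\mfn_{28}\rtimes_{f_{-\qart,b,b}}\bR$ (there $f|_V$ is scalar, $P$ is a negative multiple of $I_2$, $d=3$, and $\det\tau>0$ for every off-diagonal $v$). But the deformation family you propose to sweep out is generated by the wrong mechanism and, more importantly, is too small. On the first point: the inner derivations $\ad(h)$ of $\mfn_{28}$ \emph{anti}commute with $J_0$ (check $\ad(e_1)$ on $e_3,e_4$ using $de^5=e^{13}+e^{42}$, $de^6=e^{14}+e^{23}$ and $J_0e_5=-e_6$), so by Lemma \ref{le:implicationsofeq3} adding $\ad(h)$ to an admissible $f$ destroys admissibility. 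The $J_0$-commuting $V$--$W$ perturbations that are actually available arise from conjugating $f$ by lower-triangular automorphisms of $\mfn_{28}$, which can produce an arbitrary $B$-block because $A-(\trC A)I_2$ is invertible; this is repairable but is not the step you describe.

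The fatal issue is the claim that $P$ and $d$ are ``fixed by the normal-form data'' with only $v$ adjustable. For $f=f_{a,b_1,b_2}$ the unperturbed torsion is $(2+2a)e^{12}-(2a-1)e^{34}+3e^{56}$, so relative to $\omega_0=-e^{12}-e^{34}+e^{56}$ the diagonal Hermitian entries are $p_1=-2-2a$, $p_2=2a-1$, $d=3$; for every $a\in\left[-\tfrac14,\tfrac12\right)$ both $p_1,p_2$ are negative, whence
\begin{equation*}
\det\tau=p_1p_2d-p_1|v_2|^2-p_2|v_1|^2=3p_1p_2+|p_1|\,|v_2|^2+|p_2|\,|v_1|^2>0
\end{equation*}
for \emph{every} $V$--$W$ perturbation $v\in\bC^2$. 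The same happens for $h_b$ (where the $V$-block eigenvalues are $-\tfrac32\pm1$, still both negative). So your scheme cannot produce the positive-type structures whose existence the theorem asserts on the whole range $a\in\left[-\tfrac14,\tfrac12\right)$ (for $b_1\neq b_2$ when $a=-\tfrac14$) and on $\mfn_{28}\rtimes_{h_b}\bR$; only the range $a>\tfrac12$, where $p_2>0$, is reachable by $v$ alone. The paper's proof needs a second deformation living \emph{inside} the $V$-block: the freely rescalable off-diagonal entry $c$ of $g_{a,b_1,b_2,c}$ (the scale of the nilpotent part of the Jordan form, or of the conjugating matrix, is not an isomorphism invariant), which shifts the $V$-eigenvalues to $-\tfrac12(p_1+p_2)\cdot\tfrac12\pm$ a term involving $c$ and makes one of them vanish when $4a^2+2a-2+c^2=0$. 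Your framework omits this parameter entirely. Finally, you never verify that each perturbed $f$ still satisfies the exactness equations with the standard $(\omega_0,\rho_0)$ — the explicit primitives $\nu$ in the paper have denominators such as $2a$, $2a+1$ and $(a+1)^2+(b_1+2b_2)^2$, so the existence of a suitable $\nu$ for every member of the family is a genuine step, not a formality.
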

\begin{proof}
By Theorem \ref{th:exactG2n28}, we may assume that $\mfg=\mfn_{28}\rtimes_f \bR$ with
either $f=f_{a,b_1,b_2}$ for certain $a\in \left[-\tfrac14,\infty\right)$, $b_1,b_2\in \bR$
or $f=h_b$ for some $b\in \bR$.
We divide the proof into four different parts:
\begin{itemize}
	\item[(I)]
We first show that $\mfg=\mfn_{28}\rtimes_{f_{a,b_1,b_2}} \bR$ for $a,b_1,b_2\in \bigl(\left[-\tfrac{1}{4},\tfrac{1}{2}\right]\setminus \{0\}\bigr)\times \bR^2$ with $a\neq -\tfrac{1}{4}$ or
$b_1\neq b_2$ and $\mfn_{28}\rtimes_{h_b} \bR$ for $b\in \bR$ admits an exact $\G_2$-structure
with special torsion of positive type.

For this, note that under the assumptions on $a,b_1,b_2$, the Lie algebra $\mfn_{28}\rtimes_{f_{a,b_1,b_2}}\bR$ is isomorphic to $\mfn_{28}\rtimes_{g} \bR$ with 
		\begin{equation*}
		g=g_{a,b_1,b_2,c}=\left(\begin{smallmatrix} a+i b_1 & c &  \\  & -\tfrac{1}{2}-a+i b_2 &  \\ &  & -\tfrac{1}{2} + i(b_1+b_2) \end{smallmatrix}\right)
	    \end{equation*}
	   If $a= -\tfrac{1}{4}$ and $b_1=b_2=:b$, then, for any $c\in \bR\setminus \{0\}$, we have $\mfn_{28}\rtimes_{g_{-\qart,b,b,c}} \bR \cong \mfn_{28}\rtimes_{h_b} \bR$.
   
   So we are looking for exact $\G_2$-structures with special torsion of positive type on $\mfn_{28}\rtimes_{g} \bR$. For this, note that $a\neq 0$ and $2a+1\neq 0$ by assumption. Thus,
  \begin{equation*}
  \begin{split}
  \nu&=\frac{1}{2a} e^{12}+\frac{2c^2-4a(b_1-b_2)^2+1)}{a(4(b_1-b_2)^2+1)(2a+1)} e^{34}+e^{56}-\frac{2(b_1-b_2)c}{a(4(b_1-b_2)^2+1)} (e^{13}+e^{24})\\
  &+\frac{c}{a(4(b_1-b_2)^2+1)} (e^{14}-e^{23})\in \Lambda^2 \mfn_{28}^*
  \end{split}
  \end{equation*}
  is well-defined and one checks that $d\nu=\rho_0=e^{136}-e^{246}-e^{145}-e^{235}$
  and $g.\nu=\omega_0=-e^{12}-e^{34}+e^{56}$. Thus, the pair $(\omega,\rho)$
  gives rise to an exact $\G_2$-structure $\varphi$. As $\hat{\rho}=e^{135}-e^{146}-e^{236}-e^{245}$,
  we get
   \begin{equation*}
   \begin{split}
   d\star_{\varphi}\varphi&=d\left(\tfrac{1}{2}\omega^2+e^7\wedge \hat{\rho}\right)=
e^7\wedge g.\left(\tfrac{1}{2}\omega^2\right)-e^7\wedge d_{\mfn_{28}}\hat{\rho}\\
&=e^7\wedge\left(-3 e^{1234}+ (2a-1)e^{1256}-(2+2a)e^{3456}+c (e^{1456}-e^{2356})\right)
   \end{split}
   \end{equation*}
due to $d(\omega^2)=0$. Hence, the torsion two-form $\tau$ is given by
\begin{equation*}
\tau=-\star_{\varphi}d\star_{\varphi} \varphi=(2+2 a) e^{12}-(2a-1)e^{34}+c (e^{14}-e^{23})+3 e^{56}.
\end{equation*}
Now the exact $\G_2$-structure $\varphi$ has special torsion of positive type if and only if $\tau^3=0$,
which is equivalent to $((2+2 a) e^{12}-(2a-1)e^{34}+c (e^{14}-e^{23}))^2=0$, and so to
\begin{equation*}
0=(2+2a)(2a-1)+c^2=4a^2+2a-2+c^2.
\end{equation*}
Here, $a$ is fixed and we are searching for a solution of this equation for $c$, which is possible if $4a^2+2a-2\leq 0$, i.e.\ if $a\in \left[-\tfrac{1}{4},\tfrac{1}{2}\right]$. Note that for $a=-\tfrac{1}{4}$, we have $c=\pm \tfrac{3}{2}\neq 0$ and so $\mfn_{28}\rtimes_{h_b}\bR$ admits an exact $\G_2$-structure with special torsion of positive type for any $b\in \bR$.
\item[(II)]
We show now that $(\omega_0,\rho_0)$ defines also an exact $\G_2$-structure on $\mfn_{28}\rtimes_{g_{-\qart,b,b,c}} \bR \cong \mfn_{28}\rtimes_{h_b} \bR$ with special torsion of negative type for a suitable chosen $c\in \bR\setminus \{0\}$.

The computations in (I) show that $\tau=\tfrac{3}{2} (e^{12}+e^{34})+c (e^{14}-e^{23})+3 e^{56}$. Hence, $\varphi$ has special torsion of negative type if and only if $\tfrac{2}{3}\left|\tau\right|_{\varphi}^6=\left|\tau^3\right|_{\varphi}^2$, which here is equivalent to
\begin{equation*}
\tfrac{2}{3} (\tfrac{27}{2}+2c^2)^3=\left(18\left(\tfrac{9}{4}-c^2\right)\right)^2 \quad \Longleftrightarrow\quad c^2\left(\tfrac{16}{3}c^4-216c^2+2187\right)=0,
\end{equation*}
i.e.\ to $c=0$ or $c=\pm \tfrac{9}{2}$. Thus, for $c=\tfrac{9}{2}$, we get an exact $\G_2$-structure with special torsion of negative type on $\mfn_{28}\rtimes_{g_{-\qart,b,b,\tfrac{9}{2}}} \bR\cong \mfn_{28}\rtimes_{h_b}\bR$.
\item[(III)]
Next, we show that $\mfn_{28}\rtimes_{f_{a,b_1,b_2}}\bR$ admits an exact $\G_2$-structure with special torsion of positive type if $(a,b_1,b_2)\in \left(\tfrac{1}{2},\infty\right)\times \bR^2$ and that it admits an exact $\G_2$-structure with special torsion of negative type for any possible values of $(a,b_1,b_2)$, i.e.\ for any $(a,b_1,b_2)\in \Bigl(\left[\tfrac{1}{4},\infty\right)\setminus \{0\}\Bigr)\times \bR^2$.

For this, we note that $\mfn_{28}\rtimes_{f_{a,b_1,b_2}}\bR$ is isomorphic to $\mfn_{28}\rtimes_h\bR$ with
	\begin{equation*}
h:=h_{a,b_1,b_2,r}:=\left(\begin{smallmatrix} a & -b_1 & & & & \\
                                  b_1 & a & & & &  \\
                                  & & -\tfrac{1}{2}-a & -b_2 & & \\
                                   & & b_2 & -\tfrac{1}{2}-a & & \\
                                    & & r & & -\tfrac{1}{2} & -(b_1+b_2) \\
                                    & & & -r & b_1+b_2 & -\tfrac{1}{2}
                              \end{smallmatrix}\right)
\end{equation*}
for any $r\in \bR$. The minus sign occuring before one of the $r$s is due to $(e_1,-e_2,e_3,-e_4,e_6,-e_5)$ being a complex basis, i.e.\ due to the shift in the order of $e_5$ and $e_6$.

We have $d\nu=\rho_0$ and $h.\nu=\omega_0$ for
\begin{equation*}
\begin{split}
\nu&=\tfrac{1}{2 a}e^{12}-\tfrac{(b_1+2b_2)^2+(2 r^2+a+1)(a+1)}{(2a+1)((a+1)^2+(b_1+2b_2)^2)}e^{34}+e^{56}
+\tfrac{r (b_1+2b_2)}{(a+1)^2+(b_1+2b_2)^2}\left(e^{35}-e^{46}\right)\\
&+\tfrac{r(1+a)}{(a+1)^2+(b_1+2b_2)^2}\left(e^{36}+e^{45}\right).
\end{split}
\end{equation*}
Hence, the pair $(\omega_0,\rho_0)$ defines an exact $\G_2$-structure $\varphi$ on $\mfn_{28}\rtimes_h\bR$ for any value of $r\in \bR$. Moreover, we have
\begin{equation*}
\begin{split}
d\star_{\varphi}\varphi&=e^7\wedge\left(h.\left(\tfrac{1}{2}\omega^2\right)-d_{\mfn_{28}}\hat{\rho}\right)\\
&=e^7\wedge\left(-3 e^{1234}+ (2a-1)e^{1256}-(2+2a)e^{3456}+r (e^{1236}+e^{1245})\right)
\end{split}
\end{equation*}
and so the torsion two-form $\tau$ is given by
\begin{equation*}
\tau=-\star_{\varphi}d\star_{\varphi} \varphi=(2+2 a) e^{12}-(2a-1)e^{34}-r (e^{36}+e^{45})+3 e^{56}.
\end{equation*}
Hence,
\begin{equation*}
\tau^3=-12(1+a) (6a-3-r^2) e^{123456}
\end{equation*}
and $\tau^3=0$, i.e.\ $\varphi$ has special torsion of positive type, if and only if $r^2=6a-3$. But we assumed $a>\tfrac{1}{2}$ and so have $6a-3>0$ and, consequently, $\varphi$ has special torsion of positive type for $r=\sqrt{6a-3}\in \bR$.

Moreover, $\varphi$ has special torsion of negative type if and only if
\begin{equation*}
\tfrac{2}{3}((2+2a)^2+(2a-1)^2+2r^2+9)^3=\left|\tau\right|_{\varphi}^6=\left|\tau^3\right|_{\varphi}^2=\bigl(12(1+a) (6a-3-r^2 )\bigr)^2.
\end{equation*}
Bringing both terms on one side and factorising gives
\begin{equation*}
\tfrac{16}{3}\left((a-2)^2+r^2\right)\cdot \left(8a^2+22a+5-r^2\right)^2=0.
\end{equation*}
So one may find some $r\in \bR$ such that $\varphi$ has special torsion of negative type if $(2a+5)(4a+1)=8a^2+22a+5\geq 0$. But this is the case if $a\geq -\tfrac{1}{4}$ and so $\mfn_{28}\rtimes_{f_{a,b_1,b_2}}\bR$ admits an exact $\G_2$-structure with special torsion of negative type for any possible values of $(a,b_1,b_2)$.
\item[(IV)]
Finally, we need to show that for any $b\in \bR$, the Lie algebra $\mfn_{28}\rtimes_{f_{-\qart,b,b}}\bR$ does not admit an exact 
$\G_2$-structure with special torsion of positive type.

For this, let $(\omega,\rho)$ be a half-flat $\SU(3)$-structure which determines an exact $\G_2$-structure $\varphi$
on $\mfn_{28}\rtimes_{f_{-\qart,b,b}}\bR$ and let $\nu\in \Lambda^2 \mfn_{28}^*$ and $\alpha\in \mfn_{28}$ be such that
\eqref{eq:3} holds. By Lemma \ref{le:canonicalformsonn28}, we may assume that
\begin{equation*}
\begin{split}
\omega&=\epsilon\lambda^2\omega_0=\epsilon\lambda^2 (-e^{12}-e^{34}+e^{56}),\quad \rho=\lambda^3 \rho_0=\lambda^3(e^{136}-e^{246}-e^{145}-e^{235})
\end{split}
\end{equation*}
for some $\lambda\in \bR\setminus \{0\}$, up to an automorphism $F$ of $\mfn_{28}$, i.e.\ $(\omega,\rho)$ are of this 
form on $\mfn_{28}\rtimes_{F f_{-\qart,b,b}F^{-1}}\bR$. Now one computes that $f:=F f_{-\qart,b,b}F^{-1}$ is of the form
\begin{equation*}
f=\begin{pmatrix} \left(-\frac14+ib\right) I_2 &  \\ B & -\tfrac{1}{2}+2i b \end{pmatrix}
\end{equation*}
for some $B\in \bR^{2\times 4}$. By Lemma \ref{le:implicationsofeq3}, we have $[f,J_0]=0$, which amounts to $B$ being of the form
\begin{equation*}
B=\begin{pmatrix} a_1 & a_2 & a_3 & a_4 \\ a_2 & -a_1 & a_4 & -a_3 \end{pmatrix}
\end{equation*}
for certain $a_1,a_2,a_3,a_4\in \bR$. Moreover, by Lemma \ref{le:implicationsofeq3}, we have $f.\nu^{1,1}=\omega$ and
\begin{equation*}
\nu^{1,1}=\lambda^3 e^{56}+e^5\wedge \beta+e^6\wedge J^*\beta+c_1 e^{12}+c_2 e^{34} 
\end{equation*}
for certain $\beta\in \spa{e^1,e^2,e^3,e^4}$ and $c_1,c_2\in \bR$ by Lemma \ref{le:canonicalformsonn28}. Thus,
\begin{equation*}
\epsilon\lambda^2 (-e^{12}-e^{34}+e^{56})=\omega=f.\nu^{1,1}=\lambda^3 e^{56}+e^5\wedge \gamma+e^6\wedge J^*\gamma+ \tfrac{c_1}{2}e^{12}+\tfrac{c_2}{2} e^{34}
\end{equation*}
for some $\gamma\in \spa{e^1,e^2,e^3,e^4}$, which implies, in particular, $\lambda=\epsilon$, i.e.\ $\omega=\epsilon\omega_0$ and $\rho=\epsilon\rho_0$. Since for $\epsilon=-1$, the induced orientation is the opposite of that for $\epsilon=1$, we always have $\hat{\rho}=e^{135}-e^{146}-e^{236}-e^{245}$. Thus, one cpmputes
\begin{equation*}
\end{equation*}
\begin{equation*}
\begin{split}
d\star_{\varphi}\varphi=& e^7\wedge \left(f.\left(\tfrac{1}{2}\omega^2\right)-d_{\mfn_{28}}\hat{\rho}\right)\\
=& e^7\wedge\left(-3 e^{1234}-\tfrac{3}{2}e^{1256}-\tfrac{3}{2}e^{3456}+a_1 \left(e^{2345}+ e^{1346}\right)+a_2 \left(e^{2346}- e^{1345}\right)\right.\\[-3pt]
&\left.+a_3 \left(e^{1245}+ e^{1236}\right)+a_4 \left(e^{1246}- e^{1235}\right)\right)
\end{split}
\end{equation*}
independently of $\epsilon$. Hence,
\begin{equation*}
\begin{split}
\tau=&-\star_{\varphi}d\star_{\varphi} \varphi\\
&=\epsilon\left(\tfrac{3}{2} \left(e^{12}+e^{34}\right)+ 3 e^{56}-a_1 \left(e^{16}+e^{25}\right)+a_2 \left(e^{15}-e^{26}\right)
-a_3 \left(e^{36}+e^{45}\right)+a_4 \left(e^{35}-e^{46}\right)\right),
\end{split}
\end{equation*}
and so
\begin{equation*}
\tau^3=9\epsilon \left(\tfrac{9}{2}+a_1^2+a_2^2+a_3^2+a_4^2\right)e^{123456}\neq 0,
\end{equation*}
i.e.\ $\varphi$ does not have special torsion of positive type.
\end{itemize}
\end{proof}
Finally in this section, we show that a Lie algebra of the form $\mfg=\mfn_{28}\rtimes_f \bR$ cannot admit a closed $\G_2$-eigenform:
\begin{theorem}\label{th:noeigenformsn28}
Let $\mfg$ be a seven-dimensional almost nilpotent Lie algebra with codimension-one nilpotent ideal isomorphic to $\mfn_{28}$. Then
$\mfg$ does not admit a closed $\G_2$-eigenform.
\end{theorem}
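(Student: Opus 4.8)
The plan is to run the same reduction-then-computation used for $\mfn_9$, but to exploit a hard-Lefschetz shortcut that collapses the second eigenform equation. Suppose $\mfg\cong\mfn_{28}\rtimes_f\bR$ carries a closed $\G_2$-eigenform. By Theorem \ref{th:eigenform} this produces a half-flat $\SU(3)$-structure $(\omega,\rho)$ on $\mfn_{28}$ together with a primitive $(1,1)$-form $\nu$ satisfying $d\nu=\rho$, the first equation $f.\nu=\omega$, and the second equation $\omega\wedge f.\omega-d\hat\rho=\omega\wedge\nu$. By Lemma \ref{le:canonicalformsonn28} I may take $(\omega,\rho,\nu)$ in the canonical form \eqref{eq:canonicalformsonn28two}, depending on $\epsilon,\lambda,c$ and $\beta\in\mfn_{28}^*$. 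Since $f.\nu=\omega$ is exactly \eqref{eq:3} with $\alpha=0$ (an eigenform forces $\alpha=0$ in the reduction preceding Theorem \ref{th:eigenform}), Lemma \ref{le:implicationsofeq3} applies and gives $[f,J_0]=0$; hence $f$ is complex-linear and I write $f=\left(\begin{smallmatrix}A&0\\B&\trC A\end{smallmatrix}\right)$ with $A\in\bC^{2\times2}$ and $B$ complex-linear.

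The key observation is that $d\hat\rho$ is a \emph{nonzero} multiple of $e^{1234}\in\Lambda^4V^*$: this is a direct computation, since every monomial of $\hat\rho$ carries exactly one of $e^5,e^6$, so its differential lands in $\Lambda^4V^*=\bR e^{1234}$. Because $\omega$ is symplectic on the six-dimensional $\mfn_{28}$, wedging with $\omega$ is an isomorphism $\Lambda^2\mfn_{28}^*\to\Lambda^4\mfn_{28}^*$, so the second equation is equivalent to the pointwise identity $f.\omega=\nu+k\sigma$, where $\sigma:=e^{12}+e^{34}+e^{56}$ and $k$ is an explicit constant. Comparing the $e^{56}$-components of $f.\nu=\omega$ and of $f.\omega=\nu+k\sigma$ immediately pins down $\re(\trC A)$ and $\tr(f)$ and forces $\lambda^2=3$; the Leibniz identity $3\,\omega^2\wedge f.\omega=f.(\omega^3)=-\tr(f)\,\omega^3$ shows these values are mutually consistent only for the correct orientation, so no contradiction appears yet but the scale is fixed.

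The remaining and decisive step is to feed the mixed $V$-$W$ and the $V$-$V$ components of the two identities, together with the primitivity constraint $\omega^2\wedge\nu=0$ (and, if convenient, the derived normalization $|\nu|^2=7$), into a polynomial system in the surviving parameters $\beta$, $c$ and the entries of $A,B$. After normalizing $\beta$ and $A$ using the residual $\SU(2)$ and torus automorphisms fixing $(\omega_0,\rho_0)$, this system is small, and I expect it to be inconsistent over $\bR$, which would finish the proof. I anticipate this final elimination to be the main obstacle: although the Lefschetz reduction and the fixed values $\lambda^2=3$ and $\re(\trC A)$ cut the problem down substantially, the coupling of $f.\nu=\omega$ and $f.\omega=\nu+k\sigma$ across the $V$-$W$ and $V$-$V$ blocks is genuinely multivariable, and—exactly as in the $\mfn_9$ computation—it is prudent to confirm the inconsistency with a computer algebra system.
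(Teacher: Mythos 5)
Your setup coincides with the paper's: the same reduction via Theorem \ref{th:eigenform} and Lemma \ref{le:canonicalformsonn28}, and the same key observation that $d\hat\rho$ is a nonzero multiple of $e^{1234}$, so that injectivity of wedging with $\omega$ converts the second eigenform equation into the pointwise identity $f.\omega-\nu=k\,(e^{12}+e^{34}+e^{56})$ with $k$ an explicit multiple of $\lambda$. Up to that point you are on exactly the paper's track (your appeal to Lemma \ref{le:implicationsofeq3} to get $[f,J_0]=0$ is legitimate but not actually needed; the block form of $f\in\mathrm{Der}(\mfn_{28})$ from \eqref{eq:derivationsn28} already suffices).

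The genuine gap is that you do not finish the proof: your final step is an assertion that a multivariable polynomial system in $\beta$, $c$ and the entries of $A,B$ ``is expected to be inconsistent'' and should be ``confirmed with a computer algebra system.'' That is a conjecture, not an argument, and it is precisely the decisive step. Moreover, the machinery you propose to deploy is unnecessary: the contradiction is already contained in the four scalar equations you essentially have in hand. Evaluating $f.\omega-\nu=-2\lambda(e^{12}+e^{34}+e^{56})$ on $(e_1,e_2)$ and $(e_3,e_4)$ and adding eliminates $c$ and gives $2\,\re(\trC A)\lambda^2-\lambda^3=-4\lambda$; evaluating it on $(e_5,e_6)$ gives $-2\,\re(\trC A)\lambda^2-\lambda^3=-2\lambda$, and adding these two forces $\lambda^2=3$. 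On the other hand, evaluating the \emph{first} equation $f.\nu=\omega$ on $(e_5,e_6)$ gives $-2\,\re(\trC A)\lambda^3=\lambda^2$, which combined with the displayed relation $2\,\re(\trC A)\lambda^2-\lambda^3=-4\lambda$ forces $\lambda^2=5$ --- a contradiction involving only the two parameters $\lambda$ and $\re(\trC A)$, with $\beta$, $B$ and the primitivity constraint playing no role. You extracted only the first of these two incompatible values of $\lambda^2$ and then turned to a large elimination problem instead of pairing the $e^{12}+e^{34}$ components against the $e^{56}$ components of the two equations. As written, the proposal is therefore incomplete.
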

\begin{proof}
We assume the contrary. Then, by Lemma \ref{le:canonicalformsonn28}, $\lambda\in \bR\setminus \{0\}$, $\epsilon\in \{-1,1\}$ such that $\omega=\epsilon \lambda \omega_0$, $\rho=\lambda^3\rho_0$
and $\nu \in [\Lambda^{1,1}_0 \mfn_{28}^*]$ is as in \eqref{eq:canonicalformsonn28two}, i.e.
\begin{equation*}
\nu=c e^{12}+(\lambda^3-c) e^{34}+\lambda^3  e^{56}+e^5\wedge \beta+e^6\wedge J_0^*\beta
\end{equation*}
for certain $c\in \bR$ and $\beta\in V^*$. Moreover, we may assume that
\begin{equation*}
f.\nu= \omega,\qquad \omega\wedge (f.\omega-\nu)=d\hat{\rho}.
\end{equation*}
for some $f\in \mathrm{Der}(\mfn_{28})$. Since then also $(-\omega,\rho,\nu,-f)$ fulfills all necessary equations, and so defines a closed $\G_2$-eigenform, we may assume that $\epsilon=1$.

But then one computes
\begin{equation*}
\begin{split}
d\hat{\rho}&=\lambda^3 d\hat{\rho}_0=4 \lambda^3\, e^{1234}=\lambda^2 (-e^{12}-e^{34}+e^{56})\wedge -2\lambda (e^{12}+e^{34}+e^{56})\\
&=\omega\wedge -2\lambda (e^{12}+e^{34}+e^{56}).
\end{split}
\end{equation*}
Since wedging with $\omega$ is an isomorphism from $\Lambda^2\mfh^*$ to $\Lambda^4 \mfh^*$, the latte equation implies
\begin{equation*}
f.\omega-\nu=-2\lambda (e^{12}+e^{34}+e^{56})
\end{equation*}
By \eqref{eq:derivationsn28}, we know that
\begin{equation*}
f=\begin{pmatrix} A & 0 \\ B & \trC A \end{pmatrix}
\end{equation*}
for some $A=(a_{ij})_{i,j}\in \bC^{2\times 2}$ and some $B\in \bR^{2\times 4}$. Thus, inserting $(e_1,e_2)$ into the equality $f.\omega-\nu=-2\lambda (e^{12}+e^{34}+e^{56})$ yields
\begin{equation*}
2\,\re(a_{11})\lambda^2-c=-2\,\re(a_{11})\omega(e_1,e_2)-\nu(e_1,e_2)=(f.\omega-\nu)(e_1,e_2)=-2\lambda.
\end{equation*}
Similarly, we obtain
\begin{equation*}
2\,\re(a_{22})\lambda^2-(\lambda^3-c)=(f.\omega-\nu)(e_3,e_4)=-2\lambda.
\end{equation*}
by inserting $(e_3,e_4)$. Adding these two equations yields
\begin{equation}\label{eq:auxiliary}
2\,\re(\trC A)\lambda^2-\lambda^3=-4\lambda.
\end{equation}
Moreover, inserting $(e_5,e_6)$, we do get
\begin{equation*}
-2\,\re(\trC A)\lambda^2-\lambda^3=(f.\omega-\nu)(e_5,e_6)=-2\lambda.
\end{equation*}
Adding \eqref{eq:auxiliary} to this equation, we obtain
$-2\lambda^3=-6\lambda$, and so, since $\lambda\neq 0$, that $\lambda^2=3$.

However, we also get
\begin{equation*}
-2\,\re(\trC A)\lambda^3=-2\re(\trC A)\nu(e_5,e_6)=f.\nu(e_5,e_6)=\omega(e_5,e_6)=\lambda^2,
\end{equation*}
i.e.\ $2\re(\trC A)\lambda^2=\lambda$, which, together with \eqref{eq:auxiliary}, yields
$\lambda-\lambda^3=-4\lambda$, i.e.\ $\lambda^2=5$, a contradiction.

Thus, $\mfg$ does not admit a closed $\G_2$-eigenform.
\end{proof}
\medbreak\noindent \textsc{Acknowledgements.} The first author was
supported by a \emph{Forschungs\-stipendium} (FR 3473/2-1) from
the Deutsche Forschungsgemeinschaft (DFG).

\end{document}